\titlespacing*{\section}{0pt}{12pt plus 4pt minus 2pt}{2pt plus 2pt minus 2pt}
\titlespacing*{\subsection}{0pt}{12pt plus 4pt minus 2pt}{2pt plus 2pt minus 2pt}
\titlespacing*\subsubsection{0pt}{12pt plus 4pt minus 2pt}{2pt plus 2pt minus 2pt}
\titlespacing*\paragraph{0pt}{12pt plus 4pt minus 2pt}{2pt plus 2pt minus 2pt}
    \renewcommand*{\thesection}{\arabic{section}}
    \renewcommand*{\thesubsection}{\thesection.\Alph{subsection}}
    \renewcommand*{\p@subsection}{}
    \renewcommand*{\thesubsubsection}{\thesubsection.\arabic{subsubsection}}
    \renewcommand*{\p@subsubsection}{}
\newcommand{\R}{\mathbb R}
\newcommand{\N}{\mathbb N}
\newcommand{\eps}{{\varepsilon}}
\newcommand{\half}{\frac{1}{2}}
\newtheorem{theorem}{Theorem}[section]
\newtheorem{lemma}{Lemma}[section]
\newtheorem{proposition}{Proposition}[section]
\newtheorem{remark}{Remark}[section]
\newcommand{\bfsigma}{\mathbold {\sigma}}
\DeclareMathOperator{\trace}{tr}
\DeclareMathOperator*{\argmin}{argmin}
\newcommand{\parderiv}[2]{\frac{\partial #1}{\partial #2}}
\newcommand{\dm}{\ \mathrm{d}}
\newcommand{\bfa}{{\mathbold a}}
\newcommand{\bfb}{{\mathbold b}}
\newcommand{\bfd}{{\mathbold d}}
\newcommand{\bfe}{{\mathbold e}}
\newcommand{\bff}{{\mathbold f}}
\newcommand{\bfn}{{\mathbold n}}
\newcommand{\bft}{{\mathbold t}}
\newcommand{\bfx}{{\mathbold x}}
\newcommand{\bfy}{{\mathbold y}}
\newcommand{\bfA}{{\mathbold A}}
\newcommand{\bfC}{{\mathbold C}}
\newcommand{\bfF}{{\mathbold F}}
\newcommand{\bfG}{{\mathbold G}}
\newcommand{\bfI}{{\mathbold I}}
\newcommand{\bfQ}{{\mathbold Q}}
\newcommand{\bfR}{{\mathbold R}}
\newcommand{\bfT}{{\mathbold T}}
\newcommand{\bfU}{{\mathbold U}}
\newcommand{\e}{\epsilon}
\newcommand{\vareps}{\varepsilon}
\newcommand{\bfvareps}{\mathbold {\varepsilon}}
\DeclareMathOperator{\tr}{tr}
\DeclareMathOperator{\cof}{cof}
\DeclareMathOperator{\spn}{span}
\newtheorem{definition}[theorem]{Definition}
\newcommand{\Wd}{W_{\mathrm{d}}} % relaxation of W
\newcommand{\Wdlin}{W_{\mathrm{d},\mathrm{lin}}} % relaxation and linearization of W
\newcommand{\C}{\boldsymbol{\mathsf{C}}} % C: Elasticity tensor
\begin{document}

%%%%%%%%%%%%%%%%%%%%%
%%%%%%%%%%%%%%%%%%%%%
%%%%%%%%%%%%%%%%%%%%%
%%%%%%%%%%%%%%%%%%%%%

\preprint{Accepted to appear in Journal of the Mechanics and Physics of Solids (DOI: \url{https://doi.org/10.1016/j.jmps.2022.104994})}

\title{\Large{Phase-Field Finite Deformation Fracture with an \\ Effective Energy for Regularized Crack Face Contact}}

\author{Maryam Hakimzadeh}
    \email{mhakimza@andrew.cmu.edu}
    \affiliation{Department of Civil and Environmental Engineering, Carnegie Mellon University}

\author{Vaibhav Agrawal}
    \altaffiliation{Currently at Apple Corporation.}
    \affiliation{Department of Civil and Environmental Engineering, Carnegie Mellon University}

\author{Kaushik Dayal}
    \affiliation{Department of Civil and Environmental Engineering, Carnegie Mellon University}
    \affiliation{Center for Nonlinear Analysis, Department of Mathematical Sciences, Carnegie Mellon University}
    \affiliation{Department of Mechanical Engineering, Carnegie Mellon University}

\author{Carlos Mora-Corral}
    \affiliation{Departamento de Matem\'aticas, Universidad Auton\'oma de Madrid, 28049 Madrid (Spain)}

\date{\today}

%%%%%%%%%%%%%%%%%%%%%
%%%%%%%%%%%%%%%%%%%%%
%%%%%%%%%%%%%%%%%%%%%
%%%%%%%%%%%%%%%%%%%%%

\begin{abstract}
	Phase-field models are a leading approach for realistic fracture problems.
	They treat the crack as a second phase and use gradient terms to smear out the crack faces, enabling the use of standard numerical methods for simulations. 
	This regularization causes cracks to occupy a finite volume in the reference, and leads to the inability to appropriately model the closing or contacting -- without healing -- of crack faces.
	Specifically, the classical idealized crack face tractions are that the shear component is zero, and that the normal component is zero when the crack opens and identical to the intact material when the crack closes.
    Phase-field fracture models do not replicate this behavior.

	This work addresses this shortcoming by introducing an effective crack energy density that endows the regularized (finite volume) phase-field crack with the effective properties of an idealized sharp crack. 
	The approach is based on applying the QR (upper triangular) decomposition of the deformation gradient tensor in the basis of the crack, enabling a transparent identification of the crack deformation modes. 
	By then relaxing over those modes that do not cost energy, an effective energy is obtained that has the intact response when the crack faces close and zero energy when the crack faces are open.
	The effective energy is often, but not always, consistent with the classical crack-face tractions; it is shown here that there generally does not exist a stored energy that is consistent both with the classical crack-face tractions and with reproducing the intact response when the crack closes.
	
	A highlight of this approach is that it lies completely in the setting of finite deformation, enabling potential application to soft materials and other settings with large deformation or rotations.
	The model is applied to numerically study representative complex loadings, including (1) cyclic loading on a cavity in a soft solid that shows the growth and closing of cracks in complex stress states; and (2) cyclic shear that shows a complex pattern of crack branching driven by the closure of cracks.
\end{abstract}

\maketitle

%%%%%%%%%%%%%%%%%%%%%
%%%%%%%%%%%%%%%%%%%%%
%%%%%%%%%%%%%%%%%%%%%
%%%%%%%%%%%%%%%%%%%%%
\begin{figure*}[h!]
	\includegraphics[width=0.66\textwidth]{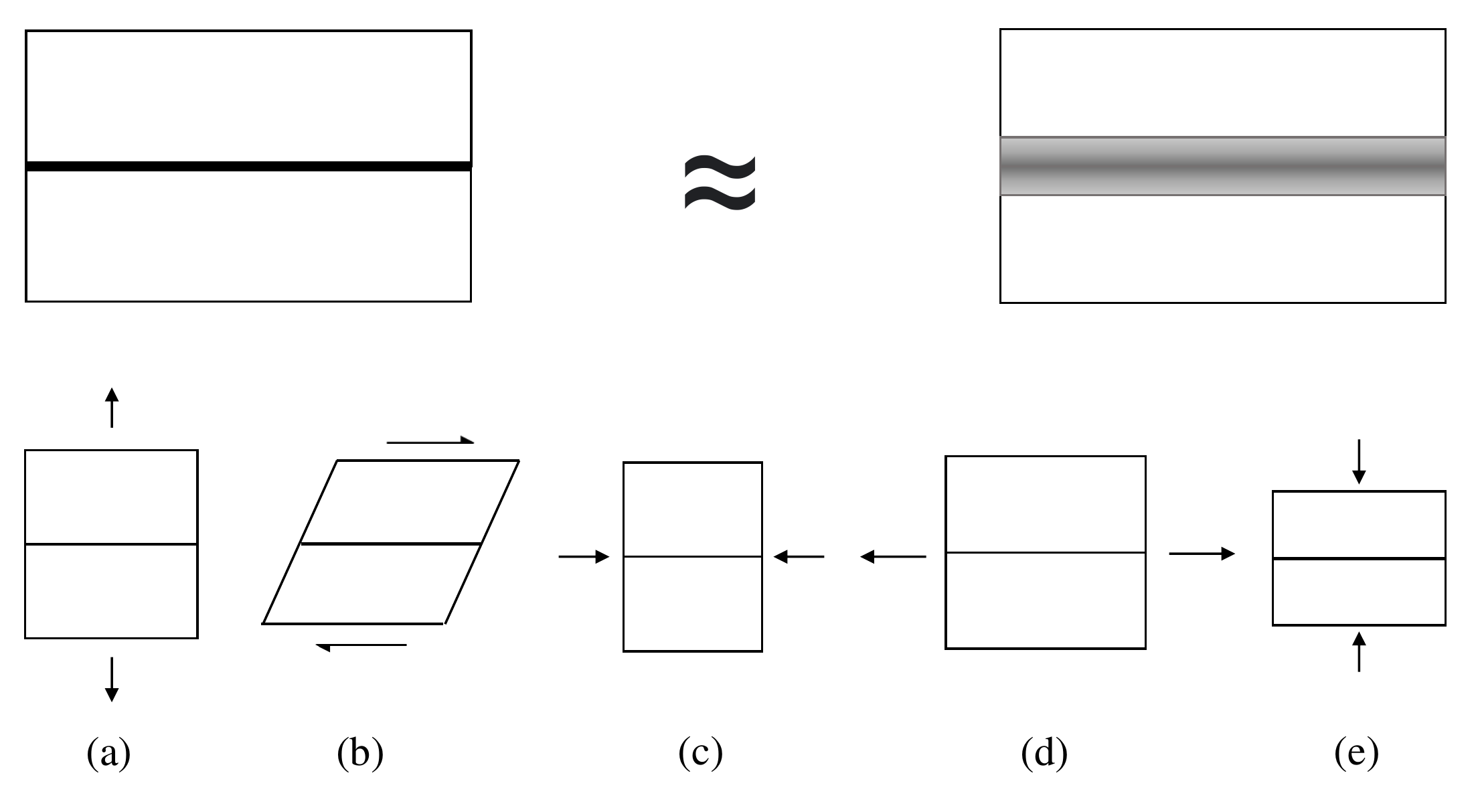}
	\caption{The goal is to formulate an effective crack volume energy such that the regularized crack volume (top right) has the same response as the sharp crack (top left).
	The QR decomposition enables us to clearly separate the deformation into distinct modes, and we then minimize over modes (a) and (b), while retaining the energy associated with modes (c), (d), and (e).}
    \label{fig:overview}
\end{figure*}

%%%%%%%%%%%%%%%%%%%%%
%%%%%%%%%%%%%%%%%%%%%
%%%%%%%%%%%%%%%%%%%%%
%%%%%%%%%%%%%%%%%%%%%

%%%%%%%%%%%%%%%%%%%%%
%%%%%%%%%%%%%%%%%%%%%
%%%%%%%%%%%%%%%%%%%%%
%%%%%%%%%%%%%%%%%%%%%
\section{Introduction}

Phase-field approaches to fracture regularize the singular crack and thereby enable easy numerical calculations for complex fracture problems.
The key idea is to introduce an additional scalar field $\phi$ and then regularize the field through the introduction of gradient terms, of the form $|\nabla\phi|^2$, in the energy.
The field $\phi$ tracks the level of damage in the domain, and the crack corresponds to regions that are completely damaged.
The regularization term $|\nabla\phi|^2$ is consequently related to the energy associated with the formation of fracture surfaces.
This regularization enables the use of standard computational methods, specifically the finite element method (FEM), to approximate the evolution of cracks in a specimen under load.

The regularization, however, causes cracks to occupy a finite volume in the reference configuration, and leads to unphysical behavior when an existing crack is subject to loads that causes the crack faces to close\footnote{
    We use ``close'' to denote crack faces that come in contact and {\em not} that the crack heals; we do not allow cracks to heal in this work.
}.
For instance, the earliest phase-field models of fracture used simply that the damaged region has zero elastic energy; however, this causes cracks in this model to grow even under compressive load as it provides a means of relieving elastic energy of all kinds, not just those that drive crack growth in real systems.
In addition, these models allow for interpenetration of the crack faces.
To address these issues, \cite{miehe-ijnme} proposed a partition of the elastic energy into tensile and compressive parts, and allowed the damaged region to sustain only the compressive part.
Related to this, \cite{amor-jmps} proposed a partition of the energy into compressive hydrostatic, tensile hydrostatic, and deviatoric parts, and allowed the damaged region to sustain only the compressive hydrostatic part.
However, this class of approaches has a key shortcoming: the energy partitioning does not consider the crack direction at all.
For instance, tension across the crack faces that drives crack growth is not distinguished from tension along the length of the crack that does not directly drive crack growth.
Appendix \ref{sec:Vaibhav-comparison} describes specific instances where this class of approaches gives incorrect stresses; in turn, this can lead to incorrect driving forces for crack growth and affect the crack-parallel T-stress.

Based on the recognition that it is essential to consider the crack orientation in defining the energy of the damaged region, \cite{strobl2015,strobl2016,agrawal-thesis,steinke2019phase,fei2020phase} proposed models, in the linear regime, for the damaged elastic energy that account for the crack orientation.
The current work builds on these approaches in that we account for the crack normal in defining the energy density of the damaged region.
Figure \ref{fig:overview} summarizes our overall approach: we aim to derive an effective crack energy such that a regularized crack volume has the same response as an idealized sharp crack.
We achieve this by separating the kinematics associated with each of the deformation modes in Figure \ref{fig:overview}, and then appropriately treating each mode.

Our technical strategy in brief is as follows.
Rather than starting with the typical polar decomposition of the deformation gradient $\bfF$, we use the QR or Gram–Schmidt multiplicative decomposition of $\bfF$ into a rotation and an upper-triangular part in the frame of the crack.
The upper triangular representation in the frame of the crack provides a transparent and direct measure of the opening or closing of the crack face, the stretch along the length of the crack, and the crack face shear deformation.
We then obtain the damaged elastic energy by minimizing the intact energy over the crack face shear -- using that the idealized crack cannot sustain shear tractions -- and minimizing it over the crack opening stretch if the crack is opening.
This strategy provides a constructive approach to obtaining the effective damaged energy given the form of the intact energy.
We demonstrate the efficacy of this approach in numerical calculations: first, of the elastic response of cracks that do not evolve; and, second, of the response and growth of cracks in complex settings that include compressive and cyclic loadings that cause crack closure.

%%%%%%%%%%%%%%%%%%%%%
%%%%%%%%%%%%%%%%%%%%%
%%%%%%%%%%%%%%%%%%%%%
%%%%%%%%%%%%%%%%%%%%%
\subsection{The Classical Phase-Field Model of Fracture}\label{subse:classical}

The starting point of phase-field modeling of brittle fracture is the influential variational model of quasistatic crack evolution due to Francfort and Marigo \cite{FrMa98}.
In its simplest form, it consists in minimizing the energy
\begin{equation}\label{eq:introEnergy}
     E[\bfy, \Gamma] = \int_{\Omega \setminus \Gamma} W(\nabla \bfy) \dm V_{\bfx} + G_c \mathcal{H}^{n-1} (\Gamma)
\end{equation}
over all admissible deformations $\bfy\colon \Omega \to \mathbb{R}^n$ and cracks $\Gamma \subset \Omega$.
Here $\Omega$ is an open set of $\mathbb{R}^n$ ($n=2,3$) representing the body in its reference configuration, $\mathcal{H}^{n-1}$ is the surface measure, $G_c$ is the toughness constant or work to fracture, and $W$ is the stored energy density of the material.
The evolution of the system is typically given by an external loading that depends on the time $t$, whose energy is added to \eqref{eq:introEnergy} and make up the total energy of the system.
The proposed evolution law postulates that at each time $t$, the pair deformation-crack $(\bfy (t), \Gamma (t))$ minimizes the total energy and that the crack set $\Gamma (t)$ is nondecreasing with $t$.
This model is inspired by Griffith's \cite{Griffith21} theory of fracture, but its main advantage is that the crack path is not specified \emph{a priori} but rather it is selected by energy minimization.

This model can be recast as a free-discontinuity problem in the same way that the Mumford--Shah \cite{MuSh89} model for image segmentation was recast as a free-discontinuity model by \cite{DeCaLe89}.
In fact, the earlier work by Ambrosio and Braides \cite{AmBr95} shows the following preliminary version of a free-discontinuity model that encompasses elastic and fracture energies, but in the static case.
As typical in free-discontinuity problems \cite{AmFuPa00}, the main idea is to unify the pair deformation-crack $(\bfy, \Gamma)$ not as a Sobolev function $\bfy$ defined in $\Omega \setminus \Gamma$ together with a crack set, but as an $SBV$ function $\bfy$ defined in $\Omega$ whose jump set $J_{\bfy}$ is $\Gamma$.
In this way, the only variable is the deformation $\bfy$ and the energy \eqref{eq:introEnergy} is substituted by
\begin{equation}\label{eq:introEnergySBV}
     E[\bfy] = \int_{\Omega} W(\nabla \bfy) \dm V_{\bfx} + G_c \mathcal{H}^{n-1} (J_{\bfy}) .
\end{equation}
When the time-dependence is taken into account, and, hence, the irreversibility of the crack reflecting that cracks do not heal, the term $\mathcal{H}^{n-1} (J_{\bfy})$ is substituted by $\mathcal{H}^{n-1} (J_{\bfy} \cup K(t))$, where $K(t)$ is the union of all previous crack sets of the deformation, i.e., $\bigcup_{s<t} J_{\bfy (s)}$.
The existence of a quasistatic evolution for this model was first proved by Francfort and Larsen \cite{FrLa03} and Dal Maso, Francfort and Toader \cite{DaFrTo05}, and then underwent many generalizations.

A direct approach to the numerical minimization of the functional is intractable using standard methods.
A fruitful procedure is the construction of an approximating sequence of elliptic functionals that $\Gamma$-converge to the functional to approximate (see, e.g., \cite{Braides98}).
Inspired by the result of Modica and Mortola \cite{MoMo77,Modica87}, Ambrosio and Tortorelli \cite{AmTo90,AmTo92} introduced an approximation of the Mumford--Shah model, which, in its vectorial version, turns out to be an approximation of \eqref{eq:introEnergySBV}.
It reads as follows:
\begin{equation}\label{eq:introAT}
    E[\bfy,\phi] 
    = 
    \int_{\Omega} \left( \phi^2 + \eta_{\e} \right) W (\nabla \bfy) \dm V_{\bfx} + G_c \int_{\Omega} \left( \frac{(1-\phi)^2}{4\epsilon} + \epsilon |\nabla\phi|^2 \right) \dm V_\bfx .
\end{equation}
Here $\phi$ is a new variable, the crack indicator field, and $\eta_{\e}$ is an infinitesimal that goes to zero faster than $\e$.
The field $\phi$ satisfies $0 \leq \phi \leq 1$ everywhere and when $\phi (\bfx) \simeq 1$ it signals that the material at $\bfx$ is healthy, whereas $\phi (\bfx) \simeq 0$ means that the material at $\bfx$ is damaged.
The number $\eta_{\e}$ makes the first integral of \eqref{eq:introAT} elliptic, and, hence it avoids the degeneracy in the regions when $\phi=0$.
The $\Gamma$-convergence of \eqref{eq:introAT} to \eqref{eq:introEnergySBV}  was proved (under different assumptions on $W$) by Focardi \cite{Focardi01} and Chambolle \cite{Chambolle04} (see also \cite{BrChSo07,HeMoXu15}).
As a consequence of the $\Gamma$-convergence result, as $\e \to 0$, minimizers $(\bfy_{\e}, \phi_{\epsilon})$ at the level $\e$ of the functional \eqref{eq:introAT} tend to $(\bfy, 1)$, where $\bfy$ is a minimizer of \eqref{eq:introEnergySBV}.

When time-independence is put into the model, the irreversibility of the crack is translated into the restriction that $\phi(t)$ is nondecreasing with $t$.

Numerical studies and experiments for this model can be found in \cite{BoCh00,BoFrMa00,Giacomini05,Bourdin07,BuOrSu10}.
See also the review paper \cite{BoFrMa08}.
The model has also been widely characterized and applied in the mechanics community; a sample from this vast literature include \cite{clayton2014geometrically,clayton2015nonlinear,da2013sharp,lo2019phase,ambati2015review,SUN2021101277,agrawal2017dependence,wu2020phase,kamensky2018hyperbolic,moutsanidis2018hyperbolic,diehl2022comparative,clayton2021nonlinear}; particularly, we highlight the important work of \cite{abdollahi2012phase} that deals with developing effective crack energies in the context of electrical boundary conditions on the crack face.

%%%%%%%%%%%%%%%%%%%%%
%%%%%%%%%%%%%%%%%%%%%
%%%%%%%%%%%%%%%%%%%%%
%%%%%%%%%%%%%%%%%%%%%
\subsection{The proposed model}\label{subse:proposed}

We make some modifications to the basic model \eqref{eq:introAT}.
First, we add an additional field that keeps track of the orientation of the crack\footnote{
    The essentially-similar idea of accounting for crack orientations and level of damage through tensorial internal variables was introduced in continuum damage mechanics a few decades ago, e.g., \cite{murakami1988mechanical}, \cite[Section 7.3.1]{lemaitre1994mechanics}; we thank Pradeep Sharma for mentioning this to us.
}; this is a vector field $\bfd$, which is imposed to have norm $|\bfd| \leq 1$.
A value $|\bfd| \simeq 0$ indicates that the material is healthy, whereas $|\bfd| \simeq 1$ indicates that the material is damaged (cracked).
The direction $\bfn := \bfd / |\bfd|$ indicates the normal to the crack.
We will explain later how $\bfd$ evolves in a quasistatic evolution.
The comparison between this $\bfd$ and the $\phi$ of model \eqref{eq:introAT} is not direct, but, in a sense, $|\bfd|$ plays the role of $1-\phi$, while $\bfd$ should be parallel to $\nabla \phi$.

We will also introduce a new energy term $\Wd$ depending on $\nabla \bfy$ and $\bfn$ that is only activated in the cracked region.
The subscript $\mathrm{d}$ in $\Wd$ stands for `damaged'.
The dependence on $\bfn$ makes it possible to distinguish the different deformation modes, according to whether they are of extension/compression type or shear type with respect to the crack.
Thus, $\Wd$ replaces $W$ as the energy in the cracked region.
The fictional effective material that is placed in the ``crack volume'' in the reference configuration is no longer taken to have zero elastic energy (as in the classical model explained in Section \ref{subse:classical}), but instead has nonzero elastic energy for specific deformation modes and zero elastic energy for other modes, as sketched in Figure \ref{fig:overview}.
The modes are distinguished through the local orientation of the crack (obtained from $\bfn$) and the elastic energy of the nonzero modes are set up to match the elastic response of the original material $W$.
The specific form of $\Wd$ will be described in Section \ref{se:W1}.

The volume energy will be a convex combination between the bulk energy $W$ and the cracked energy $\Wd$.
We impose the convex restriction $|\bfd| \leq 1$, and, in analogy with \eqref{eq:introAT}, the volume energy will be
\begin{equation}\label{eq:bulk1}
 \int_{\Omega} \left( (1- |\bfd|)^2 W(\nabla\bfy) + \left(1- (1- |\bfd|)^2\right) \, \Wd\left(\nabla\bfy,\bfn\right) \right) \dm V_{\bfx} .
\end{equation}
In the regions where the material is healthy ($|\bfd| \simeq 0$) the bulk energy is essentially $W$, and where the material is damaged ($|\bfd| \simeq 1$) the bulk energy is essentially $\Wd$.
When the material is healthy, we have $|\bfd| \simeq 0$ so the unit vector $\bfn$ is undefined or ill-defined, but this is not a problem because $\Wd$ is multiplied by $(1- (1- |\bfd|)^2)$.

In addition, we will add an infinitesimal $\eta_{\eps} >0$ to the factor in $W$ of the bulk energy in \eqref{eq:bulk1}, as in the Ambrosio--Tortorelli formulation.
The presence of this term $\eta_{\epsilon}$ prevents the loss of ellipticity (i.e., the degeneracy) in a region where the material is totally cracked (when $|\bfd| \simeq 1$).
This infinitesimal has the property $\eta_{\epsilon} \ll \epsilon^{p-1}$, and is imposed to ensure the $\Gamma$-convergence of this model to the sharp-interface model of fracture \eqref{eq:introEnergySBV}, as in \cite{Focardi01,Chambolle04,BrChSo07,HeMoXu15}.
Here $p$ is the growth exponent of $W$ at infinity (basically, the exponent of $|\bfF|$ in the expression of $W$), which does not play an important role for the moment; for example, for a Mooney--Rivlin material, $p=2$.

We will also add the (Modica--Mortola or Ambrosio--Tortorelli) term
\[
 \int_{\Omega} \left( \frac{|\bfd|^2}{2\epsilon} + \frac{\epsilon}{2} |\nabla \bfd|^2 \right) \dm V_{\bfx} ,
\]
which is expected to converge to the sharp-energy term as $\eps \to 0$.

All in all, the proposed energy is
\begin{equation}
\label{eqn:full-model}
\begin{split}
    & E[\bfy,\bfd] =
    \\
    & \int_{\Omega} \left( ( (1- |\bfd|)^2 + \eta_{\epsilon}) \, W(\nabla\bfy) + \left(1- (1- |\bfd|)^2\right) \, \Wd\left(\nabla\bfy,\bfn\right) \right) \dm V_{\bfx} 
 	+ 
	G_c \int_{\Omega} \left( \frac{|\bfd|^2}{2\epsilon} + \frac{\epsilon}{2} |\nabla \bfd|^2 \right) \dm V_{\bfx}
\end{split}
\end{equation}
with the restriction $|\bfd| \leq 1$.

%%%%%%%%%%%%%%%%%%%%%
%%%%%%%%%%%%%%%%%%%%%
%%%%%%%%%%%%%%%%%%%%%
%%%%%%%%%%%%%%%%%%%%%
\paragraph*{Notation.}

Vector, matrices and higher order tensors are written in boldface.

The set of $3 \times 3$ matrices with positive determinant is denoted by $\R^{3 \times 3}_+$, while $SO(3)$ stands for its subset of rotations.
Analogous notation is used in dimension $2$.

The inverse of an invertible matrix $\bfF$ is $\bfF^{-1}$, its transpose is $\bfF^T$ and the inverse of its transpose is $\bfF^{-T}$.
Its determinant is $\det \bfF$, its cofactor $\cof \bfF$, which satisfies $\cof \bfF = (\det \bfF) \bfF^{-T}$.
Its norm $|\bfF|$ is the square root of $\sum_{ij} F_{ij}^2$.

Given two vectors $\bfa, \bfb$, its tensor product $\bfa \otimes \bfb$ is the matrix with components $(a \otimes b)_{ij} = a_i b_j$.

%%%%%%%%%%%%%%%%%%%%%
%%%%%%%%%%%%%%%%%%%%%
%%%%%%%%%%%%%%%%%%%%%
%%%%%%%%%%%%%%%%%%%%%
\paragraph*{Structure of the paper.}

In Section \ref{se:W1}, we define the effective energy $\Wd$ and show its main properties. As a preliminary, we recall the QR decomposition of a matrix.
In Section \ref{se:W1stress}, we recall the classical crack face traction condition for smooth frictionless cracks and compare it with the condition satisfied by $\Wd$.
Sections \ref{se:Mooney}, \ref{se:pq} and \ref{se:ExampleGeneral} are devoted to the calculation of examples of effective energies $\Wd$ given a stored energy $W$.
We treat both the 2D and the 3D cases.
Precisely, in Section \ref{se:Mooney}, we deal with a Mooney--Rivlin energy, in Section \ref{se:pq} with a $(p,q)$-energy (a generalization of Mooney--Rivlin allowing for general -- not necessarily quadratic -- exponents $p$ and $q$), and in Section \ref{se:ExampleGeneral} with a very general energy $W$; in the latter case, of course, we cannot give an explicit expression for $\Wd$.
In Section \ref{se:linear}, we develop the theory for small strain: given an energy $W$, we build the effective energy $\Wd$, and then approximate it by neglecting terms that are higher-order than quadratic in the strain. 
In Section \ref{se:numerical}, we describe the numerical implementation, with special emphasis on the irreversibility of the crack.
In Section \ref{se:calculations}, we present some numerical examples; all of these are in the setting of large deformations.
Section \ref{se:discussion} is the concluding discussion.
In Appendix \ref{sec:Vaibhav-comparison}, we discuss some deficiencies of the energy-splitting method, which, in fact, was one of the motivation to construct the energy $\Wd$ of this article.
In Appendix \ref{ap:proofs}, we collect the proofs of all results stated in the article.

%%%%%%%%%%%%%%%%%%%%%
%%%%%%%%%%%%%%%%%%%%%
%%%%%%%%%%%%%%%%%%%%%
%%%%%%%%%%%%%%%%%%%%%
\section{Effective energy for regularized cracks}\label{se:W1}

In this section we define the effective energy $\Wd$ given an energy $W$.
More precisely, in Subsection \ref{subse:motivation} we describe the  properties that such an effective energy should have.
In Subsection \ref{subse:QR} we recall the QR decomposition of a matrix, which provides us with a language to express the desirable properties of $\Wd$.
We also recall the concept of frame-indifference for the energy density.
In Subsection \ref{subse:relaxation} we define $\Wd$ as a minimization of $W$ over certain modes.
We also state the main properties of $\Wd$.
Subsection \ref{subse:multiple} is a remark about the graph of $\Wd (\bfF, \bfn)$ in terms of $\bfn$: while the definition of $\Wd$ was done so that $\Wd (\bfF, \bfn) = 0$ for certain orientations $\bfn$ (depending on $\bfF$), we show that, in addition, sometimes $\Wd (\bfF, \bfn_1) = 0$ for other $\bfn_1$.

\subsection{Motivation and heuristics}\label{subse:motivation}

The term $\Wd$ is the main contribution of this work, and will make a crucial difference between our model and the original Ambrosio--Tortorelli model and variants (see Subsection \ref{subse:classical}).
The starting idea for a definition of $\Wd$ is the following.
Imagine that $W$ is isotropic and a crack has been formed.
Then, close to the crack the effective response should not be isotropic anymore, and this non-isotropy will be detected by the effective energy $\Wd$.

This $\Wd$ has two variables: the deformation gradient $\nabla \bfy$ and a unit vector $\bfn$, which is expected to be normal to the crack.
The definition of $\Wd$ should be such that, after the crack has been formed, some basic deformation modes (shear, compresion, extension) will or will not have energy, according to their orientation relative to the crack.
In this heuristic explanation, we assume implicitly that $W \geq 0$ and $W(\bfI) = 0$.
Specifically, we want the following basic modes to carry no effective energy:
\begin{enumerate}[label=(\alph*)]

\item\label{item:eP} Extension perpendicular to the crack.

\item\label{item:sT} Shear parallel to the crack.
\end{enumerate}
On the other hand, we expect the following basic modes to have positive energy:
\begin{enumerate}[label=(\alph*),resume]
\item\label{item:ccT} Compression parallel to the crack.

\item\label{item:ecT} Extension parallel to the crack.

\item\label{item:cP} Compression perpendicular to the crack.
\end{enumerate}
Specifically, modes \ref{item:ccT} and \ref{item:cP} should have the same energy as $W$, while mode \ref{item:ecT}, slightly less so as to take into account the energy necessary to increase the volume of the body; in the linear setting, this is equivalent to considering that the Poisson ratio of the material is typically strictly positive.  
Any other mode will carry some effective energy, but less than $W$, so $0 < \Wd < W$.

See Figure \ref{fig:modes} for a 2D representation of modes \ref{item:eP}--\ref{item:cP}.
%%%

\begin{figure}[h!]
\centering
    \begin{tabular}{c|ccccc}
         Deformation mode & (a) & (b) & (c) & (d) & (e) \\
         \hline
        \raisebox{2em}{Loading}
         &{\includegraphics[width=0.1\textwidth, height = 0.1\textwidth]{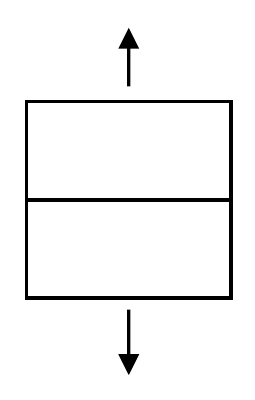}}
         & 	{\includegraphics[width=0.1\textwidth, height = 0.1\textwidth]{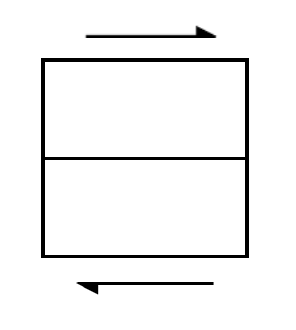}}
         & 	{\includegraphics[width=0.14\textwidth, height = 0.1\textwidth]{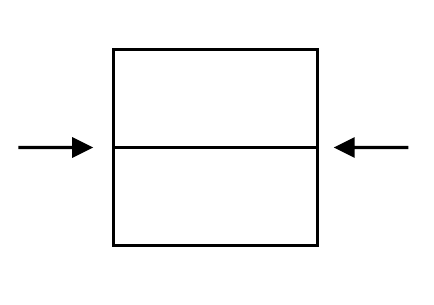}}
         & {\includegraphics[width=0.13\textwidth, height = 0.1\textwidth]{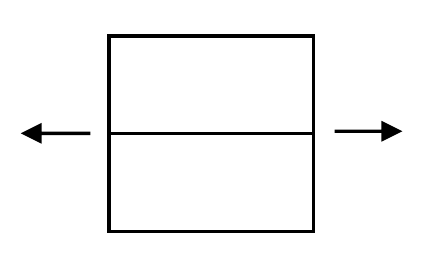}}
         & \includegraphics[width=0.1\textwidth, height = 0.12\textwidth]{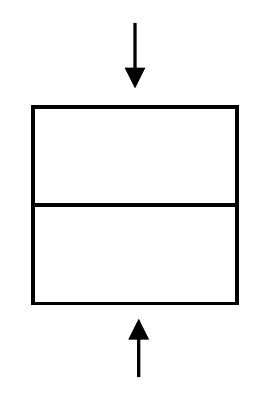}
         \\        \hline
         \raisebox{2em}{Intact Response}
         & {\includegraphics[width=0.1\textwidth, height = 0.1\textwidth]{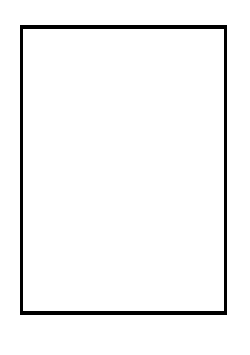}}
         & {\includegraphics[width=0.1\textwidth, height = 0.1\textwidth]{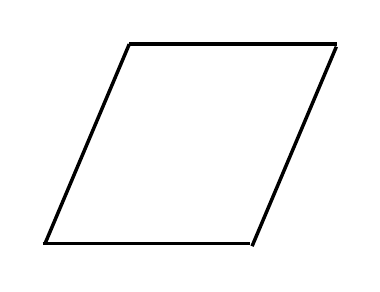}}
         & {\includegraphics[width=0.07\textwidth, height = 0.1\textwidth]{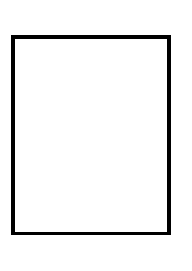}}
         & {\includegraphics[width=0.1\textwidth, height = 0.1\textwidth]{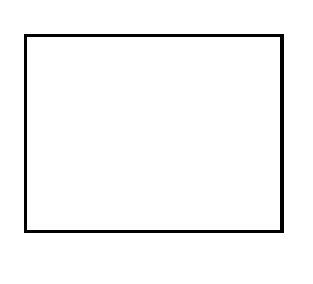}}
         & {\includegraphics[width=0.1\textwidth, height = 0.08\textwidth]{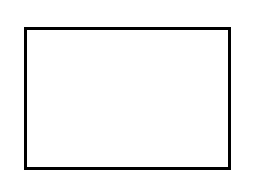}}
         \\ \hline
         \raisebox{2em}{Crack Response}
         &{\includegraphics[width=0.1\textwidth, height = 0.1\textwidth]{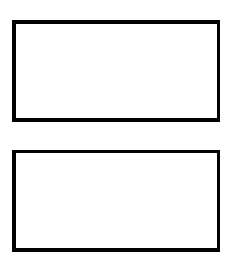}}
         & {\includegraphics[width=0.1\textwidth, height = 0.1\textwidth]{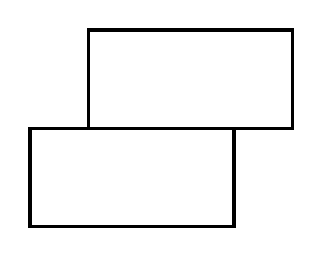}}
         & {\includegraphics[width=0.07\textwidth, height = 0.1\textwidth]{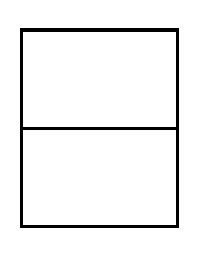}}
         & {\includegraphics[width=0.1\textwidth, height = 0.1\textwidth]{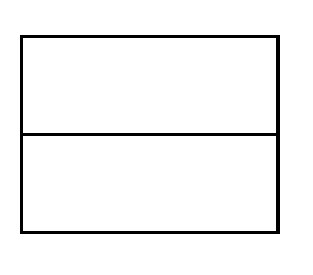}}
         & {\includegraphics[width=0.1\textwidth, height = 0.08\textwidth]{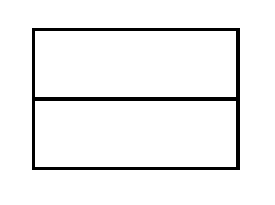}} \\
    \end{tabular}
	\caption{The top row shows different loadings, and the middle and lower rows show the idealized deformation for intact and cracked specimens, respectively. Based on this idealization, we assign zero energy to modes (a) and (b).}
\label{fig:modes}
\end{figure}

%%%%%%%%%%%%%%%%%%%%%
%%%%%%%%%%%%%%%%%%%%%
%%%%%%%%%%%%%%%%%%%%%
%%%%%%%%%%%%%%%%%%%%%
\subsection{QR decomposition and frame-indifference}\label{subse:QR}

In this section we state the key representation result of the deformation gradient that will be used to formulate the definition of $\Wd$.
Henceforth we will use the following notation for triangular matrices: for an orthonormal basis $\{ \bft_1, \bft_2, \bfn \}$, and numbers $A_{\bfn \bfn} , A_{\bft_1 \bft_1}, A_{\bft_2 \bft_2} > 0$ and $A_{\bft_1 \bfn}, A_{\bft_2 \bfn}, A_{\bft_1 \bft_2} \in \R$, we define
\begin{equation}\label{eq:At1t2n}
\begin{split}
 & \bfA_{\bft_1, \bft_2, \bfn} (A_{\bfn \bfn}, A_{\bft_1 \bft_1}, A_{\bft_2 \bft_2}, A_{\bft_1 \bfn}, A_{\bft_2 \bfn}, A_{\bft_1 \bft_2}) \\
 & := A_{\bfn \bfn} \bfn \otimes \bfn + A_{\bft_1 \bft_1} \bft_1 \otimes \bft_1 + A_{\bft_2 \bft_2} \bft_2 \otimes \bft_2 + A_{\bft_1 \bfn} \bft_1 \otimes \bfn + A_{\bft_2 \bfn} \bft_2 \otimes \bfn + A_{\bft_1 \bft_2} \bft_1 \otimes \bft_2 .
\end{split}
\end{equation}
If the basis considered is the canonical basis $\{ \bfe_1, \bfe_2, \bfe_3 \}$, instead of $A_{\bfe_i \bfe_j}$ we will write $A_{ij}$.

\begin{proposition}\label{prop:QR}
For any $\bfF \in \R^{3 \times 3}_+$ and any orthonormal basis $\{ \bft_1, \bft_2, \bfn \}$ there exist unique
\[
 \bfR \in SO(3) , \qquad A_{\bfn \bfn} , A_{\bft_1 \bft_1}, A_{\bft_2 \bft_2} > 0 , \qquad A_{\bft_1 \bfn}, A_{\bft_2 \bfn}, A_{\bft_1 \bft_2} \in \R
\]
such that
\[
 \bfF = \bfR \bfA_{\bft_1, \bft_2, \bfn} (A_{\bfn \bfn}, A_{\bft_1 \bft_1}, A_{\bft_2 \bft_2}, A_{\bft_1 \bfn}, A_{\bft_2 \bfn}, A_{\bft_1 \bft_2}) .
\]
\end{proposition}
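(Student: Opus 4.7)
The statement is the standard QR (Gram--Schmidt) decomposition of $\bfF$ re-expressed in the orthonormal frame $\{\bft_1,\bft_2,\bfn\}$ in place of the canonical one. My plan is to apply Gram--Schmidt directly to the ordered triple $\bfF\bft_1,\bfF\bft_2,\bfF\bfn$, read off the six scalars $A_{\cdot\cdot}$ as the resulting ``triangular'' coefficients, use the orthonormal output to define the rotation $\bfR$, and finally upgrade $\bfR\in O(3)$ to $\bfR\in SO(3)$ via a determinant identity.

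\textbf{Construction.} Since $\bfF\in\R^{3\times 3}_+$ is invertible, the vectors $\bfF\bft_1,\bfF\bft_2,\bfF\bfn$ are linearly independent. Running Gram--Schmidt on this ordered triple, and always choosing the normalization so that the diagonal coefficients are strictly positive, yields uniquely determined scalars $A_{\bft_1\bft_1},A_{\bft_2\bft_2},A_{\bfn\bfn}>0$ and $A_{\bft_1\bft_2},A_{\bft_1\bfn},A_{\bft_2\bfn}\in\R$ together with an orthonormal triple $\bfu_1,\bfu_2,\bfu_3$ satisfying
\begin{align*}
\bfF\bft_1 &= A_{\bft_1\bft_1}\,\bfu_1,\\
\bfF\bft_2 &= A_{\bft_1\bft_2}\,\bfu_1 + A_{\bft_2\bft_2}\,\bfu_2,\\
\bfF\bfn &= A_{\bft_1\bfn}\,\bfu_1 + A_{\bft_2\bfn}\,\bfu_2 + A_{\bfn\bfn}\,\bfu_3 .
\end{align*}
I would then define $\bfR$ as the unique linear map sending $\bft_1\mapsto\bfu_1$, $\bft_2\mapsto\bfu_2$, $\bfn\mapsto\bfu_3$. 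Because both $\{\bft_i\}$ and $\{\bfu_i\}$ are orthonormal, $\bfR\in O(3)$. Comparing the three displays above with the action of $\bfA_{\bft_1,\bft_2,\bfn}(A_{\bfn\bfn},A_{\bft_1\bft_1},A_{\bft_2\bft_2},A_{\bft_1\bfn},A_{\bft_2\bfn},A_{\bft_1\bft_2})$ from \eqref{eq:At1t2n} on the same three basis vectors then gives $\bfF=\bfR\,\bfA_{\bft_1,\bft_2,\bfn}(\cdots)$ as an identity of tensors.

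\textbf{Orientation and uniqueness.} To upgrade $\bfR\in O(3)$ to $\bfR\in SO(3)$, I assemble the three equations above as column identities and take determinants:
\[
(\det\bfF)\,\det[\bft_1\mid\bft_2\mid\bfn] \;=\; A_{\bft_1\bft_1}A_{\bft_2\bft_2}A_{\bfn\bfn}\,\det[\bfu_1\mid\bfu_2\mid\bfu_3].
\]
Since $\det\bfF>0$ and the three diagonal coefficients are strictly positive, the two orthonormal frames must share an orientation, which is precisely $\det\bfR=+1$. Uniqueness is then immediate from the Gram--Schmidt construction: any alternative decomposition $\bfF=\bfR'\bfA'$ of the claimed form, applied to $\bft_1$, forces $\bfR'\bft_1=\bfF\bft_1/|\bfF\bft_1|=\bfu_1$ and $A'_{\bft_1\bft_1}=|\bfF\bft_1|$ by the positivity convention; successively applying $\bfF=\bfR'\bfA'$ to $\bft_2$ and then $\bfn$ and using the positivity of the already-identified diagonal coefficients matches the remaining scalars and the remaining images of $\bfR'$ inductively. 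The only genuinely non-routine point---which I would flag as the main obstacle---is the orientation check, because the statement restricts $\bfR$ to $SO(3)$ rather than merely $O(3)$, and it is the determinant identity above, together with $\det\bfF>0$, that makes this restriction compatible with the positive-diagonal normalization, regardless of whether the given orthonormal basis $\{\bft_1,\bft_2,\bfn\}$ is itself right- or left-handed.
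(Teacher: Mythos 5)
Your proof is correct and, modulo the level of detail, takes the same route as the paper: the paper simply cites the classical QR/Gram--Schmidt theorem (Horn--Johnson Th.\ 2.6.1, Trefethen--Bau Thms.\ 7.1--7.2) and observes that ``upper triangular with respect to $\{\bft_1,\bft_2,\bfn\}$'' means exactly a matrix of the form $\bfA_{\bft_1,\bft_2,\bfn}(\cdots)$, whereas you reprove QR directly by running Gram--Schmidt on the ordered triple $\bfF\bft_1,\bfF\bft_2,\bfF\bfn$. Your version makes explicit two points the paper delegates to the cited references: that the construction works relative to an arbitrary orthonormal basis (not just the canonical one), and that the determinant identity $\det\bfF\,\det[\bft_1\,|\,\bft_2\,|\,\bfn]=A_{\bft_1\bft_1}A_{\bft_2\bft_2}A_{\bfn\bfn}\,\det[\bfu_1\,|\,\bfu_2\,|\,\bfu_3]$, combined with $\det\bfF>0$ and positivity of the diagonal, forces $\det\bfR=+1$ so that $\bfR\in SO(3)$ rather than merely $O(3)$; that last step is the one genuinely substantive point and you handle it correctly.
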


We remark that QR decompositions have been applied for creating constitutive equations in elasticity in the past few years \cite{paul2021coordinate,clayton2020constitutive,Srinivasa12,freed2020laplace}, where this approach was shown to provide important advantages.
The coefficients of an upper triangular matrix provide a clear interpretation in terms of compression, extension and shear in a specified frame, as opposed to the typical polar decomposition.
With this language, we can describe the modes  \ref{item:eP}--\ref{item:cP} precisely:

\begin{enumerate}[label=(\alph*)]
    
    \item
    $\bfF = A_{\bfn \bfn} \bfn \otimes \bfn + \bft_1 \otimes \bft_1 + \bft_2 \otimes \bft_2$ with $A_{\bfn \bfn} \geq 1$.
    
    \item
    $\bfF = \bfI + A_{\bft_1 \bfn} \bft_1 \otimes \bfn + A_{\bft_2 \bfn} \bft_2 \otimes \bfn$ with $A_{\bft_1 \bfn}, A_{\bft_2 \bfn} \in \R$.
    
    \item
    $\bfF = \bfn \otimes \bfn + A_{\bft_1 \bft_1} \bft_1 \otimes \bft_1 + A_{\bft_2 \bft_2} \bft_2 \otimes \bft_2$ with $0 < A_{\bft_1 \bft_1}, A_{\bft_2 \bft_2} < 1$.
    
    \item
    $\bfF = \bfn \otimes \bfn + A_{\bft_1 \bft_1} \bft_1 \otimes \bft_1 + A_{\bft_2 \bft_2} \bft_2 \otimes \bft_2$ with $A_{\bft_1 \bft_1}, A_{\bft_2 \bft_2} \geq 1$.
    
    \item
    $\bfF = A_{\bfn \bfn} \bfn \otimes \bfn + \bft_1 \otimes \bft_1 + \bft_2 \otimes \bft_2$ with $0 < A_{\bfn \bfn} < 1$.

\end{enumerate}

The explicit expression of $\bfR$ and $\bfA$ of the QR decomposition is cumbersome.
Nevertheless, the coefficient $A_{\bfn \bfn}$, which will be the most relevant in the sequel, can be given an easy expression.

\begin{lemma}\label{le:Ann}
    Let $\bfF \in \R^{3 \times 3}_+$ and let $\{ \bft_1, \bft_2, \bfn \}$ be an orthonormal basis.
    Let $\bfF = \bfR \bfA$ be its QR decomposition with respect to that basis, according to Proposition \ref{prop:QR}.
    Then
    \begin{equation}\label{eq:Ann}
      A_{\bfn \bfn} = \frac{1}{\left| \bfF^{-T} \, \bfn \right|}.
    \end{equation}
\end{lemma}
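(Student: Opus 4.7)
The plan is to exploit the upper-triangular structure of $\bfA$ in the basis $\{\bft_1,\bft_2,\bfn\}$: in this basis, the only term of \eqref{eq:At1t2n} that maps anything into a component along $\bft_1$ or $\bft_2$ after transposition involves $\bft_1\otimes\bfn$ or $\bft_2\otimes\bfn$, and these all vanish when $\bfA^T$ is applied to $\bfn$. So the first observation to record is that
\[
\bfA^T \bfn = A_{\bfn\bfn}\,\bfn,
\]
which follows by expanding $\bfA^T$ using $(\bfa\otimes\bfb)^T=\bfb\otimes\bfa$ and evaluating on $\bfn$; only the $\bfn\otimes\bfn$ term survives since $\bft_1\cdot\bfn=\bft_2\cdot\bfn=0$.

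Since $A_{\bfn\bfn}>0$, this identity inverts directly to $\bfA^{-T}\bfn = A_{\bfn\bfn}^{-1}\,\bfn$. Next I would use the factorization $\bfF=\bfR\bfA$ together with $\bfR^{-T}=\bfR$ to write
\[
\bfF^{-T}\bfn \;=\; \bfR\,\bfA^{-T}\bfn \;=\; \frac{1}{A_{\bfn\bfn}}\,\bfR\bfn.
\]
Taking Euclidean norms and using that $\bfR\in SO(3)$ preserves norms gives $|\bfF^{-T}\bfn| = A_{\bfn\bfn}^{-1}$, which is \eqref{eq:Ann}.

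There is no real obstacle here; the only subtlety is checking the direction of the triangularity. The convention in \eqref{eq:At1t2n} places $\bfn$ as the distinguished "last" basis vector, so that the $\bfn$-column of $\bfA$ can be nonzero in all entries but the $\bfn$-row of $\bfA$ is nonzero only in the $\bfn\bfn$ slot; it is precisely this second property that yields $\bfA^T\bfn=A_{\bfn\bfn}\bfn$. I would therefore make that structural fact explicit at the start (perhaps by writing the matrix of $\bfA$ in coordinates $(\bft_1,\bft_2,\bfn)$) before carrying out the three-line computation above.
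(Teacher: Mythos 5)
Your proof is correct and follows the same strategy as the paper: factor $\bfF^{-T} = \bfR\,\bfA^{-T}$, use that $\bfR\in SO(3)$ preserves norms, and reduce to showing $\bfA^{-T}\bfn = A_{\bfn\bfn}^{-1}\,\bfn$. Your derivation of that last fact via the eigenvector identity $\bfA^T\bfn = A_{\bfn\bfn}\,\bfn$ is a cleaner route than the paper's explicit computation of all entries of $\bfA^{-T}$, but the underlying argument is the same.
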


For the sake of completeness, we recall that a function $W\colon\R^{3 \times 3}_+ \to \R$ is \emph{frame-indifferent} when $W (\bfF) = W (\bfR \bfF)$ for all $\bfF \in \R^{3 \times 3}_+$ and $\bfR \in SO(3)$.
Since frame-indifference is a physical requirement we will henceforth impose that our stored-energy function $W$ is frame-indifferent. 

Naturally, $\Wd$ must be frame-indifferent, too.
Since the crack (and, hence $\bfn$) is viewed in the reference configuration, the frame-indifference for $\Wd$ takes the form
\begin{equation}\label{eq:Wdindifference}
 \Wd (\bfF, \bfn) = \Wd (\bfR \bfF, \bfn) , 
\end{equation}
for all $\bfF \in \R^{3 \times 3}_+$, all $\bfR \in SO(3)$ and all unit vectors $\bfn$.
In addition, as $\bfn$ is determined up to a sign, $\Wd$ must satisfy
\begin{equation}\label{eq:Wdinvariance}
 \Wd (\bfF, \bfn) = \Wd (\bfF, -\bfn) .
\end{equation}

\begin{remark}[Frame Indifference in terms of the Cauchy-Green tensor]
    An equivalent way of expressing frame-indifference is to require that $W(\bfF)$ be able to be written as $W^\bfC(\bfC) = W(\bfF)$, where $\bfC=\bfF^T \bfF$; equivalently, that $W(\bfF)$ can be expressed as $W^\bfU(\bfU) = W(\bfF)$, where $\bfU$ is the tensor square root of $\bfC$ (the symmetric positive definite matrix of the polar decomposition of $\bfF$).

    Here, we propose to work with the following decomposition of $\bfC$:
    \begin{equation}
        \bfA^T \bfA = \bfC, \text{ where } \bfA = \bfA_{\bft_1, \bft_2, \bfn} (A_{\bfn \bfn}, A_{\bft_1 \bft_1}, A_{\bft_2 \bft_2}, A_{\bft_1 \bfn}, A_{\bft_2 \bfn}, A_{\bft_1 \bft_2}) .
    \end{equation}
    From Proposition \ref{prop:QR}, $\bfA$ is unique given the orthonormal basis, i.e., $\bfA$ is a function of $\bfC$ given $\bft_1, \bft_2, \bfn$.
    Therefore, any energy function based on $\bfA$ is frame-indifferent.
    \hfill\qedsymbol
\end{remark}

\subsection{Relaxation over modes}\label{subse:relaxation}

In this section, given $W$, we provide a definition of $\Wd$, based on relaxation over modes.
We recapitulate the properties we require for $\Wd$: assuming that $W\geq 0$ and $W(\bfI) = 0$, the effective energy must have zero enery for modes \ref{item:eP}--\ref{item:sT} of Section \ref{subse:motivation}, the same energy as $W$ for modes \ref{item:ccT} and \ref{item:cP}, and an intermediate energy for mode \ref{item:ecT} and, in fact, any other mode.
Moreover, it must satisfy \eqref{eq:Wdindifference} and \eqref{eq:Wdinvariance}.
In addition, since it is an effective energy it should be $0 \leq \Wd \leq W$.

Among the many ways to define an energy density $\Wd$ with the properties above, we propose one based on minimization over modes, as we will develop in the following paragraphs.

\begin{proposition}\label{pr:alpha*}
Let $W \colon \R^{3 \times 3}_+ \to \R$ be continuous and satisfy that
\begin{equation}\label{eq:Winfty}
 W (\bfF) \to \infty \quad \text{as } \det \bfF \to 0 \text{ or } |\bfF| \to \infty .
\end{equation}
Let $\{\bft_1, \bft_2, \bfn\}$ be an orthonormal basis of $\R^3$.
Then:
\begin{enumerate}[label=\alph*)]
\item\label{item:alpha*} For each $A_{\bft_1 \bft_1}, A_{\bft_2 \bft_2} >0$ and $A_{\bft_1 \bft_2} \in \R$, the minimum \begin{equation}\label{eq:propertyW}
 \min_{\substack{A_{\bfn \bfn} >0 \\ A_{\bft_1 \bfn}, A_{\bft_2 \bfn} \in \R}} W \left( \bfA_{\bft_1, \bft_2, \bfn} (A_{\bfn \bfn}, A_{\bft_1 \bft_1}, A_{\bft_2 \bft_2}, A_{\bft_1 \bfn}, A_{\bft_2 \bfn}, A_{\bft_1 \bft_2}) \right)
\end{equation}
exists.

\item For each $A_{\bft_1 \bft_1}, A_{\bft_2 \bft_2}, A_{\bfn \bfn}>0$ and $A_{\bft_1 \bft_2} \in \R$, the minimum
\[
 \min_{A_{\bft_1 \bfn}, A_{\bft_2 \bfn} \in \R} W \left( \bfA_{\bft_1, \bft_2, \bfn} (A_{\bfn \bfn}, A_{\bft_1 \bft_1}, A_{\bft_2 \bft_2}, A_{\bft_1 \bfn}, A_{\bft_2 \bfn}, A_{\bft_1 \bft_2}) \right)
\]
exists.

\item For each $A_{\bft_1 \bft_1}, A_{\bft_2 \bft_2} >0$ and $A_{\bft_1 \bft_2} \in \R$, the minimum
\begin{multline*}
 A_{\bfn \bfn}^* = \min \Big\{ \bar{A}_{\bfn \bfn} > 0: \text{there exist } A_{\bft_1 \bfn}^* \text{ and } A_{\bft_2 \bfn}^* \text{ such that } \\
 \inf_{\substack{A_{\bfn \bfn} >0 \\ A_{\bft_1 \bfn}, A_{\bft_2 \bfn} \in \R}} W \left( \bfA_{\bft_1, \bft_2, \bfn} (A_{\bfn \bfn}, A_{\bft_1 \bft_1}, A_{\bft_2 \bft_2}, A_{\bft_1 \bfn}, A_{\bft_2 \bfn}, A_{\bft_1 \bft_2}) \right) \\
 = W \left( \bfA_{\bft_1, \bft_2, \bfn} (\bar{A}_{\bfn \bfn}, A_{\bft_1 \bft_1}, A_{\bft_2 \bft_2}, A_{\bft_1 \bfn}^*, A_{\bft_2 \bfn}^*, A_{\bft_1 \bft_2}) \right) \Big\}
\end{multline*}
exists.
\end{enumerate}
\end{proposition}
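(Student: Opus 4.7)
The plan is a standard direct-method argument: the coercivity hypothesis (\ref{eq:Winfty}) combined with the continuity of $W$ will give compactness of any minimizing sequence. The key preliminary observation is that when expressed in the orthonormal basis $\{\bft_1, \bft_2, \bfn\}$, the matrix $\bfA = \bfA_{\bft_1, \bft_2, \bfn}$ defined in (\ref{eq:At1t2n}) is upper triangular, so
\[
\det \bfA = A_{\bft_1 \bft_1} A_{\bft_2 \bft_2} A_{\bfn \bfn} , \qquad |\bfA|^2 = A_{\bft_1 \bft_1}^2 + A_{\bft_2 \bft_2}^2 + A_{\bfn \bfn}^2 + A_{\bft_1 \bfn}^2 + A_{\bft_2 \bfn}^2 + A_{\bft_1 \bft_2}^2 .
\]
This translates (\ref{eq:Winfty}) directly into blow-up of $W(\bfA)$ in the free parameters, since $A_{\bft_1 \bft_1}, A_{\bft_2 \bft_2} > 0$ are fixed throughout.

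For part (a), I would fix $A_{\bft_1 \bft_1}, A_{\bft_2 \bft_2}, A_{\bft_1 \bft_2}$ and consider the continuous map $(A_{\bfn \bfn}, A_{\bft_1 \bfn}, A_{\bft_2 \bfn}) \mapsto W(\bfA)$ on $(0,\infty) \times \R \times \R$. Coercivity via $\det \bfA \to 0$ forces $W(\bfA) \to \infty$ uniformly as $A_{\bfn \bfn} \to 0^+$, while coercivity via $|\bfA| \to \infty$ forces $W(\bfA) \to \infty$ whenever any of $A_{\bfn \bfn}, A_{\bft_1 \bfn}, A_{\bft_2 \bfn}$ grows unbounded in absolute value. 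Hence any minimizing sequence eventually lies in a compact rectangle $[\delta, M] \times [-M, M] \times [-M, M]$, Bolzano--Weierstrass produces a convergent subsequence, and continuity of $W$ ensures the limit realizes the infimum. Part (b) is obtained by the same argument, and is simpler: since $A_{\bfn \bfn}$ is fixed positive, only the $|\bfA| \to \infty$ branch of the coercivity is needed to bound $(A_{\bft_1 \bfn}, A_{\bft_2 \bfn})$ in $\R^2$.

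For part (c), denote by $m$ the minimum value obtained in part (a), and set
\[
S := \bigl\{ \bar{A}_{\bfn \bfn} > 0 : \exists \, A_{\bft_1 \bfn}^*, A_{\bft_2 \bfn}^* \in \R \text{ with } W \bigl( \bfA_{\bft_1, \bft_2, \bfn}(\bar{A}_{\bfn \bfn}, A_{\bft_1 \bft_1}, A_{\bft_2 \bft_2}, A_{\bft_1 \bfn}^*, A_{\bft_2 \bfn}^*, A_{\bft_1 \bft_2}) \bigr) = m \bigr\} .
\]
The set $S$ is nonempty by part (a). The $\det \bfA \to 0$ coercivity shows $W(\bfA) > m$ whenever $A_{\bfn \bfn}$ is smaller than some threshold (independent of the other entries), so $\inf S > 0$. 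I would then pick $\bar{A}_{\bfn \bfn}^{(k)} \in S$ with $\bar{A}_{\bfn \bfn}^{(k)} \to \inf S$ together with corresponding $(A_{\bft_1 \bfn}^{*,(k)}, A_{\bft_2 \bfn}^{*,(k)})$. Because $W(\bfA) = m$ along this sequence, the $|\bfA| \to \infty$ coercivity keeps the transverse coordinates bounded; extract a convergent subsequence and use continuity of $W$ to conclude that $\inf S$ is itself realized, i.e., $\inf S = \min S = A_{\bfn \bfn}^*$.

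The main (indeed only non-routine) obstacle is the last step of part (c): ensuring simultaneously that the limiting transverse coordinates do not escape to infinity and that the limit of $\bar{A}_{\bfn \bfn}^{(k)}$ does not degenerate to zero. Both controls come from the single coercivity hypothesis (\ref{eq:Winfty}) — the first from $|\bfA| \to \infty$ combined with boundedness of $W$ along the sequence, the second from $\det \bfA \to 0$ — once one exploits the explicit formulas for $\det \bfA$ and $|\bfA|$ noted at the outset.
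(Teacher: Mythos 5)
Your proposal is correct and follows essentially the same direct-method argument as the paper: compute $\det\bfA$ and $|\bfA|$ explicitly to translate the coercivity hypothesis \eqref{eq:Winfty} into compactness of minimizing sequences, then pass to the limit via continuity. The paper only writes out part~\ref{item:alpha*} explicitly, declaring the rest ``analogous,'' whereas you usefully spell out the slightly less routine argument for part~(c), correctly identifying that boundedness of the transverse coordinates and $\inf S>0$ both follow from coercivity once $W$ is held at the constant value $m$ along the sequence.
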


We are in a position to define the effective energy.

\begin{definition}\label{de:Wd}
Let $\{ \bft_1, \bft_2, \bfn \}$ be an orthonormal basis of $\R^3$.
Let $W \colon \R^{3 \times 3}_+ \to \R$ be continuous, frame-indifferent and satisfy \eqref{eq:Winfty}.
Given $\bfF \in \R^{3 \times 3}_+$, let $\bfF = \bfR \bfA$ be the QR decomposition of $\bfF$ with respect to the basis $\{ \bft_1, \bft_2, \bfn \}$, with
\[
 \bfA = \bfA_{\bft_1, \bft_2, \bfn} (A_{\bfn \bfn}, A_{\bft_1 \bft_1}, A_{\bft_2 \bft_2}, A_{\bft_1 \bfn}, A_{\bft_2 \bfn}, A_{\bft_1 \bft_2}) .
\]
Let $A_{\bfn \bfn}^*$ be as in Proposition \ref{pr:alpha*}.
We define
\[
 \Wd \left( \bfF, \bfn \right) = \begin{cases}
 \min_{\substack{A'_{\bfn \bfn} >0 \\ A'_{\bft_1 \bfn}, A'_{\bft_2 \bfn} \in \R}} W \left( \bfA_{\bft_1, \bft_2, \bfn} (A'_{\bfn \bfn}, A_{\bft_1 \bft_1}, A_{\bft_2 \bft_2}, A'_{\bft_1 \bfn}, A'_{\bft_2 \bfn}, A_{\bft_1 \bft_2}) \right) , & \text{if } A_{\bfn \bfn} \geq A_{\bfn \bfn}^* , \\
 \min_{A'_{\bft_1 \bfn}, A'_{\bft_2 \bfn} \in \R} W \left( \bfA_{\bft_1, \bft_2, \bfn} (A_{\bfn \bfn}, A_{\bft_1 \bft_1}, A_{\bft_2 \bft_2}, A'_{\bft_1 \bfn}, A'_{\bft_2 \bfn}, A_{\bft_1 \bft_2}) \right) , & \text{if } A_{\bfn \bfn} < A_{\bfn \bfn}^* .
 \end{cases}
\]
\end{definition}

The next result provides an alternative definition of $\Wd$ that does not rely so heavily on the QR decomposition; in fact, only on $A_{\bfn \bfn}$, which, in turn, can be given the closed-form formula of Lemma \ref{le:Ann}.
Thus, we provide a way of computing $\Wd$ without using Definition \ref{de:Wd}, which in some situations is useful for computational and theoretical purposes.

\begin{proposition}\label{pr:Wdalt}
Let $\{ \bft_1, \bft_2, \bfn \}$ be an orthonormal basis of $\R^3$.
Let $W : \R^{3 \times 3}_+ \to \R$ be continuous, frame-indifferent and satisfy \eqref{eq:Winfty}.
Given $\bfF \in \R^{3 \times 3}_+$, let $A_{\bfn \bfn}$ be as in \eqref{eq:Ann}.
Then
\begin{multline}\label{eq:Anns*}
 A_{\bfn \bfn}^* = A_{\bfn \bfn} \times \min \bigg\{ \bar{A}_{\bfn \bfn}''>0 : \text{there exist } \bar{A}_{\bft_1 \bfn}'', \bar{A}_{\bft_1\bfn}'' \in \R \text{ such that } \\
 \min_{\substack{A''_{\bfn \bfn} >0 \\ A''_{\bft_1 \bfn}, A''_{\bft_2 \bfn} \in \R}} \! \! W \left( \bfF \bfA_{\bft_1, \bft_2, \bfn} \left( A''_{\bfn \bfn}, 1, 1, A''_{\bft_1 \bfn}, A''_{\bft_2 \bfn}, 0 \right) \right) = W \left( \bfF \bfA_{\bft_1, \bft_2, \bfn} \left( \bar{A}''_{\bfn \bfn}, 1, 1, \bar{A}''_{\bft_1 \bfn}, \bar{A}''_{\bft_2 \bfn}, 0 \right) \right) \bigg\} 
\end{multline}
and
\[
 \Wd \left( \bfF, \bfn \right) = \begin{cases}
 \min_{\substack{A''_{\bfn \bfn} >0 \\ A''_{\bft_1 \bfn}, A''_{\bft_2 \bfn} \in \R}} W \left( \bfF \bfA_{\bft_1, \bft_2, \bfn} \left( A''_{\bfn \bfn}, 1, 1, A''_{\bft_1 \bfn}, A''_{\bft_2 \bfn}, 0 \right) \right) , & \text{if } A_{\bfn \bfn} \geq A_{\bfn \bfn}^* , \\
 \min_{A''_{\bft_1 \bfn}, A''_{\bft_2 \bfn} \in \R} W \left( \bfF \bfA_{\bft_1, \bft_2, \bfn} \left( A_{\bfn \bfn}, 1, 1, A''_{\bft_1 \bfn}, A''_{\bft_2 \bfn}, 0 \right) \right) , & \text{if } A_{\bfn \bfn} < A_{\bfn \bfn}^* .
 \end{cases}
\]
\end{proposition}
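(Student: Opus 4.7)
The plan is to reduce the alternative formula directly to Definition~\ref{de:Wd} by combining the frame-indifference of $W$ with the observation that right-multiplication of $\bfF$ by an upper-triangular matrix of the special form $\bfA_{\bft_1,\bft_2,\bfn}(A''_{\bfn\bfn}, 1, 1, A''_{\bft_1\bfn}, A''_{\bft_2\bfn}, 0)$ is a bijective reparametrization of exactly those QR coefficients being minimized over in Definition~\ref{de:Wd}.

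First I would write $\bfF = \bfR\bfA$ via Proposition~\ref{prop:QR}, with
$\bfA = \bfA_{\bft_1,\bft_2,\bfn}(A_{\bfn\bfn}, A_{\bft_1\bft_1}, A_{\bft_2\bft_2}, A_{\bft_1\bfn}, A_{\bft_2\bfn}, A_{\bft_1\bft_2})$,
and set $\bfA'' := \bfA_{\bft_1,\bft_2,\bfn}(A''_{\bfn\bfn}, 1, 1, A''_{\bft_1\bfn}, A''_{\bft_2\bfn}, 0)$.
A direct multiplication shows that $\bfA\bfA''$ is upper triangular in the frame $\{\bft_1,\bft_2,\bfn\}$, with diagonal $(A_{\bft_1\bft_1}, A_{\bft_2\bft_2}, A_{\bfn\bfn}A''_{\bfn\bfn})$, the $(1,2)$ entry $A_{\bft_1\bft_2}$ unchanged, and the two remaining entries equal to the affine expressions
\begin{align*}
[\bfA\bfA'']_{13} &= A_{\bft_1\bft_1}A''_{\bft_1\bfn} + A_{\bft_1\bft_2}A''_{\bft_2\bfn} + A_{\bft_1\bfn}A''_{\bfn\bfn}, \\
[\bfA\bfA'']_{23} &= A_{\bft_2\bft_2}A''_{\bft_2\bfn} + A_{\bft_2\bfn}A''_{\bfn\bfn}.
\end{align*}
Since $\bfF\bfA'' = \bfR(\bfA\bfA'')$ and $W$ is frame-indifferent, $W(\bfF\bfA'') = W(\bfA\bfA'')$, so evaluation of the alternative formula reduces to an evaluation of $W$ on an upper-triangular matrix in the form appearing in Definition~\ref{de:Wd}.

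Because $A_{\bft_1\bft_1}, A_{\bft_2\bft_2}, A_{\bfn\bfn} > 0$, the induced map $(A''_{\bfn\bfn}, A''_{\bft_1\bfn}, A''_{\bft_2\bfn}) \mapsto (A_{\bfn\bfn}A''_{\bfn\bfn}, [\bfA\bfA'']_{13}, [\bfA\bfA'']_{23})$ is a bijection of $(0,\infty)\times\R\times\R$ onto itself, and the $(1,1), (2,2), (1,2)$ entries of $\bfA\bfA''$ coincide with those of $\bfA$. Hence the first case of the alternative formula ($A_{\bfn\bfn} \geq A_{\bfn\bfn}^*$, free minimization over $(A''_{\bfn\bfn}, A''_{\bft_1\bfn}, A''_{\bft_2\bfn})$) matches the first case of Definition~\ref{de:Wd} under the substitution $A'_{\bfn\bfn} = A_{\bfn\bfn}A''_{\bfn\bfn}$, $A'_{\bft_1\bfn} = [\bfA\bfA'']_{13}$, $A'_{\bft_2\bfn} = [\bfA\bfA'']_{23}$. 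The second case follows by fixing $A''_{\bfn\bfn}$ so that the $(3,3)$ entry of $\bfA\bfA''$ equals the prescribed value $A_{\bfn\bfn}$ and running the bijection on the remaining two parameters. For the identity~\eqref{eq:Anns*}, the same substitution $\bar{A}_{\bfn\bfn} = A_{\bfn\bfn}\bar{A}''_{\bfn\bfn}$—a strictly increasing bijection of $(0,\infty)$ onto itself—transforms the set defining $A_{\bfn\bfn}^*$ in Proposition~\ref{pr:alpha*} into the set on the right-hand side of~\eqref{eq:Anns*}, and factoring out $A_{\bfn\bfn}$ from the minimum yields the claimed identity.

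The main obstacle is purely bookkeeping: writing out the entries of $\bfA\bfA''$ carefully and tracking that the positivity constraint $A''_{\bfn\bfn} > 0$ is preserved under the change of variables (which it is, because $A_{\bfn\bfn} > 0$). No new analytic input is required beyond the existence of minimizers already supplied by Proposition~\ref{pr:alpha*}, since these existence statements transfer automatically across the bijective reparametrization.
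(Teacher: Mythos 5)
Your proof is correct and takes essentially the same route as the paper: both combine frame-indifference with a bijective reparametrization of the relaxed QR coefficients by right multiplication with a normalized upper-triangular factor; you compute the forward product $\bfA\bfA''$, whereas the paper computes $\bfA^{-1}\bfA'$, the inverse direction of the same substitution. One small discrepancy is worth flagging: your derivation of the second case correctly forces $A''_{\bfn\bfn}=1$ so that $[\bfA\bfA'']_{33}=A_{\bfn\bfn}$, which yields the argument $\bfF\bfA_{\bft_1,\bft_2,\bfn}(1,1,1,A''_{\bft_1\bfn},A''_{\bft_2\bfn},0)$ rather than the $\bfF\bfA_{\bft_1,\bft_2,\bfn}(A_{\bfn\bfn},1,1,\ldots)$ appearing in the proposition as printed; the printed version appears to be a misprint, since the paper's own substitution $A''_{\bfn\bfn}=A'_{\bfn\bfn}/A_{\bfn\bfn}$ likewise gives $A''_{\bfn\bfn}=1$ when $A'_{\bfn\bfn}$ is held at the QR value $A_{\bfn\bfn}$.
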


The following result shows the invariance properties of $\Wd$.
We recall that the group $\mathcal{S}$ of symmetries of $W$ is the set of $\bfQ \in SO(3)$ such that $W (\bfF \bfQ) = W (\bfF)$ for all $\bfF \in \R^{3 \times 3}_+$, and that a material is isotropic if $\mathcal{S} = SO(3)$.

\begin{proposition}\label{pr:invariance}
Let $\{ \bft_1, \bft_2 , \bfn \}$ be an orthonormal basis and let $\bfF \in \R^{3 \times 3}_+$.
Let $W \colon \R^{3 \times 3}_+ \to \R$ be continuous, frame-indifferent and satisfy \eqref{eq:Winfty}.
Then:
\begin{enumerate}[label=\alph*)]
\item\label{item:Wcompa2} $\Wd (\bfF, \bfn)$ does not depend on $\bft_1, \bft_2$.

\item\label{item:Wcompa3} $\Wd (\bfF, \bfn) = \Wd (\bfF, -\bfn)$

\item\label{item:Wcompa4} $\Wd (\bfQ \bfF, \bfn) = \Wd (\bfF, \bfn)$ for all $\bfQ \in SO(3)$.

\item\label{item:Wcompa45} If $\mathcal{S} \subset SO(3)$ is the group of symmetries of $W$ then $\Wd (\bfF \bfQ^T, \bfQ \bfn) = \Wd (\bfF, \bfn)$ for all $\bfQ \in \mathcal{S}$.

\item\label{item:Wcompa5} If $W$ is isotropic then $\Wd (\bfF \bfQ^T, \bfQ \bfn) = \Wd (\bfF, \bfn)$ for all $\bfQ \in SO(3)$.
\end{enumerate}
\end{proposition}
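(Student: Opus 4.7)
The plan is to prove all five invariance properties by working consistently with the alternative characterization provided by Proposition \ref{pr:Wdalt}, which expresses $\Wd(\bfF,\bfn)$ in terms of $\bfF$ acting on matrices of the form $\bfA_{\bft_1,\bft_2,\bfn}(A''_{\bfn\bfn},1,1,A''_{\bft_1\bfn},A''_{\bft_2\bfn},0)$. Writing out this matrix,
\[
 \bfA_{\bft_1,\bft_2,\bfn}(A''_{\bfn\bfn},1,1,A''_{\bft_1\bfn},A''_{\bft_2\bfn},0) = (\bfI - \bfn\otimes\bfn) + A''_{\bfn\bfn}\,\bfn\otimes\bfn + \bfv\otimes\bfn ,
\]
where $\bfv := A''_{\bft_1\bfn}\bft_1 + A''_{\bft_2\bfn}\bft_2$ ranges over $\bfn^\perp$ as $(A''_{\bft_1\bfn},A''_{\bft_2\bfn})$ ranges over $\R^2$. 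Since $\bfI-\bfn\otimes\bfn$ is intrinsic to $\bfn$, this already makes the dependence on $\bft_1,\bft_2$ an artefact of parameterization. The quantity $A_{\bfn\bfn}$ is also basis-free thanks to the closed form $A_{\bfn\bfn} = 1/|\bfF^{-T}\bfn|$ from Lemma \ref{le:Ann}, and the same applies to $A_{\bfn\bfn}^*$ since its definition \eqref{eq:Anns*} depends on $\bft_1,\bft_2$ only through the same minimization over $\bfv\in\bfn^\perp$. This establishes part \ref{item:Wcompa2}.

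For part \ref{item:Wcompa3}, we use \ref{item:Wcompa2} to pick a convenient orthonormal basis $\{\bft_1,\bft_2,-\bfn\}$ (up to sign of $\bft_2$ if we wish to preserve orientation); since $\bfn\otimes\bfn = (-\bfn)\otimes(-\bfn)$ and $\bfv\otimes\bfn = (-\bfv)\otimes(-\bfn)$, with $\bfv$ still ranging over $\bfn^\perp$, the admissible set of test matrices is unchanged, so both $A_{\bfn\bfn}^*$ and the infimum defining $\Wd$ coincide. For part \ref{item:Wcompa4}, I would simply observe that $|(\bfQ\bfF)^{-T}\bfn| = |\bfQ\bfF^{-T}\bfn| = |\bfF^{-T}\bfn|$ for $\bfQ\in SO(3)$, so $A_{\bfn\bfn}$ is unchanged, and frame-indifference $W(\bfQ\bfF\bfM) = W(\bfF\bfM)$ makes the whole minimization invariant.

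For parts \ref{item:Wcompa45} and \ref{item:Wcompa5}, the essential computation is that for $\bfQ\in SO(3)$, the set $\{\bfQ\bft_1,\bfQ\bft_2,\bfQ\bfn\}$ is an orthonormal basis and
\[
 \bfA_{\bfQ\bft_1,\bfQ\bft_2,\bfQ\bfn}(A''_{\bfn\bfn},1,1,A''_{\bft_1\bfn},A''_{\bft_2\bfn},0) = \bfQ\,\bfA_{\bft_1,\bft_2,\bfn}(A''_{\bfn\bfn},1,1,A''_{\bft_1\bfn},A''_{\bft_2\bfn},0)\,\bfQ^T ,
\]
which follows from the identity $(\bfQ\bfa)\otimes(\bfQ\bfb) = \bfQ(\bfa\otimes\bfb)\bfQ^T$. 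Therefore, under the substitution $\bfF \mapsto \bfF\bfQ^T$ and $\bfn \mapsto \bfQ\bfn$ (using the basis $\{\bfQ\bft_1,\bfQ\bft_2,\bfQ\bfn\}$, which is legitimate by \ref{item:Wcompa2}), the argument of $W$ in the minimization becomes $\bfF\bfQ^T\cdot\bfQ\bfA\bfQ^T = \bfF\bfA\bfQ^T$, and for $\bfQ\in\mathcal{S}$ (so $\bfQ^T\in\mathcal{S}$ as well), $W(\bfF\bfA\bfQ^T) = W(\bfF\bfA)$. The quantity $A_{\bfn\bfn}$ is again preserved since $|(\bfF\bfQ^T)^{-T}(\bfQ\bfn)| = |\bfQ\bfF^{-T}\bfQ^T\bfQ\bfn| = |\bfF^{-T}\bfn|$. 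This yields \ref{item:Wcompa45}, and \ref{item:Wcompa5} follows as the special case $\mathcal{S} = SO(3)$.

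The main obstacle I anticipate is bookkeeping rather than substance: one must verify that each invariance preserves not only the infimum in Proposition \ref{pr:Wdalt} but also the threshold $A_{\bfn\bfn}^*$, so that the two branches in the definition of $\Wd$ are consistently interchanged; this needs to be checked for both \ref{item:Wcompa4} and \ref{item:Wcompa45}, but in each case the same symmetry arguments that work for the minimum also work for the infimum appearing in the definition of $A_{\bfn\bfn}^*$, so the branch condition $A_{\bfn\bfn} \gtrless A_{\bfn\bfn}^*$ is preserved.
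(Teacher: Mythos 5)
Your proof is correct and follows essentially the same route as the paper (working through Proposition \ref{pr:Wdalt}, tracking the invariance of $A_{\bfn\bfn}$ via Lemma \ref{le:Ann}, of $A_{\bfn\bfn}^*$, and of the minimization set). Your basis-free rewriting $\bfA_{\bft_1,\bft_2,\bfn}(A''_{\bfn\bfn},1,1,A''_{\bft_1\bfn},A''_{\bft_2\bfn},0)=(\bfI-\bfn\otimes\bfn)+A''_{\bfn\bfn}\bfn\otimes\bfn+\bfv\otimes\bfn$ with $\bfv\in\bfn^\perp$ is a slightly tidier way to dispatch parts \ref{item:Wcompa2} and \ref{item:Wcompa3} than the paper's explicit change-of-basis matrix, but it is the same underlying argument.
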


Of course, this proposition describes desirable properties for an effective energy $\Wd$: properties \ref{item:Wcompa2}--\ref{item:Wcompa3} show that $\Wd$ depends on the direction perpendicular to the crack (and not on the sense or the other elements of the orthonormal basis), while property \ref{item:Wcompa4} expresses its frame-indifference; see \eqref{eq:Wdindifference} and \eqref{eq:Wdinvariance}.
Properties \ref{item:Wcompa45}--\ref{item:Wcompa5} express that the symmetries of $W$ are transferred to $\Wd$, but with a caveat: when the deformation gradient $\bfF$ changes to $\bfF \bfQ^T$ then normal $\bfn$ has to change to $\bfQ \bfn$.
Indeed, one of the original motivations for $\Wd$ was that the isotropy of $W$ is broken when the deformation gradient $\bfF$ changes to $\bfF \bfQ^T$ and the normal $\bfn$ remains unchanged.

%%%%%%%%%%%%%%%%%%%%%%%%%%%%%%%%%%
%%%%%%%%%%%%%%%%%%%%%%%%%%%%%%%%%%
%%%%%%%%%%%%%%%%%%%%%%%%%%%%%%%%%%
%%%%%%%%%%%%%%%%%%%%%%%%%%%%%%%%%%
\subsection{Low Energy Crack Orientations from the Effective Energy}\label{subse:multiple} 

Typically, we expect cracks to be oriented such that the crack normal is aligned with the local dominant tensile direction.
In our model, this is achieved if, given $\bfF$, we have that $\Wd\left(\bfF,\bfn\right)$ is minimized for $\bfn$ aligned with the dominant tensile direction.
We examine this question here, and find that the expected direction is a global minimum but there exist other local minima.

In order to simplify the exposition, we work in dimension $2$.
In Section \ref{subse:NeoH2D} we will compute the effective energy $\Wd$ of the stored energy
\[
 W (\bfF) = \frac{\mu}{2} \left( |\bfF|^2 -2 - 2 \log \det \bfF \right) + \frac{\lambda}{2} \left( \det \bfF -1 \right)^2 .
\]

For a given $\bfF$, we parametrize $\bfn = (\cos \theta, \sin \theta)$ for $0 \leq \theta \leq \pi$, because of the invariance of Proposition \ref{pr:invariance}.\ref{item:Wcompa3}.
We plot $\Wd (\bfF, \bfn)$ against $\theta$.
The results, for the following two choices of $\bfF$ are as follows.

In the first case we let
\[
 \bfF = \begin{pmatrix}
 1 & 0 \\
 0 & F_{22}
 \end{pmatrix}
\]
with $F_{22} \geq 1$, which represents an extension perpendicular to the crack when $\bfn = \bfe_2$, and expect that the minimum of $\Wd (\bfF, \bfn)$ is only attained at $\bfn = \bfe_2$.
Particularizing the formulas of Section \ref{subse:NeoH2D} for this $\bfF$, we obtain that, when we define
\[
 A_{11} = \sqrt{n_2^2 + F_{22}^2 n_1^2} , \qquad 
 A_{22}^* = \frac{ \lambda A_{11} + \sqrt{4 \mu^2 + 4 \mu \lambda  A_{11}^2 + \lambda^2 A_{11}^2}}{2 (\mu + \lambda A_{11}^2)} ,
\]
the formula for the relaxed energy is
\[
 \Wd (\bfF, \bfn) = 
 \frac{\mu}{2} \left( A_{11}^2 + (A_{22}^*)^2 -2 - 2 \log (A_{11} A_{22}^*) \right) + \frac{\lambda}{2} \left( A_{11} A_{22}^* -1 \right)^2 .
\]
Numerically, one can check that the only minimum is attained at $\bfn = \bfe_2$, as expected.
See Figure \ref{fi:oneminimum}, left, for the graph of $\Wd (\bfF, \bfn)/\mu$ as a function of $\theta$, with $F_{22} = \frac{3}{2}$ and $\lambda = \mu = 1$.
\begin{figure}
    \includegraphics[width=.4\textwidth]{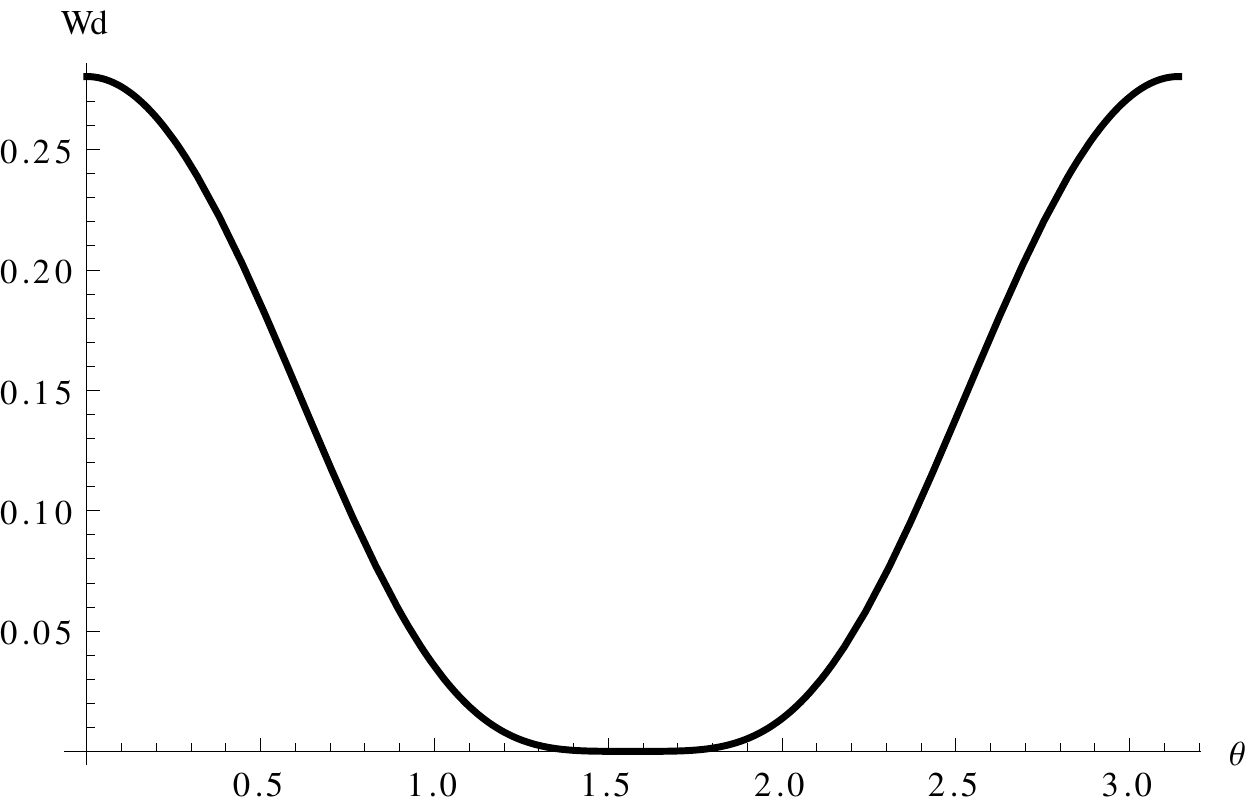} \hfill
    \includegraphics[width=.4\textwidth]{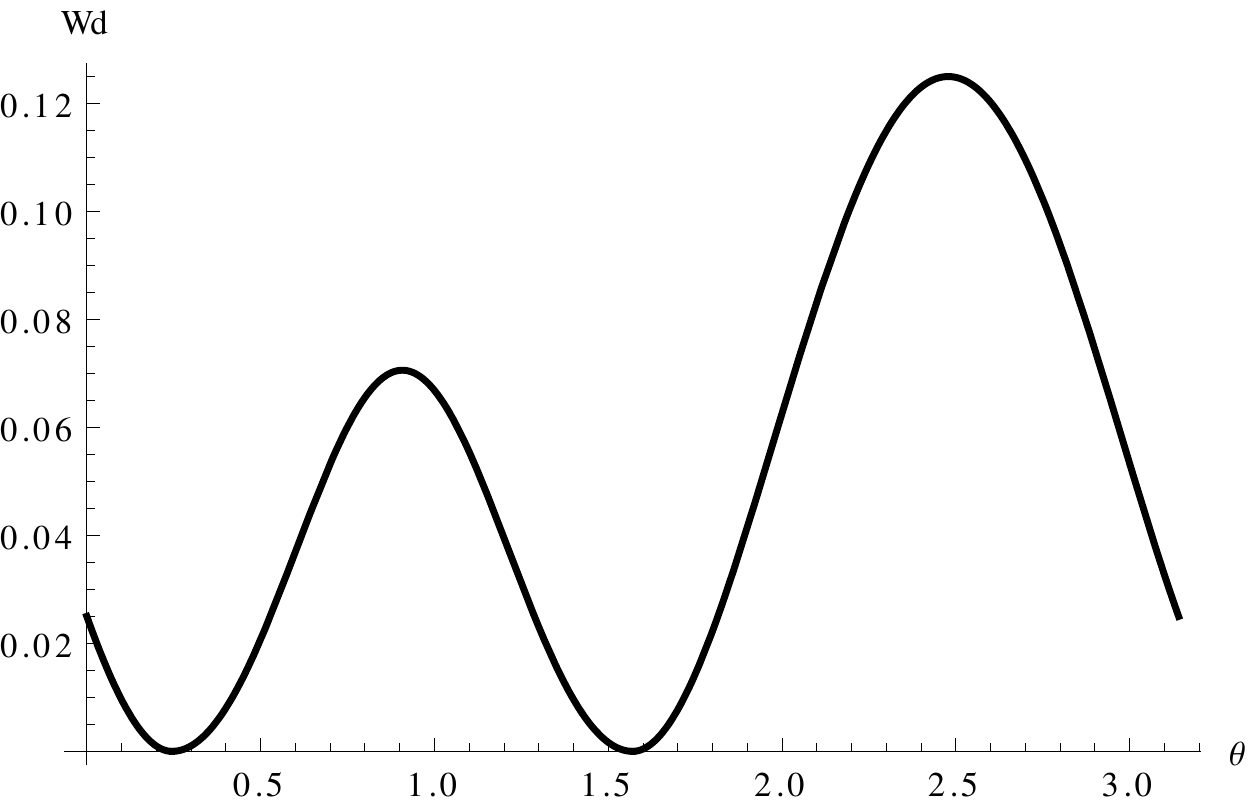}
    \caption{Graph of $\Wd (\bfF, \bfn)/\mu$ as a function of $\theta$, with $\lambda = \mu = 1$ and $\bfn = (\cos \theta, \sin \theta)$.
    Left: $\bfF = \bfI + \frac{1}{2} \bfe_2 \otimes \bfe_2$; the minimum is attained at $\theta = \frac{\pi}{2}$.
    Right: $\bfF = \bfI + \frac{1}{2} \bfe_1 \otimes \bfe_2$; the minimum is attained at $\theta = \frac{\pi}{2}$ and at another value depending on $F_{12}$.
    \label{fi:oneminimum}}
\end{figure}

In the second example we let
\[
 \bfF = \begin{pmatrix}
 1 & F_{12} \\
 0 & 1
 \end{pmatrix}
\]
with $F_{12} \in \R$, which represents a shear parallel to the crack when $\bfn = \bfe_2$, and expect that the minimum of $\Wd (\bfF, \bfn)$ is only attained at $\bfn = \bfe_2$.
Particularizing the formulas of Section \ref{subse:NeoH2D} for this $\bfF$, we obtain that, when we define
\[
  A_{11} = \sqrt{1 - 2 F_{12} n_1 n_2 + F_{12}^2 n_1^2} , \qquad A_{22} = \frac{1}{A_{11}} , \qquad
 A_{22}^* = \frac{ \lambda A_{11} + \sqrt{4 \mu^2 + 4 \mu \lambda  A_{11}^2 + \lambda^2 A_{11}^2}}{2 (\mu + \lambda A_{11}^2)} ,
\]
the formula for the relaxed energy is
\[
 \Wd (\bfF, \bfn) = \begin{cases}
 \frac{\mu}{2} \left( A_{11}^2 + (A_{22}^*)^2 -2 - 2 \log (A_{11} A_{22}^*) \right) + \frac{\lambda}{2} \left( A_{11} A_{22}^* -1 \right)^2 & \text{if } A_{22} > A_{22}^* , \\
 \frac{\mu}{2} \left( A_{11}^2 + A_{22}^2 -2 \right) & \text{if } A_{22} \leq A_{22}^* .
 \end{cases}
\]
Numerically, one can check that it has two minima: one located at $\bfn = \bfe_2$, as expected, and the other one depending on $F_{12}$ but not on $\lambda$ or $\mu$.
Figure \ref{fi:oneminimum}, right, shows the graph of $\Wd (\bfF, \bfn)/\mu$ as a function of $\theta$ with $F_{12} = \frac{1}{2}$ and $\lambda = \mu = 1$.

%%%%%%%%%%%%%%%%%%%%%%%%%%%%%%%%%%%%%%%%%
%%%%%%%%%%%%%%%%%%%%%%%%%%%%%%%%%%%%%%%%%
%%%%%%%%%%%%%%%%%%%%%%%%%%%%%%%%%%%%%%%%%
%%%%%%%%%%%%%%%%%%%%%%%%%%%%%%%%%%%%%%%%%
\section{The Classical Crack Face Traction Condition}\label{se:W1stress}

The classical crack face traction conditions for a smooth frictionless crack are that the shear traction is zero and that the normal traction is either zero (when the crack is open) or compressive (when the crack is closed).
To this, we append the natural condition that the compressive normal traction, when the crack is closed, is identical to that in the intact material under the same deformation.
The key question in this section is to understand if the proposed energy $\Wd$ satisfies these traction conditions. 
In summary, we find that it does so for some but not all materials.

In terms of the (Piola--Kirchhoff) stress $\bfT^{\mathrm{d}}$ in the damaged material, and the stress $\bfT$ in the intact material, the classical crack face conditions can be written as
\begin{equation}
\label{eqn:classical-traction}
    T_{\bft_1 \bfn}^{\mathrm{d}} = T_{\bft_2 \bfn}^{\mathrm{d}} = 0  
    \quad 
    \text{and} \quad T_{\bfn \bfn}^{\mathrm{d}} = \min \{ T_{\bfn \bfn} , 0 \} .
\end{equation}
The notation for subindices is so that $T^{\mathrm{d}}_{\bft_i \bfn}$ indicates the shear components of the traction, while $T^{\mathrm{d}}_{\bfn \bfn}$ denotes the normal component of the traction, and analogously for $\bfT$.
Of course, the stresses are the derivatives of the corresponding elastic energy densities.

A potential strategy to construct $\Wd$ that satisfies \eqref{eqn:classical-traction} could be to simply use \eqref{eqn:classical-traction} as the starting point, and integrate appropriately to construct the corresponding stored energy density.
By QR decomposition and frame-indifference, in order for \eqref{eqn:classical-traction} to be satisfied for all deformation gradients, it is enough that it holds for those of the form \eqref{eq:At1t2n}.
Standard aguments based on the symmetry of the second derivative show that, given $W$ (hence $\bfT$), an energy $\Wd$ exists satisfying \eqref{eqn:classical-traction} if and only if
\begin{equation}\label{eq:compatibility}
     \frac{\partial^2}{\partial A_{\bfn \bfn} \, \partial A_{\bft_i \bfn}} W \left( \bfA_{\bft_1, \bft_2, \bfn} (A_{\bfn \bfn}, A_{\bft_1 \bft_1}, A_{\bft_2 \bft_2}, A_{\bft_1 \bfn}, A_{\bft_2 \bfn}, A_{\bft_1 \bft_2}) \right) = 0 , \qquad i = 1, 2.
\end{equation}

%%%%%%%%%%%%%%%%%%%%%%%%%%%%%%%%%%%%%%%%%
%%%%%%%%%%%%%%%%%%%%%%%%%%%%%%%%%%%%%%%%%
%%%%%%%%%%%%%%%%%%%%%%%%%%%%%%%%%%%%%%%%%
%%%%%%%%%%%%%%%%%%%%%%%%%%%%%%%%%%%%%%%%%
\subsection{The Traction Condition is Generally Incompatible with a Stored Energy Density}

An interpretation of condition \eqref{eq:compatibility} is that the dependence of $W$ on the $A_{\bfn \bfn}$ and $A_{\bft_i \bfn}$ components is somewhat uncoupled.
This condition is not satisfied for every $W$ but it is satisfied, for example, for Mooney--Rivlin materials.
Indeed, let the stored energy density $W$ take the form
\[
 W(\bfF) = a \left| \bfF \right|^2 + b \left| \cof \bfF \right|^2 + h (\det \bfF) 
\]
with $a, b >0$ and any function $h$.
Then, with respect to any orthonormal basis $\{ \bft_1, \bft_2, \bfn \}$,
\begin{align*}
 & W \left( \bfA_{\bft_1, \bft_2, \bfn} (A_{\bfn \bfn}, A_{\bft_1 \bft_1}, A_{\bft_2 \bft_2}, A_{\bft_1 \bfn}, A_{\bft_2 \bfn}, A_{\bft_1 \bft_2}) \right) \\
 & = a \left( A_{\bft_1 \bft_1}^2 + A_{\bft_2 \bft_2}^2 + A_{\bfn \bfn}^2 + A_{\bft_1 \bft_2}^2 + A_{\bft_1 \bfn}^2 + A_{\bft_2 \bfn}^2 \right) \\
 & \quad + b \left( A_{\bft_1 \bft_1}^2 A_{\bfn \bfn}^2 + A_{\bft_2 \bft_2}^2 A_{\bfn \bfn}^2 + A_{\bft_1 \bft_1}^2 A_{\bft_2 \bft_2}^2 + A_{\bft_1 \bft_1}^2 A_{\bft_2 \bfn}^2 + A_{\bfn \bfn}^2 A_{\bft_1 \bft_2}^2 + \left( A_{\bft_2 \bft_2} A_{\bft_1 \bfn} - A_{\bft_2 \bfn} A_{\bft_1 \bft_2} \right)^2 \right) \\
 & \quad + h (A_{\bft_1 \bft_1} A_{\bft_2 \bft_2} A_{\bfn \bfn}) ,
\end{align*}
which is readily seen to satisfy \eqref{eq:compatibility}.

On the other hand, if we consider exponents $p, q >0$ with $(p, q) \neq (2,2)$ then it is easy to check that the energy
\[
 W(\bfF) = a \left| \bfF \right|^p + b \left| \cof \bfF \right|^q + h (\det \bfF) 
\]
does not satisfy \eqref{eq:compatibility}.
In summary, it shows that, for general energies, there do not exist effective crack energy densities that both satisfy the crack face traction conditions and have the correct intact response when the crack closes.

%%%%%%%%%%%%%%%%%%%%%%%%%%%%%%%%%%%%%%%%%
%%%%%%%%%%%%%%%%%%%%%%%%%%%%%%%%%%%%%%%%%
%%%%%%%%%%%%%%%%%%%%%%%%%%%%%%%%%%%%%%%%%
%%%%%%%%%%%%%%%%%%%%%%%%%%%%%%%%%%%%%%%%%
\subsection{Traction on the crack face satisfied by the effective energy}

In this subsection we calculate the tractions $T_{\bft_i \bfn}^{\mathrm{d}}$ and $T_{\bfn \bfn}^{\mathrm{d}}$ for the effective energy constructed in Definition \ref{de:Wd}, and compare them with \eqref{eqn:classical-traction}.

Let $A_{\bfn \bfn}, A_{\bft_1 \bft_1}, A_{\bft_2 \bft_2} >0$ and $A_{\bft_1 \bfn}, A_{\bft_2 \bfn}, A_{\bft_1 \bft_2} \in \R$.
From Definition \ref{de:Wd} we have
\begin{align*}
 & \Wd \left( \bfA_{\bft_1, \bft_2, \bfn} (A_{\bfn \bfn}, A_{\bft_1 \bft_1}, A_{\bft_2 \bft_2}, A_{\bft_1 \bfn}, A_{\bft_2 \bfn}, A_{\bft_1 \bft_2}), \bfn \right) \\
 & = \begin{cases} W \left( \bfA_{\bft_1, \bft_2, \bfn} (A^*_{\bfn \bfn}, A_{\bft_1 \bft_1}, A_{\bft_2 \bft_2}, A^*_{\bft_1 \bfn}, A^*_{\bft_2 \bfn}, A_{\bft_1 \bft_2}) \right) , & \text{if } A_{\bfn \bfn} \geq A_{\bfn \bfn}^* , \\
  W \left( \bfA_{\bft_1, \bft_2, \bfn} (A_{\bfn \bfn}, A_{\bft_1 \bft_1}, A_{\bft_2 \bft_2}, A^{**}_{\bft_1 \bfn}, A^{**}_{\bft_2 \bfn}, A_{\bft_1 \bft_2}) \right) , & \text{if } A_{\bfn \bfn} < A_{\bfn \bfn}^* ,
 \end{cases}
\end{align*}
for some functions
\begin{align*}
 & A^*_{\bfn \bfn} = A^*_{\bfn \bfn} (A_{\bft_1 \bft_1}, A_{\bft_2 \bft_2}, A_{\bft_1 \bft_2}), \qquad A^*_{\bft_i \bfn} = A^*_{\bft_i \bfn} (A_{\bft_1 \bft_1}, A_{\bft_2 \bft_2}, A_{\bft_1 \bft_2}) \quad (i=1,2), \\
 & A^{**}_{\bft_i \bfn} = A^{**}_{\bft_i \bfn} (A_{\bfn \bfn}, A_{\bft_1 \bft_1}, A_{\bft_2 \bft_2}, A_{\bft_1 \bft_2}) \quad (i=1,2) .
\end{align*}
Assume, for simplicity, that the functions $A^*_{\bfn \bfn}, A^*_{\bft_i \bfn}, A^{**}_{\bft_i \bfn}$ can be defined uniquely, at least locally; we will see in Section \ref{se:ExampleGeneral} an example of this situation.
The derivatives of $\Wd$ in the directions $(\bft_i , \bfn)$ and $(\bfn , \bfn)$ are
\begin{equation}\label{eq:tractionWdtn}
 \parderiv{\Wd}{A_{\bft_i \bfn}} \left( \bfA_{\bft_1, \bft_2, \bfn} (A_{\bfn \bfn}, A_{\bft_1 \bft_1}, A_{\bft_2 \bft_2}, A_{\bft_1 \bfn}, A_{\bft_2 \bfn}, A_{\bft_1 \bft_2}), \bfn \right) 
 = 0, \qquad i=1,2 
\end{equation}
and
\begin{equation}\label{eq:tractionWdnn}
\begin{split}
 & \parderiv{\Wd}{A_{\bfn \bfn}} \left( \bfA_{\bft_1, \bft_2, \bfn} (A_{\bfn \bfn}, A_{\bft_1 \bft_1}, A_{\bft_2 \bft_2}, A_{\bft_1 \bfn}, A_{\bft_2 \bfn}, A_{\bft_1 \bft_2}), \bfn \right) \\
 & = \begin{cases} 0 & \text{if } A_{\bfn \bfn} > A_{\bfn \bfn}^* , \\
 \parderiv{W}{A_{\bfn \bfn}} \left( \bfA_{\bft_1, \bft_2, \bfn} (A_{\bfn \bfn}, A_{\bft_1 \bft_1}, A_{\bft_2 \bft_2}, A^{**}_{\bft_1 \bfn}, A^{**}_{\bft_2 \bfn}, A_{\bft_1 \bft_2}) \right) & \text{if } A_{\bfn \bfn} < A_{\bfn \bfn}^* ,
 \end{cases}
\end{split}
\end{equation}
since
\[
 \parderiv{W}{A_{\bft_i \bfn}} \left( \bfA_{\bft_1, \bft_2, \bfn} (A_{\bfn \bfn}, A_{\bft_1 \bft_1}, A_{\bft_2 \bft_2}, A^{**}_{\bft_1 \bfn}, A^{**}_{\bft_2 \bfn}, A_{\bft_1 \bft_2}) \right) = 0 , \qquad i=1,2
\]
because $A^{**}_{\bft_1 \bfn}, A^{**}_{\bft_2 \bfn}$ are minimizers of $W \left( \bfA_{\bft_1, \bft_2, \bfn} (A_{\bfn \bfn}, A_{\bft_1 \bft_1}, A_{\bft_2 \bft_2}, A_{\bft_1 \bfn}, A_{\bft_2 \bfn}, A_{\bft_1 \bft_2}) \right)$.

We compare these results with the crack frace conditions \eqref{eqn:classical-traction}.
Let $\bfT$ and $\bfT^{\mathrm{d}}$ be as in the beginning of this section.
Formulas \eqref{eq:tractionWdtn} and \eqref{eq:tractionWdnn} show that, for strains of the form $\bfA_{\bft_1, \bft_2, \bfn}$ (see \eqref{eq:At1t2n}),
\begin{equation}\label{eq:traction1}
 T_{\bft_1 \bfn}^{\mathrm{d}} = T_{\bft_2 \bfn}^{\mathrm{d}} = 0
\end{equation}
and
\begin{equation}\label{eq:traction2}
 T_{\bfn \bfn}^{\mathrm{d}} = \begin{cases} 0 & \text{if } A_{\bfn \bfn} > A_{\bfn \bfn}^* , \\
 T_{\bfn \bfn} \left( \bfA_{\bft_1, \bft_2, \bfn} (A_{\bfn \bfn}, A_{\bft_1 \bft_1}, A_{\bft_2 \bft_2}, A^{**}_{\bft_1 \bfn}, A^{**}_{\bft_2 \bfn}, A_{\bft_1 \bft_2}) \right) & \text{if } A_{\bfn \bfn} < A_{\bfn \bfn}^* .
 \end{cases}
\end{equation}
As we can see, \eqref{eq:traction1} corresponds to the first part of \eqref{eqn:classical-traction}, but in general,  \eqref{eq:traction2} differs from the second part of \eqref{eqn:classical-traction}.
Nevertheless, in the particular case that 
\begin{equation}\label{eq:particular}
 \begin{cases}
 0 \leq T_{\bfn \bfn} & \text{if } A_{\bfn \bfn} > A_{\bfn \bfn}^* , \\
  T_{\bfn \bfn} \left( \bfA_{\bft_1, \bft_2, \bfn} (A_{\bfn \bfn}, A_{\bft_1 \bft_1}, A_{\bft_2 \bft_2}, A^{**}_{\bft_1 \bfn}, A^{**}_{\bft_2 \bfn}, A_{\bft_1 \bft_2}) \right) = T_{\bfn \bfn} \leq 0 & \text{if } A_{\bfn \bfn} < A_{\bfn \bfn}^* ,
 \end{cases}
\end{equation}
we have that conditions \eqref{eq:traction1}--\eqref{eq:traction2} are equivalent to \eqref{eqn:classical-traction}.

Let us have a closer look to \eqref{eq:particular}.
As $A_{\bfn \bfn}^*, A^*_{\bft_1 \bfn}, A^*_{\bft_2 \bfn}$ are minimizers of $W \left( \bfA_{\bft_1, \bft_2, \bfn} \right)$ (where $\bfA_{\bft_1, \bft_2, \bfn}$ is as in \eqref{eq:At1t2n}), we have that
\[
 T_{\bfn \bfn} \left( \bfA_{\bft_1, \bft_2, \bfn} (A^*_{\bfn \bfn}, A_{\bft_1 \bft_1}, A_{\bft_2 \bft_2}, A^*_{\bft_1 \bfn}, A^*_{\bft_2 \bfn}, A_{\bft_1 \bft_2}) \right) = 0 .
\]
A natural condition would then be that
\[
 T_{\bfn \bfn} \left( \bfA_{\bft_1, \bft_2, \bfn} (A_{\bfn \bfn}, A_{\bft_1 \bft_1}, A_{\bft_2 \bft_2}, A^*_{\bft_1 \bfn}, A^*_{\bft_2 \bfn}, A_{\bft_1 \bft_2}) \right) \geq 0 \quad \text{when } A_{\bfn \bfn} > A^*_{\bfn \bfn} ,
\]
and
\[
 T_{\bfn \bfn} \left( \bfA_{\bft_1, \bft_2, \bfn} (A_{\bfn \bfn}, A_{\bft_1 \bft_1}, A_{\bft_2 \bft_2}, A^*_{\bft_1 \bfn}, A^*_{\bft_2 \bfn}, A_{\bft_1 \bft_2}) \right) \leq 0 \quad \text{when } A_{\bfn \bfn} < A^*_{\bfn \bfn} ,
\]
while only in the few situations where the expression of the energy $W$ has an uncoupled dependence on the terms $A_{\bfn \bfn}, A_{\bft_1 \bfn}, A_{\bft_2 \bfn}$ we additionally have that
\[
  T_{\bfn \bfn} \left( \bfA_{\bft_1, \bft_2, \bfn} (A_{\bfn \bfn}, A_{\bft_1 \bft_1}, A_{\bft_2 \bft_2}, A_{\bft_1 \bfn}, A_{\bft_2 \bfn}, A_{\bft_1 \bft_2}) \right) \geq 0 \quad \text{when } A_{\bfn \bfn} > A^*_{\bfn \bfn} ,
\]
and
\begin{align*}
 & T_{\bfn \bfn} \left( \bfA_{\bft_1, \bft_2, \bfn} (A_{\bfn \bfn}, A_{\bft_1 \bft_1}, A_{\bft_2 \bft_2}, A^{**}_{\bft_1 \bfn}, A^{**}_{\bft_2 \bfn}, A_{\bft_1 \bft_2}) \right) \\
 & = T_{\bfn \bfn} \left( \bfA_{\bft_1, \bft_2, \bfn} (A_{\bfn \bfn}, A_{\bft_1 \bft_1}, A_{\bft_2 \bft_2}, A_{\bft_1 \bfn}, A_{\bft_2 \bfn}, A_{\bft_1 \bft_2}) \right) \leq 0  \quad \text{when } A_{\bfn \bfn} < A^*_{\bfn \bfn} .
\end{align*}
We thus recover the interpretation that \eqref{eq:compatibility} shows that $W$ has an uncoupled dependence of $A_{\bfn \bfn}$ and $A_{\bft_i \bfn}$.

%%%%%%%%%%%%%%%%%%%%%%%%%%%%%%%%%%%%%%%%%
%%%%%%%%%%%%%%%%%%%%%%%%%%%%%%%%%%%%%%%%%
%%%%%%%%%%%%%%%%%%%%%%%%%%%%%%%%%%%%%%%%%
%%%%%%%%%%%%%%%%%%%%%%%%%%%%%%%%%%%%%%%%%

\section{Example material: Mooney--Rivlin}\label{se:Mooney}

In this section we compute the effective energy for a Mooney--Rivlin material, in both 2D and 3D. 

\subsection{Neo-Hookean material in 2D}\label{subse:NeoH2D}

We consider the neo-Hookean energy in 2D
\begin{equation}\label{eq:WneoH}
 W (\bfF) = \frac{\mu}{2} \left( |\bfF|^2 -2 - 2 \log \det \bfF \right) + \frac{\lambda}{2} \left( \det \bfF -1 \right)^2 ,
\end{equation}
for $\mu , \lambda >0$.
Note that the minimum value of $W$ is zero and is attained at $SO(2)$.
Moreover, it is easy to check that the elasticity tensor $\C = D^2 W (\bfI)$ is given by
\begin{equation}\label{eq:isolinear}
 \C \bfvareps : \bfvareps = 2 \mu |\bfvareps|^2 + \lambda  (\tr \bfvareps)^2
\end{equation}
for symmetric $\bfvareps$, so $\lambda$ and $\mu$ are the Lam\'e parameters.

We compute $\Wd$ according to Definition \ref{de:Wd} with the obvious modifications for 2D.
As a consequence of Proposition \ref{pr:invariance}, it is enough to calculate $\Wd (\bfA, \bfe_2)$ for upper triangular matrices $\bfA$ with positive diagonal elements.
The orthonormal basis chosen will be, of course, $\{ \bfe_1, \bfe_2 \}$.
Since it is the canonical basis, we will use the usual triangular representation of a matrix, instead of the notation $A_{22} \bfe_2 \otimes \bfe_2 + A_{11} \bfe_1 \otimes \bfe_1  + A_{12} \bfe_1 \otimes \bfe_2$.

For $A_{11}, A_{22} >0$ and $A_{12} \in \R$, we have
\[
 W \begin{pmatrix} A_{11} & A_{12} \\
 0 & A_{22}
 \end{pmatrix} = \frac{\mu}{2} \left( A_{11}^2 + A_{12}^2 + A_{22}^2 - 2 - 2 \log (A_{11} A_{22}) \right) + \frac{\lambda}{2} (A_{11} A_{22} - 1)^2  .
\]
Given $A_{11} > 0$ the minimum of the above expression in $A_{22} >0$ and $A_{12} \in \R$ is easily seen to be attained at $A^*_{12} = 0$ and
\begin{equation}\label{eq:A22*}
 A_{22}^* (A_{11}) := \frac{ \lambda A_{11} + \sqrt{4 \mu^2 + 4 \mu \lambda  A_{11}^2 + \lambda^2 A_{11}^2}}{2 (\mu + \lambda A_{11}^2)} ,
\end{equation}
while given $A_{11} , A_{22}> 0$ the minimum in $A_{12} \in \R$ is easily seen to be attained at $A^{**}_{12} = 0$.
For future reference, note that
\begin{equation}\label{eq:A22*A11}
 A_{22}^* (A_{11}) > 1 \quad \text{if and only if} \quad A_{11} < 1 .
\end{equation}
The expression for $\Wd$ is, therefore,
\begin{equation}\label{eq:WdMR}
 \Wd \left( \begin{pmatrix}
 A_{11} & A_{12} \\
 0 & A_{22} 
  \end{pmatrix} , \bfe_2 \right) = \begin{cases}
 W \begin{pmatrix}
 A_{11} & 0 \\
 0 & A_{22}^* 
  \end{pmatrix} , & \text{if } A_{22} > A_{22}^* , \\
  W \begin{pmatrix}
 A_{11} & 0 \\
 0 & A_{22} 
  \end{pmatrix} , & \text{if } A_{22} \leq A_{22}^* .
 \end{cases}
\end{equation}
The values of $\Wd (\bfF, \bfn)$ for any $(\bfF, \bfn)$ can be calculated with the formula above and the relations
\begin{equation}\label{eq:Wdinvariance2D}
 \Wd (\bfQ \bfF, \bfn) = \Wd (\bfF, \bfn) = \Wd (\bfF \bfQ^T , \bfQ \bfn) , \qquad \bfQ \in SO(2) .
\end{equation}
(see Proposition \ref{pr:invariance} and note that $W$ is isotropic).
More explicitly, we consider, for any unit vector $\bfn$,
\begin{equation}\label{eq:Q2D}
 \bfQ = \begin{pmatrix}
 n_2 & -n_1 \\
 n_1 & n_2
 \end{pmatrix} ,
\end{equation}
which satisfies $\bfQ \in SO(2)$ and $\bfQ \bfn = \bfe_2$.
Then $\Wd (\bfF, \bfn) = \Wd (\bfF \bfQ^T, \bfe_2)$.
We apply QR decomposition to $\bfG := \bfF \bfQ^T$ and obtain that $\bfG = \bfR \bfA$ with $\bfR \in SO(2)$ and $\bfA$ upper triangular.
Elementary calculations show that
\[
 \bfA = \begin{pmatrix}
 A_{11} & A_{12} \\
 0 & A_{22}
 \end{pmatrix}
\]
with
\begin{equation}\label{eq:formulaspolar}
 A_{11} = \sqrt{G_{11}^2 + G_{21}^2} = \sqrt{(F_{11} n_2 - F_{12} n_1)^2 + (F_{21} n_2 - F_{22} n_1)^2} , \qquad A_{22} = \frac{\det \bfF}{A_{11}} ,
\end{equation}
while the expression for $A_{12}$ is not important.
Let $A_{22}^*$ be as in \eqref{eq:A22*}.
Then
\begin{equation}
  \boxed{
  \Wd (\bfF, \bfn) = \begin{cases}
 \frac{\mu}{2} \left( A_{11}^2 + (A_{22}^*)^2 -2 - 2 \log (A_{11} A_{22}^*) \right) + \frac{\lambda}{2} \left( A_{11} A_{22}^* -1 \right)^2 & \text{if } A_{22} > A_{22}^* , \\
  \frac{\mu}{2} \left( A_{11}^2 + A_{22}^2 -2 - 2 \log (A_{11} A_{22}) \right) + \frac{\lambda}{2} \left( A_{11} A_{22} -1 \right)^2 & \text{if } A_{22} \leq A_{22}^* .
 \end{cases}
 }
\end{equation}

Using formulas \eqref{eq:A22*} and \eqref{eq:WdMR}, we compute the effective energy $\Wd$ of the basic modes \ref{item:eP}--\ref{item:cP} described in Section \ref{subse:QR}, with the obvious modifications for 2D, in the special case $\bfn = \bfe_2$ and $\bft = \bfe_1$:
\begin{enumerate}[label=(\alph*)]
\item
$\bfF = A_{22} \bfe_2 \otimes \bfe_2 + \bfe_1 \otimes \bfe_1$ with $A_{22} >1 1$.
Then
\[
 \Wd \left( \begin{pmatrix} 1 & 0 \\
 0 & A_{22}
 \end{pmatrix} , \bfe_2 \right) = W \begin{pmatrix} 1 & 0 \\
 0 & 1
 \end{pmatrix} = 0 .
\]

\item 
$\bfF = \bfI + A_{12} \bfe_1 \otimes \bfe_2$ with $A_{12} \in \R$.
Then
\[
 \Wd \left( \begin{pmatrix} 1 & A_{12} \\
 0 & 1
 \end{pmatrix} , \bfe_2 \right) = W \begin{pmatrix} 1 & 0 \\
 0 & 1
 \end{pmatrix} = 0 .
\]
\item
$\bfF = \bfe_2 \otimes \bfe_2 + A_{11} \bfe_1 \otimes \bfe_1$ with $0 < A_{11} <1$.
 Then, thanks to \eqref{eq:A22*A11},
 \[
 \Wd \left( \begin{pmatrix}
 A_{11} & 0 \\
 0 & 1 
  \end{pmatrix}  , \bfe_2 \right) = 
  W \begin{pmatrix}
 A_{11} & 0 \\
 0 & 1 
  \end{pmatrix} .
\]

\item
$\bfF = \bfe_2 \otimes \bfe_2 + A_{11} \bfe_1 \otimes \bfe_1$ with $A_{11} > 1$.
 Then, thanks to \eqref{eq:A22*A11},
 \[
 \Wd \left( \begin{pmatrix}
 A_{11} & 0 \\
 0 & 1 
  \end{pmatrix}  , \bfe_2 \right) = 
 W \begin{pmatrix}
 A_{11} & 0 \\
 0 & A_{22}^* 
  \end{pmatrix}  .
\]
It is interesting to note that, contrary to mode \ref{item:ccT}, in this mode the energy of the cracked material is less than that of the intact material.
Under extension parallel to the crack, the damaged material undergoes -- in addition to an extension of magnitude $A_{11} >1$ parallel to the crack -- a compression of magnitude $A_{22}^* <1$ perpendicular to the crack due to the transverse shrinkage, because the material has a positive Poisson ratio.

\item
$\bfF = A_{22} \bfe_2 \otimes \bfe_2 + \bfe_1 \otimes \bfe_1$ with $0 < A_{22} < 1$.
Then
\[
 \Wd \left( \begin{pmatrix} 1 & 0 \\
 0 & A_{22}
 \end{pmatrix}  , \bfe_2 \right) =W \begin{pmatrix} 1 & 0 \\
 0 & A_{22}
 \end{pmatrix} .
\]

\end{enumerate}

These examples \ref{item:eP}--\ref{item:cP} show that the effective energy meet our expectations.
Now we calculate the effective energy of two other representative modes, for which an intermediate behavior is presented: $\Wd$ will carry some energy but less than $W$, i.e., $0 < \Wd < W$.

\begin{enumerate}[label=(\alph*),resume]

\item Isotropic compression with shear parallel to the crack:
$\bfF = \alpha \bfI + A_{12} \bfe_1 \otimes \bfe_2$ with $0 < \alpha < 1$ and $A_{12} \in \R$.
Then
\[
 \Wd \left( \begin{pmatrix} \alpha & A_{12} \\
 0 & \alpha
 \end{pmatrix}  , \bfe_2 \right) = W \begin{pmatrix} \alpha & 0 \\
 0 & \alpha
 \end{pmatrix} .
\]

\item Isotropic compression with shear perpendicular to the crack:
$\bfF = \alpha \bfI + A_{21} \bfe_2 \otimes \bfe_1$ with $0 < \alpha < 1$ and $A_{21} \in \R$.
By QR decomposition,
 \begin{align*}
  \Wd \left( \begin{pmatrix} \alpha & 0 \\
 A_{21} & \alpha
 \end{pmatrix}  , \bfe_2 \right) & = \Wd \left( \begin{pmatrix} \sqrt{\alpha^2 + A_{21}^2} & \frac{\alpha A_{21}}{\sqrt{\alpha^2 + A_{21}^2}} \\
 0 & \frac{\alpha^2}{\sqrt{\alpha^2 + A_{21}^2}}
 \end{pmatrix}  , \bfe_2 \right) \\
 & =
  W \begin{pmatrix}
 \sqrt{\alpha^2 + A_{21}^2} & 0 \\
 0 & \frac{\alpha^2}{\sqrt{\alpha^2 + A_{21}^2}} 
  \end{pmatrix} .
 \end{align*}

\end{enumerate}

\subsection{Mooney--Rivlin material in 3D}\label{subse:Mooney}

We consider the Mooney--Rivlin energy
\[
W (\bfF) = \frac{\mu_1}{2} \left( \left| \bfF \right|^2 - 2 \log \det \bfF - 3 \right) + \frac{\mu_2}{2} \left( \left| \cof \bfF \right|^2 - 4 \log \det \bfF - 3 \right) + \frac{\bar{\lambda}}{2} \left( \det \bfF - 1 \right)^2 ,
\]
where $\mu_1,\mu_2, \bar{\lambda} > 0$.
The minimum of $W$ is $0$ and is attained at $SO(3)$.
The Lam\'e parameters of this material are $\lambda = \bar{\lambda} + 2 \mu_2$ and $\mu = \mu_1 + \mu_2$, since the elasticity tensor $\C$ at the origin is given by \eqref{eq:isolinear}.
They are related with Young's modulus $E$ and Poisson's ratio $\nu$ as
\begin{equation*}
\mu =\frac{E}{2(1+\nu)} , \qquad
\lambda =\frac{E\nu}{(1+\nu)(1-2\nu)} , \qquad \nu = \frac{\lambda}{2 (\lambda + \mu)} .
\end{equation*}
In particular, this material has a positive Poisson ratio.

We do the calculations of $\Wd$ corresponding to Definition \ref{de:Wd}.
As in Section \ref{subse:NeoH2D}, as a consequence of Proposition \ref{pr:invariance} and the isotropy of $W$, it is enough to calculate $\Wd (\bfA, \bfe_3)$ for triangular matrices $\bfA$ with positive diagonal elements.
We have
\begin{align*}
  W & \begin{pmatrix}
 A_{11} & A_{12} & A_{13} \\
 0 & A_{22} & A_{23} \\
 0 & 0 & A_{33}
 \end{pmatrix} = \frac{\mu_1}{2} \left( A_{11}^2 + A_{22}^2 + A_{33}^2 + A_{12}^2 + A_{13}^2 + A_{23}^2 - 2 \log (A_{11} A_{22} A_{33}) - 3 \right) \\
 & \qquad \qquad \qquad + \frac{\mu_2}{2} \Big( 
 A_{11}^2 A_{22}^2 + A_{11}^2 A_{23}^2 + ( -A_{22} A_{13} + A_{23} A_{12} )^2 + A_{11}^2 A_{33}^2 + A_{33}^2 A_{12}^2 + A_{22}^2 A_{33}^2 \\
 & \qquad \qquad \qquad \qquad \quad - 4 \log (A_{11} A_{22} A_{33}) - 3 \Big) + \frac{\bar{\lambda}}{2} \left( A_{11} A_{22} A_{33} - 1 \right)^2 .
\end{align*}
Given $A_{11}, A_{22} >0$ and $A_{12} \in \R$, the minimum in $A_{33} >0$ and $A_{13}, A_{23} \in \R$ of the expression above is easily seen to be attained at $A^*_{13} = A^*_{23} = 0$ and $A_{33}^* (A_{11}, A_{22}, A_{12})$ given by
\begin{equation}\label{eq:A33*}
 A_{33}^* = \frac{\bar{\lambda} A_{11} A_{22} + \sqrt{(\bar{\lambda} A_{11} A_{22})^2 + 4 \left( \mu_1 + \mu_2 A_{11}^2 + \mu_2 A_{12}^2 + \mu_2 A_{22}^2 + \bar{\lambda} A_{11}^2 A_{22}^2 \right) \left( \mu_1 + 2 \mu_2 \right)}}{2 \left( \mu_1 + \mu_2 A_{11}^2 + \mu_2 A_{12}^2 + \mu_2 A_{22}^2 + \bar{\lambda} A_{11}^2 A_{22}^2 \right)} ,
\end{equation}
while given $A_{11}, A_{22}, A_{33} >0$ and $A_{12} \in \R$, its minimum in $A_{13}, A_{23} \in \R$ is easily seen to be attained at $A^{**}_{13} = A^{**}_{23} = 0$.
For future reference, we note that
\[
 A_{33}^* > 1 \quad \text{if and only if} \quad \mu_2 A_{11}^2 + \mu_2 A_{12}^2 + \mu_2 A_{22}^2 + \bar{\lambda} A_{11}^2 A_{22}^2 < \bar{\lambda} A_{11} A_{22} + 2 \mu_2 .
\]
In fact, the following particular case will be useful in the analysis of some examples: given $\alpha>0$,
\begin{equation}\label{eq:A33*alpha}
 A_{33}^* (\alpha, \alpha, 0) > 1 \quad \text{if and only if} \quad \alpha < 1 .
\end{equation}

The expression for $\Wd$ is, therefore,
\begin{equation}\label{eq:W1example}
 \Wd \left( \begin{pmatrix}
 A_{11} & A_{12} & A_{13} \\
 0 & A_{22} & A_{23} \\
 0 & 0 & A_{33}
 \end{pmatrix}, \bfe_3 \right) = \begin{cases}
 W \begin{pmatrix}
 A_{11} & A_{12} & 0 \\
 0 & A_{22} & 0 \\
 0 & 0 & A_{33}^*
 \end{pmatrix} , & \text{if } A_{33} > A_{33}^* , \\
 W \begin{pmatrix}
 A_{11} & A_{12} & 0 \\
 0 & A_{22} & 0 \\
 0 & 0 & A_{33}
 \end{pmatrix} , & \text{if } A_{33} \leq A_{33}^* .
 \end{cases}
\end{equation}

In order to calculate the values of $\Wd (\bfF, \bfe_3)$ for any $\bfF \in \R^{3 \times 3}_+$, we apply QR decomposition to $\bfF$: elementary but long calculations show that $\bfF = \bfR \bfA$ for some $\bfR \in SO(3)$ and
\[
 \bfA = \begin{pmatrix}
 A_{11} & A_{12} & A_{13} \\
 0 & A_{22} & A_{23} \\
 0 & 0 & A_{33}
 \end{pmatrix} ,
\]
with
\begin{align*}
 & A_{11} = \sqrt{F_{11}^2 + F_{21}^2 + F_{31}^2} , \\
 & A_{22} = \frac{\sqrt{(F_{11} F_{22} - F_{12} F_{21})^2 + ( F_{11} F_{32} - F_{12} F_{31})^2 + (F_{21} F_{32} - F_{22} F_{31})^2}}{A_{11}} , \\
 & A_{33} = \frac{\det \bfF}{A_{11} A_{22}} , \qquad
 A_{12} = \frac{F_{11} F_{12} + F_{21} F_{22} + F_{31} F_{32}}{A_{11}} .
\end{align*}
The expressions for $A_{13}, A_{23}$ are not relevant in this calculation.
With this, we have by Proposition \ref{pr:invariance} that $\Wd (\bfF, \bfe_3) = \Wd (\bfA, \bfe_3)$ and can apply formula \eqref{eq:W1example}.

Finally, the values of $\Wd (\bfF, \bfn)$ for any $(\bfF, \bfn)$ can be calculated with the formula above and the relation
\begin{equation}\label{eq:Wdinvariance3D}
 \Wd (\bfQ \bfF, \bfn) = \Wd (\bfF, \bfn) = \Wd (\bfF \bfQ^T , \bfQ \bfn) , \qquad \bfQ \in SO(3)
\end{equation}
(see Proposition \ref{pr:invariance} and note that $W$ is isotropic).
More explicitly, we consider, for any unit vector $\bfn$, two vectors $\bft_1 , \bft_2$ such that $\{ \bft_1 , \bft_2 , \bfn \}$ is an orthonormal basis.
Then we consider the rotation $\bfQ = \bfe_1 \otimes \bft_1 + \bfe_2 \otimes \bft_2 + \bfe_3 \otimes \bfn$, which in coordinates takes the form
\begin{equation}\label{eq:Q}
 \bfQ = \begin{pmatrix}
 (\bft_1)_1 & (\bft_1)_2 & (\bft_1)_3 \\
 (\bft_2)_1 & (\bft_2)_2 & (\bft_2)_3 \\
 n_1 & n_2 & n_3
 \end{pmatrix} .
\end{equation}
Clearly, $\bfQ \bfn = \bfe_3$.
Then $\Wd (\bfF, \bfn) = \Wd (\bfF \bfQ^T, \bfe_3)$ and we can apply the formulas above.
Specific choices of $\bft_1, \bft_2$ can be
\[
 \bft_1 = \frac{(n_1 n_3, n_2 n_3, -n_1^2 - n_2^2)}{\sqrt{n_1^2 + n_2^2}} , \qquad \bft_2 = \frac{(n_2, - n_1, 0)}{\sqrt{n_1^2 + n_2^2}} .
\]
  
Now we compute the effective energy $\Wd$ of the basic modes described in \ref{item:eP}--\ref{item:cP} descibed in Section \ref{subse:QR} for the special case that $\bfn = \bfe_3$, $\bft_1 = \bfe_1$ and $\bft_2 = \bfe_2$:
\begin{enumerate}[label=(\alph*)]

\item
$\bfF = \bfe_1 \otimes \bfe_1 + \bfe_2 \otimes \bfe_2 + A_{33} \bfe_3 \otimes \bfe_3$ with $A_{33} \geq 1$.
We have $A_{33}^* = 1$ and, hence,
\[
 \Wd \left( \begin{pmatrix}
 1 & 0 & 0 \\
 0 & 1 & 0 \\
 0 & 0 & A_{33}
 \end{pmatrix}  , \bfe_3 \right) = W \begin{pmatrix}
 1 & 0 & 0 \\
 0 & 1 & 0 \\
 0 & 0 & 1
 \end{pmatrix} = 0 .
\]

\item
$\bfF = \bfI + A_{13} \bfe_1 \otimes \bfe_3 + A_{23} \bfe_2 \otimes \bfe_3$ with $A_{13}, A_{23} \in \R$.
We have $A_{33}^* = 1$ and, hence,
\[
 \Wd \left( \begin{pmatrix}
 1 & 0 & A_{13} \\
 0 & 1 & A_{23} \\
 0 & 0 & 1
 \end{pmatrix} , \bfe_3 \right) = W \begin{pmatrix}
 1 & 0 & 0 \\
 0 & 1 & 0 \\
 0 & 0 & 1
 \end{pmatrix} = 0 .
\]

\item
$\bfF = \alpha \bfe_1 \otimes \bfe_1 + \alpha \bfe_2 \otimes \bfe_2 + \bfe_3 \otimes \bfe_3$ with $0 < \alpha < 1$.
Thanks to \eqref{eq:A33*alpha}, we have $A_{33}^* > 1$ and, hence,
\[
 \Wd \left( \begin{pmatrix}
 \alpha & 0 & 0 \\
 0 & \alpha & 0 \\
 0 & 0 & 1
 \end{pmatrix}  , \bfe_3 \right) = 
 W \begin{pmatrix}
 \alpha & 0 & 0 \\
 0 & \alpha & 0 \\
 0 & 0 & 1
 \end{pmatrix} .
\]

\item
$\bfF = \alpha \bfe_1 \otimes \bfe_1 + \alpha \bfe_2 \otimes \bfe_2 + \bfe_3 \otimes \bfe_3$ with $\alpha > 1$.
Thanks to \eqref{eq:A33*alpha}, we have $A_{33}^* < 1$ and, hence,
\[
 \Wd \left( \begin{pmatrix}
 \alpha & 0 & 0 \\
 0 & \alpha & 0 \\
 0 & 0 & 1
 \end{pmatrix}  , \bfe_3 \right) =
 W \begin{pmatrix}
 \alpha & 0 & 0 \\
 0 & \alpha & 0 \\
 0 & 0 & A_{33}^*
 \end{pmatrix} .
\]

The same comments of modes \ref{item:ccT}-\ref{item:ecT} in the neo-Hookean material of Subsection \ref{subse:NeoH2D}
apply here: as the material has a positive Poisson ratio, the extension of magnitude $\alpha>1$ parallel to the crack induces a compression of magnitude $A_{33}^* <1$ perpendicular to the crack.

\item
$\bfF = A_{33} \bfe_3 \otimes \bfe_3 + \bfe_1 \otimes \bfe_1 + \bfe_2 \otimes \bfe_2$ with $0 < A_{33} < 1$.
We have $A_{33}^*=1$ and, hence,
\[
 \Wd \left( \begin{pmatrix}
 1 & 0 & 0 \\
 0 & 1 & 0 \\
 0 & 0 & A_{33}
 \end{pmatrix}  , \bfe_3 \right) = W \begin{pmatrix}
 1 & 0 & 0 \\
 0 & 1 & 0 \\
 0 & 0 & A_{33}
 \end{pmatrix} .
\]
\end{enumerate}
As in Section \ref{subse:NeoH2D}, we compute the energy of the following two  modes for which $0 < \Wd < W$.

\begin{enumerate}[label=(\alph*),resume]
\item Isotropic compression with shear parallel to the crack:
$\bfF = \alpha \bfI + A_{13} \bfe_1 \otimes \bfe_3 + A_{23} \bfe_2 \otimes \bfe_3$ with $0 < \alpha < 1$ and $A_{13}, A_{23} \in \R$.
We have
\[
 A_{33}^* = \frac{\bar{\lambda} \alpha^2 + \sqrt{(\bar{\lambda} \alpha^2)^2 + 4 \left( \mu_1 + 2 \mu_2 \alpha^2 + \bar{\lambda} \alpha^4 \right) \left( \mu_1 + 2 \mu_2 \right)}}{2 \left( \mu_1 + 2 \mu_2 \alpha^2 + \bar{\lambda} \alpha^4 \right)} \geq \alpha ,
\]
so
\[
 \Wd \left( \begin{pmatrix}
 \alpha & 0 & A_{13} \\
 0 & \alpha & A_{23} \\
 0 & 0 & \alpha
 \end{pmatrix} , \bfe_3 \right) = W \begin{pmatrix}
 \alpha & 0 & 0 \\
 0 & \alpha & 0 \\
 0 & 0 & \alpha
 \end{pmatrix} .
\]

\item Isotropic compression with shear perpendicular to the crack:
$\bfF = \alpha \bfI + A_{31} \bfe_3 \otimes \bfe_1 + A_{32} \bfe_3 \otimes \bfe_2$ with $0 < \alpha \leq 1$ and $A_{31}, A_{32} \in \R$.
Some long but elementary calculations show that, when we define
\begin{equation}\label{eq:barA}
\begin{split}
 & \bar{A}_{11} =\sqrt{\alpha^2+ A_{31} ^2} , \qquad \bar{A}_{12} = \frac{A_{31}  A_{32}}{\bar{A}_{11}} , \\
 & \bar{A}_{22} = \frac{\sqrt{\alpha^4+\left( A_{31} ^2+A_{32}^2\right) \alpha^2}}{\bar{A}_{11}} , \qquad \bar{A}_{33} = \frac{\alpha^3}{\bar{A}_{11} \bar{A}_{22}},
\end{split}
\end{equation}
the expression of  $A^*_{33}$ is obtained via \eqref{eq:A33*} by substituting $A_{ij}$ with the $\bar{A}_{ij}$ in \eqref{eq:barA}, and the effective energy is
\[
 \Wd \left( \begin{pmatrix}
 \alpha & 0 & 0 \\
 0 & \alpha & 0 \\
 A_{31} & A_{32} & \alpha
 \end{pmatrix} , \bfe_3 \right) = \begin{cases}
 W \begin{pmatrix}
 \bar{A}_{11} & \bar{A}_{12} & 0 \\
 0 & \bar{A}_{22} & 0 \\
 0 & 0 & A_{33}^* 
 \end{pmatrix} & \text{if } \bar{A}_{33} \geq A_{33}^* , \\
  W \begin{pmatrix}
 \bar{A}_{11} & \bar{A}_{12} & 0 \\
 0 & \bar{A}_{22} & 0 \\
 0 & 0 & \bar{A}_{33} 
 \end{pmatrix} & \text{if } \bar{A}_{33} < A_{33}^* .
 \end{cases} 
\]
Unlike the analog of this mode in 2D (see Subsection \ref{subse:NeoH2D}), one can have both cases $\bar{A}_{33} \geq A_{33}^*$ and $\bar{A}_{33} < A_{33}^*$, depending on the parameters and the coefficients.
\end{enumerate}

\section{Example material: a $(p,q)$ energy}\label{se:pq}

In this section we consider a generalization of Mooney--Rivlin energies by changing the terms $\left| \bfF \right|^2$ and $\left| \cof \bfF \right|^2$ to  $\left| \bfF \right|^p$ and $\left| \cof \bfF \right|^q$, respectively, for arbitrary exponents $p, q \geq 1$.
The drawback is that explicit formulas are not available.

This generalization of the Mooney--Rivlin energy for exponents $(p,q)$ is useful in the modeling of soft materials \cite{HeMoXu15,HeMoXu16}.
For example, a neo-Hookean energy in 2D with a term $|\bfF|^p$ with $1<p<2$ allows for cavitation, while for $p \geq 2$, it does not.
Roughly speaking, the lower the exponent $p$, the softer the material.
In 3D, the exponent $q$ in $|\cof \bfF|^q$ also models the strength of the material.
In fact, $1 < p < 3$ and $1 < q < \frac{3}{2}$, the material can exhibit cavitation, while for $p \geq 3$ or $q \geq \frac{3}{2}$, it cannot \cite{MuQiYa94}.

\subsection{A $p$-energy in 2D}

For $\bar{\mu}, \bar{\lambda} >0$ and $p \geq 1$, we consider the following generalization of neo-Hookean energy:
\[
 W (\bfF) = \frac{\bar{\mu}}{p} \left( |\bfF|^p -2^{\frac{p}{2}} - 2^{\frac{p}{2}-1} p \log \det \bfF \right) + \frac{\bar{\lambda}}{2} \left( \det \bfF -1 \right)^2 .
\]
The minimum of $W$ is $0$ and is attained at $SO(2)$.
In order to compare the parameters $\bar{\mu}, \bar{\lambda}, p$ with those of linear elasticity, we define 
\[
 \lambda = \bar{\lambda} - 2^{\frac{p}{2}-2} \bar{\mu}, \qquad \mu = 2^{\frac{p}{2}-1} \bar{\mu} .
\]
and note that the elasticity tensor $\C$ at the origin is given by \eqref{eq:isolinear}.

As in Section \ref{subse:NeoH2D}, we do the calculations corresponding to Definition \ref{de:Wd}, with the obvious modifications in 2D.
Again, it is enough to calculate $\Wd (\bfA, \bfe_2)$ for upper-triangular $\bfA$.

For $A_{11}, A_{22} >0$ and $A_{12} \in \R$, the function $W$ satisfies
\[
 W \begin{pmatrix}
 A_{11} & A_{12} \\
 0 & A_{22}
 \end{pmatrix} = \frac{\bar{\mu}}{p} \left( \left( A_{11}^2 + A_{22}^2 + A_{12}^2 \right)^{\frac{p}{2}} -2^{\frac{p}{2}} - 2^{\frac{p}{2}-1} p \log (A_{11} A_{22}) \right) + \frac{\bar{\lambda}}{2} \left( A_{11} A_{22} -1 \right)^2 .
\]
It is easy to see that the infimum in $A_{12}$ is attained at $A_{12}^*=0$.
Moreover, as $p \geq 1$ the function $W \begin{pmatrix}
 A_{11} & A_{12} \\
 0 & A_{22}
 \end{pmatrix}$ is strictly convex in $A_{22}$, and tends to infinity when $A_{22} \to 0$ or $A_{22} \to \infty$.
Hence, there exists a unique minimizer $A_{22}^* = A_{22}^* (A_{11}) >0$.
The expression of $A_{22}^*$ cannot be given in closed form except for specific choices of $p$. 
Thus, the expression of the energy is
 \[
 \Wd \left( \begin{pmatrix}
 A_{11} & A_{12} \\
 0 & A_{22}
 \end{pmatrix} , \bfe_2 \right) = \begin{cases}
 W \begin{pmatrix}
 A_{11} & 0 \\
 0 & A_{22}^* 
  \end{pmatrix} , & \text{if } A_{22} > A_{22}^* , \\
  W \begin{pmatrix}
 A_{11} & 0 \\
 0 & A_{22} 
  \end{pmatrix} , & \text{if } A_{22} \leq A_{22}^* 
 \end{cases}
\]
and the value of $\Wd$ for any $(\bfF, \bfn)$ is reduced to this via \eqref{eq:Wdinvariance2D} (see Proposition \ref{pr:invariance} and note that $W$ is isotropic).
Alternatively, one can use Proposition \ref{pr:Wdalt}.

\subsection{A $(p,q)$-energy in 3D}

For $\mu_1, \mu_2, \bar{\lambda} >0$ and $p,q \geq 1$ we consider the following generalization of Mooney--Rivlin energy:
\begin{align*}
W (\bfF) = & \frac{\mu_1}{p} \left( \left| \bfF \right|^p - 3^{\frac{p}{2}} - 3^{\frac{p}{2} - 1} p \log \det \bfF \right) + \frac{\mu_2}{q} \left( \left| \cof \bfF \right|^q - 3^{\frac{q}{2}} - 2 \cdot 3^{\frac{q}{2} -1} q \log \det \bfF \right) \\
 & + \frac{\bar{\lambda}}{2} \left( \det \bfF - 1 \right)^2 .
\end{align*}
The minimum of $W$ is $0$ and is attained at $SO(3)$.
In order to compare the parameters $\mu_1, \mu_2, \bar{\lambda} ,p,q$ with those of linear elasticity, we define
\[
 \lambda = \bar{\lambda} - 3^{\frac{p}{2}-2} \mu_1 (p-2) + 2 \cdot 3^{\frac{q}{2}-2} \mu_2 (2 q - 1) , \qquad \mu = 3^{\frac{p}{2}-1} \mu_1 + 3^{\frac{q}{2}-1} \mu_2
\]
and note that the elasticity tensor $\C$ at the origin is given by \eqref{eq:isolinear}.

For $A_{11}, A_{22}, A_{33} >0$ and $A_{12}, A_{13}, A_{23} \in \R$, the function $W$ satisfies
\begin{align*}
 & W \begin{pmatrix}
 A_{11} & A_{12} & A_{13} \\
 0 & A_{22} & A_{23} \\
 0 & 0 & A_{33}
 \end{pmatrix} = \frac{\mu_1}{p} \left( \left( A_{11}^2 + A_{22}^2 + A_{33}^2 + A_{12}^2 + A_{13}^2 + A_{23}^2 \right)^{\frac{p}{2}} - 3^{\frac{p}{2}} - 3^{\frac{p}{2} - 1} p \log (A_{11} A_{22} A_{33}) \right) \\
 & + \frac{\mu_2}{q} \Big( \left( A_{11}^2 A_{22}^2 + A_{11}^2 A_{23}^2 + ( -A_{22} A_{13} + A_{23} A_{12} )^2 + A_{11}^2 A_{33}^2 + A_{33}^2 A_{12}^2 + A_{22}^2 A_{33}^2 \right)^{\frac{q}{2}} \\
 & \qquad \quad - 3^{\frac{q}{2}} - 2 \cdot 3^{\frac{q}{2} -1} q \log (A_{11} A_{22} A_{33}) \Big) \\
 & + \frac{\bar{\lambda}}{2} \left( A_{11} A_{22} A_{33} - 1 \right)^2 .
\end{align*}
It is easy to see that the infimum of this expression in $A_{13}, A_{23} \in \R$ is attained when $A_{13}^* = A_{23}^* = 0$.
Moreover, as $p , q \geq 1$ the function
\[
 W \begin{pmatrix}
 A_{11} & A_{12} & A_{13} \\
 0 & A_{22} & A_{23} \\
 0 & 0 & A_{33}
 \end{pmatrix}
\]
is strictly convex in $A_{33}$, and tends to infinity when $A_{33} \to 0$ or $A_{33} \to \infty$.
Hence, there exists a unique minimizer $A_{33}^* = A_{33}^* (A_{11}, A_{22}, A_{12}) >0$.
The expression of $A_{33}^*$ cannot be given in closed form except for specific choices of $p,q$. 
Thus, the expression of the energy is
 \[
 \Wd \left( \begin{pmatrix}
 A_{11} & A_{12} & A_{13} \\
 0 & A_{22} & A_{23} \\
 0 & 0 & A_{33}
 \end{pmatrix} , \bfe_3 \right) = \begin{cases}
 W \begin{pmatrix}
 A_{11} & A_{12} & 0 \\
 0 & A_{22} & 0 \\
 0 & 0 & A_{33}^*
 \end{pmatrix} , & \text{if } A_{33} > A_{33}^* , \\
  W \begin{pmatrix}
 A_{11} & A_{12} & 0 \\
 0 & A_{22} & 0 \\
 0 & 0 & A_{33}
 \end{pmatrix} , & \text{if } A_{33} \leq A_{33}^* 
 \end{cases}
\]
and the value of $\Wd$ at any $(\bfF, \bfn)$ is reduced to this via \eqref{eq:Wdinvariance3D}.
Alternatively, one can use Proposition \ref{pr:Wdalt}.

\section{Example material: General energy near the identity}\label{se:ExampleGeneral}

In this section we show how far we can go with the expression of $\Wd$ near the identity without an explicit formula for $W$.
The calculations here will be useful in Section \ref{se:linear} when we develop the linear theory.
We perform the calculations in detail for the 2D case and just write the final formulas for the 3D case.

The assumptions on $W$ are as follows: $W$ is a $C^2$ function, its set of global minimizers is $SO(3)$ (in the 2D case, $SO(2)$), it satisfies the coercivity assumption \eqref{eq:Winfty}, and the restriction of the elasticity tensor $\C = D^2 W (\bfI)$ to the set of symmetric matrices is positive definite.
Note that, necessarily, $D W (\bfI) = \bf0$.

\subsection{General 2D energy}\label{subse:General2D}

We calculate $\Wd$ according to Definition \ref{de:Wd}, with the obvious modification for 2D.
The basis chosen is the canonical one: $\{ \bfe_1, \bfe_2 \}$.

We use the usual notation
\[
 c_{ijkl} = \parderiv{^2 W}{F_{ij} \partial F_{kl}} (\bfI)
\]
for the components of the elasticity tensor $\C$.
The following inequalities are possibly well known to experts, but we have not found a proper reference.

\begin{lemma}\label{le:posdef}
Assume that the $\C$ is positive definite in the set of symmetric matrices.
Then
\[
 c_{1212} > 0 \quad \text{and} \quad \text c_{1212} c_{2222} - c_{1222}^2 > 0 .
\]
\end{lemma}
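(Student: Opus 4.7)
The plan is to exploit frame-indifference (imposed throughout the paper) to reduce the quadratic form $\bfH \mapsto \C\bfH:\bfH$ on $\R^{2\times 2}$ to its restriction to symmetric matrices, and then to test positive definiteness on two carefully chosen non-symmetric tensors. The key auxiliary identity I would establish is
\begin{equation*}
\C\bfH:\bfH = \C\bfvareps:\bfvareps, \qquad \bfvareps := \tfrac{1}{2}(\bfH + \bfH^T).
\end{equation*}
To prove it, decompose $\bfH = \bfvareps + \bfomega$ with $\bfomega$ skew, set $\bfR(t) := \exp(-t\bfomega) \in SO(2)$, and verify by direct expansion that $\bfR(t)(\bfI + t\bfH) = \bfI + t\bfvareps + O(t^2)$. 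Frame-indifference gives $W(\bfI + t\bfH) = W(\bfR(t)(\bfI + t\bfH))$; Taylor-expanding both sides to order $t^2$ and using $DW(\bfI) = \mathbf{0}$ (so that the $O(t^2)$ perturbation of the argument contributes only at order $t^3$ to $W$), matching the coefficients of $t^2$ forces the claimed identity.

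With this reduction in hand, the first inequality is immediate: take $\bfH = \bfe_1 \otimes \bfe_2$, whose symmetric part $\bfvareps = \tfrac{1}{2}(\bfe_1 \otimes \bfe_2 + \bfe_2 \otimes \bfe_1)$ is nonzero. Positive definiteness of $\C$ on symmetric matrices then gives $c_{1212} = \C\bfH:\bfH = \C\bfvareps:\bfvareps > 0$. For the second inequality, take $\bfH = \alpha\,\bfe_1 \otimes \bfe_2 + \beta\,\bfe_2 \otimes \bfe_2$ for arbitrary $(\alpha,\beta) \neq (0,0)$. Its symmetric part is again nonzero, so the reduction together with positive definiteness yields
\begin{equation*}
0 < \C\bfH:\bfH = c_{1212}\alpha^2 + 2\, c_{1222}\alpha\beta + c_{2222}\beta^2 .
\end{equation*}
Since this binary quadratic form is positive definite on $\R^2\setminus\{(0,0)\}$, Sylvester's criterion yields both $c_{1212} > 0$ (already known) and $c_{1212}\, c_{2222} - c_{1222}^2 > 0$.

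The main obstacle is the reduction in the first step: one must carefully track how the $O(t^2)$ perturbation of $\bfI + t\bfvareps$ inside the argument of $W$ interacts with the second-order Taylor expansion of $W$ about $\bfI$. The argument crucially uses that $\bfI$ is a critical point of $W$, so that this perturbation is pushed to order $t^3$ and does not contaminate the coefficient of $t^2$. Once this is in hand, both inequalities reduce to elementary linear algebra.
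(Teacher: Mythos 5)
Your proof is correct, but it takes a route genuinely different from the paper's. The paper simply tests positive definiteness of $\C$ on two hand-picked \emph{symmetric} strains and reads off the inequalities: $\vareps_{11}=\vareps_{22}=0$, $\vareps_{12}=\tfrac12$ gives $\C\bfvareps:\bfvareps = c_{1212}>0$; and $\vareps_{11}=0$, $\vareps_{12}=-c_{1222}/(2\sqrt{c_{1212}})$, $\vareps_{22}=\sqrt{c_{1212}}$ gives $\C\bfvareps:\bfvareps = c_{1212}c_{2222}-c_{1222}^2>0$ after a one-line expansion. Both calculations use the expansion of $\C\bfvareps:\bfvareps$ on symmetric strains, written out later in the paper, which implicitly encodes the minor symmetries of $\C$. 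You instead make that reliance explicit: you derive the identity $\C\bfH:\bfH = \C\bfvareps:\bfvareps$ from frame-indifference and $DW(\bfI)=\mathbf{0}$ (both available in the context of Section 6 even though not restated in the lemma), and then test on non-symmetric tensors. The identity is correct, and the Taylor argument --- pushing the $O(t^2)$ perturbation of the argument of $W$ to order $t^3$ via criticality of $\bfI$ --- is the standard way to prove it. But it is overkill here: the quadratic form $c_{1212}\alpha^2+2c_{1222}\alpha\beta+c_{2222}\beta^2$ you extract from $\bfH=\alpha\,\bfe_1\otimes\bfe_2+\beta\,\bfe_2\otimes\bfe_2$ is, once the minor symmetries are granted, exactly $\C\bfvareps:\bfvareps$ for the symmetric strain $\vareps_{12}=\vareps_{21}=\alpha/2$, $\vareps_{22}=\beta$, $\vareps_{11}=0$, so you could have tested on symmetric strains directly and skipped the reduction lemma entirely --- which is what the paper does. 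What your route does buy is a systematic deduction of both inequalities at once via Sylvester's criterion, rather than the paper's second, somewhat opaque hand-crafted choice of $\vareps$, which is really a completion of the square in disguise.
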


Define $\bff : \R^3 \to \R^2$ as
\[
 \bff (A_{22}, A_{11}, A_{12}) = \left( \parderiv{W}{F_{12}} \begin{pmatrix} A_{11} & A_{12} \\ 0 & A_{22} \end{pmatrix} , \parderiv{W}{F_{22}} \begin{pmatrix} A_{11} & A_{12} \\ 0 & A_{22} \end{pmatrix} \right) .
\]
Then
\[
 \bff (1, 1, 0) = (0, 0) , \qquad \parderiv{\bff}{A_{22}} (1, 1, 0) = (c_{1222}, c_{2222}) , \qquad \parderiv{\bff}{A_{12}} (1, 1, 0) = (c_{1212}, c_{1222}) .
\]
Thanks to Lemma \ref{le:posdef}, we can apply the implicit function theorem, and find that there are unique $C^1$ functions $A_{22}^* = A_{22}^* (A_{11})$ and $A_{12}^* = A_{12}^* (A_{11})$ defined for $A_{11} \simeq 1$ such that
\begin{equation}\label{eq:critical}
 \parderiv{W}{F_{12}} \begin{pmatrix} A_{11} & A_{12}^* (A_{11}) \\ 0 & A_{22}^* (A_{11}) \end{pmatrix} = \parderiv{W}{F_{22}} \begin{pmatrix} A_{11} & A_{12}^* (A_{11}) \\ 0 & A_{22}^* (A_{11}) \end{pmatrix} = 0 .
\end{equation}
Moreover,
\begin{equation}\label{eq:A12*}
\begin{aligned}
 & A_{12}^* (1) = 0, &&  A_{22}^* (1) = 1 , \\
 & (A_{12}^*)' (1) = \frac{c_{1122} c_{1222} - c_{1112} c_{2222}}{c_{1212} c_{2222} - c_{1222}^2} , &&
 (A_{22}^*)' (1) = \frac{c_{1112} c_{1222} - c_{1122} c_{1212}}{c_{1212} c_{2222} - c_{1222}^2} .
\end{aligned}
\end{equation}

In an analogous way, there is a unique $C^1$ function $A_{12}^{**} = A_{12}^{**} (A_{22}, A_{11})$ defined for $(A_{22}, A_{11}) \simeq (1, 1)$ such that
\[
 \parderiv{W}{F_{12}} \begin{pmatrix}
 A_{11} & A_{12}^{**} (A_{22}, A_{11}) \\
 0 & A_{22}
 \end{pmatrix} = 0 .
\]
Moreover,
\begin{equation}\label{eq:A12**}
 A_{12}^{**} (1,1) = 0 , \qquad \parderiv{A_{12}^{**}}{A_{11}} (1,1) = - \frac{c_{1112}}{c_{1212}} , \qquad
 \parderiv{A_{12}^{**}}{A_{22}} (1,1) = - \frac{c_{1222}}{c_{1212}}.
\end{equation}

A standard argument based on the coercivity \eqref{eq:Winfty}, the uniqueness of the functions $A_{22}^*, A_{12}^*, A_{12}^{**}$ and the fact that the minimum of $W$ is attained at $SO(2)$ shows that
\[
 \inf_{\substack{A_{22}' > 0 \\ A_{12}' \in \R}} W \begin{pmatrix}
 A_{11} & A_{12}' \\
 0 & A_{22}'
 \end{pmatrix} = W \begin{pmatrix}
 A_{11} & A_{12}^* (A_{11}) \\
 0 & A_{22}^* (A_{11})
 \end{pmatrix} ,
 \quad 
 \inf_{A_{12}' \in \R} W \begin{pmatrix}
 A_{11} & A_{12}' \\
 0 & A_{22}
 \end{pmatrix} =   W \begin{pmatrix}
 A_{11} & A_{12}^{**} (A_{22}, A_{11}) \\
 0 & A_{22}
 \end{pmatrix} .
\]
Indeed, the sketch of this argument is as follows.
We only do it for the first infimum, since the second is analogous.
The idea is to divide the range of the variables $A_{12}'$ and $A_{22}'$ into three regions, and ascertain whether the infimum is attained in those regions.
Region $R_1$ consists of values for which $|A_{12}'|$ is very big or $A_{22}' \simeq 0$ or $A_{22}'$ is very big.
Region $R_2$ consists of those values of $A_{12}'$ and $A_{22}'$ not in $R_1$ for which $A_{12}'$ is far from zero or $A_{22}'$ is far from $1$.
Finally, region $R_3$ consists of those values for which $A_{12}' \simeq 0$ and $A_{22}' \simeq 1$.
Thus, regions $R_1$, $R_2$ and $R_3$ cover the range of the variables  $A_{12}'$ and $A_{22}'$.
The continuity of $W$ and the coercivity condition \eqref{eq:Winfty} ensure that the infimum is actually a minimum and that it is not attained when $A_{22}' \to 0$ or $A_{22}' \to \infty$ or $|A_{12}'| \to \infty$.
Thus, the minimum is not attained in region $R_1$.
Since $\argmin W = SO(2)$ and $A_{11} \simeq 1$ the infimum has to be attained when
\[
\begin{pmatrix}
 A_{11} & A_{12}' \\
 0 & A_{22}'
 \end{pmatrix}
\]
is close to $SO(2)$.
This rules out the possibility that $A_{12}'$ and $A_{22}'$ are in region $R_2$.
Therefore, the infimum is attained in region $R_1$.
Now, the partial derivatives of $W$ with respect to $A_{12}$ and $A_{22}$ have to be zero at the minimum.
Finally, the argument \eqref{eq:critical} show that $A_{12}^*$ and $A_{22}^*$ are the only functions in region $R_1$ that make those partial derivatives be zero.

Thus, the expression of $\Wd$ becomes
\[
 \Wd \left( \begin{pmatrix}
 A_{11} & A_{12} \\
 0 & A_{22}
 \end{pmatrix} , \bfe_2 \right) = \begin{cases}
 W \begin{pmatrix}
 A_{11} & A_{12}^* (A_{11}) \\
 0 & A_{22}^* (A_{11})
 \end{pmatrix} , & \text{if } A_{22} \geq A_{22}^* (A_{11}) , \\
 W \begin{pmatrix}
 A_{11} & A_{12}^{**} (A_{22}, A_{11}) \\
 0 & A_{22}
 \end{pmatrix} , & \text{if } A_{22} < A_{22}^* (A_{11}) . 
 \end{cases}
\]
If $W$ is isotropic, this is enough to determine $\Wd$.
If not, one has to adapt the computation of this section to any orthonormal basis.
We omit the calculations, since they are totally analogous, and just write the final formulas.
The expression for $\Wd$ is
\[
 \Wd \left( \bfA (A_{\bfn \bfn} , A_{\bft \bft}, A_{\bft \bfn} ) , \bfn \right) = \begin{cases}
 W \left( \bfA ( A_{\bfn \bfn}^* (A_{\bft \bft}) , A_{\bft \bft}, A_{\bft \bfn}^* (A_{\bft \bft} ) \right) , & \text{if } A_{\bfn \bfn} \geq A_{\bfn \bfn}^* (A_{\bft \bft}) , \\
 W \left( \bfA ( A_{\bfn \bfn} , A_{\bft \bft}, A_{\bft \bfn}^{**} (A_{\bfn \bfn}, A_{\bft \bft}) \right) , & \text{if } A_{\bfn \bfn} < A_{\bfn \bfn}^* (A_{\bft \bft}) ,
 \end{cases}
\]
with the following definitions and properties:
\begin{itemize}
\item $\{ \bft, \bfn \}$ is an orthonormal basis.

\item $\bfA (A_{\bfn \bfn} , A_{\bft \bft}, A_{\bft \bfn} ) = A_{\bfn \bfn} \bfn \otimes \bfn + A_{\bft \bft} \bft \otimes \bft + A_{\bft \bfn} \bft \otimes \bfn$, which is the 2D analog of \eqref{eq:At1t2n}.

\item $\tilde{c}_{ijkl}$ are the coefficients of the elasticity tensor with respect to the basis $\{ \bft, \bfn\}$.
We will use a dual notation with the same meaning: the indices $i, j, k, l$ run in the set $\{ 1, 2 \}$ and also in the set $\{ \bft, \bfn \}$.

\item $\tilde{c}_{\bft \bfn \bft \bfn} > 0$ and $\tilde{c}_{\bft \bfn \bft \bfn} \tilde{c}_{\bfn \bfn \bfn \bfn} - \tilde{c}_{\bft \bfn \bfn \bfn}^2 > 0$.

\item There are unique $C^1$ functions $A_{\bfn \bfn}^* = A_{\bfn \bfn}^* (A_{\bft \bft})$ and $A_{\bft \bfn}^* = A_{\bft \bfn}^* (A_{\bft \bft})$ defined for $A_{\bft \bft} \simeq 1$ such that
\[
 \inf_{\substack{A_{\bfn \bfn}' > 0 \\ A_{\bft \bfn}' \in \R}} W \left( \bfA (A_{\bfn \bfn}' , A_{\bft \bft}, A_{\bft \bfn}' ) \right) = W \left( \bfA (A_{\bfn \bfn}^* (A_{\bft \bft})  , A_{\bft \bft}, A_{\bft \bfn}^* (A_{\bft \bft}) ) \right) .
\]
Moreover,
\begin{equation}\label{eq:Atn*}
\begin{aligned}
 & A_{\bft \bfn}^* (1) = 0, &&  A_{\bfn \bfn}^* (1) = 1 , \\
 & (A_{\bft \bfn}^*)' (1) = \frac{\tilde{c}_{\bft \bft \bfn \bfn} \tilde{c}_{\bft \bfn \bfn \bfn} - \tilde{c}_{\bft \bft \bft \bfn} \tilde{c}_{\bfn \bfn \bfn \bfn}}{\tilde{c}_{\bft \bfn \bft \bfn} \tilde{c}_{\bfn \bfn \bfn \bfn} - \tilde{c}_{\bft \bfn \bfn \bfn}^2} , &&
 (A_{\bfn \bfn}^*)' (1) = \frac{\tilde{c}_{\bft \bft \bft \bfn} \tilde{c}_{\bft \bfn \bfn \bfn} - \tilde{c}_{\bft \bft \bfn \bfn} \tilde{c}_{\bft \bfn \bft \bfn}}{\tilde{c}_{\bft \bfn \bft \bfn} \tilde{c}_{\bfn \bfn \bfn \bfn} - \tilde{c}_{\bft \bfn \bfn \bfn}^2} .
\end{aligned}
\end{equation}

\item There is a unique $C^1$ function $A_{\bft \bfn}^{**} = A_{\bft \bfn}^{**} (A_{\bfn \bfn}, A_{\bft \bft})$ defined for $(A_{\bfn \bfn}, A_{\bft \bft}) \simeq (1, 1)$ such that
\[
 \inf_{A_{\bft \bfn}' \in \R} W \left( \bfA (A_{\bfn \bfn} , A_{\bft \bft}, A_{\bft \bfn}' ) \right) = W \left( \bfA (A_{\bfn \bfn} , A_{\bft \bft}, A_{\bft \bfn}^{**} (A_{\bfn \bfn}, A_{\bft \bft}) ) \right) .
\]
Moreover,
\begin{equation}\label{eq:Atn**}
 A_{\bft \bfn}^{**} (1,1) = 0 , \qquad \parderiv{A_{\bft \bfn}^{**}}{A_{\bft \bft}} (1,1) = - \frac{\tilde{c}_{\bft \bft \bft \bfn}}{\tilde{c}_{\bft \bfn \bft \bfn}} , \qquad
 \parderiv{A_{\bft \bfn}^{**}}{A_{\bfn \bfn}} (1,1) = - \frac{\tilde{c}_{\bft \bfn \bfn \bfn}}{\tilde{c}_{\bft \bfn \bft \bfn}}.
\end{equation}

\end{itemize}

\subsection{General 3D energy}

In this section we just write the final formula for the effective energy, without a detailed description of the calculations, which, of course, follow the lines of the previous subsection.
There exists $C^1$ functions
\[
A_{13}^* = A_{13}^* (A_{11}, A_{22}, A_{12}) , \qquad A_{23}^* = A_{23}^* (A_{11}, A_{22}, A_{12}) , \qquad A_{33}^* = A_{33}^* (A_{11}, A_{22}, A_{12})
\]
defined for $(A_{11}, A_{22}, A_{12}) \simeq (1,1,0)$, and
\[
 A_{13}^{**} = A_{13}^{**} (A_{11}, A_{22}, A_{33}, A_{12}) , \qquad A_{23}^{**} = A_{23}^{**} (A_{11}, A_{22}, A_{33}, A_{12})
\]
defined for $(A_{11}, A_{22}, A_{33}, A_{12}) \simeq (1,1,1,0)$
such that
\[
 \inf_{\substack{A_{33}' > 0 \\ A_{13}', A_{23}' \in \R}} W \begin{pmatrix}
 A_{11} & A_{12} & A_{13}' \\
 0 & A_{22} & A_{23}'  \\
 0 & 0 & A_{33}' 
 \end{pmatrix} = W \begin{pmatrix}
 A_{11} & A_{12} & A_{13}^* \\
 0 & A_{22} & A_{23}^*  \\
 0 & 0 & A_{33}^* 
 \end{pmatrix} ,
\]
\[
 \inf_{A_{13}', A_{23}' \in \R} W \begin{pmatrix}
 A_{11} & A_{12} & A_{13}' \\
 0 & A_{22} & A_{23}'  \\
 0 & 0 & A_{33} 
 \end{pmatrix} =   W \begin{pmatrix}
 A_{11} & A_{12} & A_{13}^{**} \\
 0 & A_{22} & A_{23}^{**}  \\
 0 & 0 & A_{33} 
 \end{pmatrix}
\]
and
\[
 \Wd \left( \begin{pmatrix}
 A_{11} & A_{12} & A_{13} \\
 0 & A_{22} & A_{23} \\
 0 & 0 & A_{33}
 \end{pmatrix} , \bfe_3 \right) = \begin{cases}
 W \begin{pmatrix}
 A_{11} & A_{12} & A_{13}^* \\
 0 & A_{22} & A_{23}^*  \\
 0 & 0 & A_{33}^* 
 \end{pmatrix} , & \text{if } A_{33} \geq A_{33}^* , \\
 W \begin{pmatrix}
 A_{11} & A_{12} & A_{13}^{**} \\
 0 & A_{22} & A_{23}^{**}  \\
 0 & 0 & A_{33}
 \end{pmatrix} , & \text{if } A_{33} < A_{33}^* . 
 \end{cases}
\]
If $W$ is isotropic, this is enough to determine $\Wd$.
If not, one has to adapt the above calculations to any orthonormal basis $\{ \bft_1, \bft_2, \bfn\}$, as explained at the end of the previous subsection.

%%%%%%%%%%%%%%%%%%%%%%%%%%%%%%%%%%%%%%%
%%%%%%%%%%%%%%%%%%%%%%%%%%%%%%%%%%%%%%%
%%%%%%%%%%%%%%%%%%%%%%%%%%%%%%%%%%%%%%%
%%%%%%%%%%%%%%%%%%%%%%%%%%%%%%%%%%%%%%%
\section{Small Deformation Model}\label{se:linear}

The theory developed throughout this work is essentially nonlinear, since, for example, frame-indifference for $W$ is reflected in the equality $W (\bfR \bfF) = W (\bfF)$ for all rotations $\bfR$, while for $\Wd$ it takes the form \eqref{eq:Wdindifference}.
Consequently, the procedure to derive a linear theory is to start from a nonlinear energy $W$, compute its relaxation $\Wd$ and then linearize $\Wd$.
Changing the order of these operations (i.e., first linearize and then relax) would end up with a nonlinearly elastic energy, which eventually would need a further linearization.

The function obtained from $W$ by this process of relaxation and linearization will be denoted by $\Wdlin$, so that $\Wdlin (\bfvareps, \bfn)$ consists of the quadratic terms in $\bfvareps$ of $\Wd (\bfI + \bfvareps, \bfn)$.
It is well known that one can restrict to symmetric $\bfvareps$.

In this section we calculate $\Wdlin$ for a general material with a stored energy $W$ that satisfies the assumptions of Section \ref{se:ExampleGeneral}: $W$ is of class $C^2$, its set of global minimizers is $SO(3)$, it satisfies the coercivity assumption \eqref{eq:Winfty}, and the restriction of the elasticity tensor $\C = D^2 W (\bfI)$ to the set of symmetric matrices is positive definite.
We will see that the final formula for $\Wdlin$ only depends on $\C$.

For simplicity, the calculations in this section are detailed in dimension $2$, while in dimension $3$ only the final result are exposed.

\subsection{2D theory}\label{subse:linear2D}

In order to linearize $\Wd (\cdot, \bfe_2)$, we let $\bfF = \bfI + \bfvareps$ with $\bfvareps$ small, which plays the role of the displacement gradient.
We can assume that $\bfvareps$ is symmetric.

We linearize the formulas \eqref{eq:formulaspolar} for the QR decomposition of $\bfF$, and obtain that $\bfF = \bfR \bfA$ with $\bfR \in SO(2)$,
\[
 \bfA = \begin{pmatrix}
 A_{11} & A_{12} \\
 0 & A_{22}
 \end{pmatrix} 
\]
and
\begin{equation}\label{eq:A11A22}
 A_{11} = \sqrt{F_{11}^2 + F_{21}^2} = 1 + \vareps_{11} + o (|\bfvareps|), \qquad A_{22} = \frac{F_{11} F_{22} - F_{12} F_{21}}{\sqrt{F_{11}^2 + F_{21}^2}} = 1 + \vareps_{22} + o (|\bfvareps|) ,
\end{equation}
while the expression of $A_{12}$ is not important in this development.
We consider the functions $A_{22}^*, A_{12}^*, A_{12}^{**}$ of Section \ref{subse:General2D}.
Using  formulas \eqref{eq:A12*}, \eqref{eq:A12**} and \eqref{eq:A11A22}, we find that their linearization is as follows:
\begin{align*}
 A_{22}^* (A_{11}) & = 1 + \frac{c_{1112} c_{1222} - c_{1122} c_{1212}}{c_{1212} c_{2222} - c_{1222}^2} (A_{11} -1) + o (A_{11} -1) \\
 & = 1 + \frac{c_{1112} c_{1222} - c_{1122} c_{1212}}{c_{1212} c_{2222} - c_{1222}^2} \vareps_{11} + o (|\bfvareps|) , \\
 A_{12}^* (A_{11}) & = \frac{c_{1122} c_{1222} - c_{1112} c_{2222}}{c_{1212} c_{2222} - c_{1222}^2} (A_{11} -1) + o (A_{11} -1) = \frac{c_{1122} c_{1222} - c_{1112} c_{2222}}{c_{1212} c_{2222} - c_{1222}^2} \vareps_{11} + o (|\bfvareps|) , \\
 A_{12}^{**} (A_{22}, A_{11}) & = - \frac{c_{1112}}{c_{1212}} (A_{11} - 1) - \frac{c_{1222}}{c_{1212}} (A_{22} - 1) + o (|A_{11} - 1| + |A_{22} - 1|) \\
 & = - \frac{c_{1112}}{c_{1212}} \vareps_{11} - \frac{c_{1222}}{c_{1212}} \vareps_{22} + o (|\bfvareps|) .
\end{align*}
Now,
\begin{align*}
 \C \begin{pmatrix}
 A_{11} -1 & A_{12}^* \\
 0 & A_{22}^* -1
 \end{pmatrix} : & \begin{pmatrix}
 A_{11} -1 & A_{12}^* \\
 0 & A_{22}^* -1
 \end{pmatrix} = \C \begin{pmatrix}
 A_{11} -1 & A_{12}^*/2 \\
 A_{12}^*/2 & A_{22}^* -1
 \end{pmatrix} : \begin{pmatrix}
 A_{11} -1 & A_{12}^*/2 \\
 A_{12}^*/2 & A_{22}^* -1
 \end{pmatrix} \\
 & = c_{1111} (A_{11} -1)^2 + c_{1212} (A_{12}^*)^2 + c_{2222} (A_{22}^* -1)^2 \\
 & \quad + 2 \left( c_{1112} (A_{11} -1) A_{12}^* + c_{1122} (A_{11} -1) (A_{22}^* -1) + c_{1222}  A_{12}^* (A_{22}^* -1) \right) ,
\end{align*}
so
\begin{align*}
 & W \begin{pmatrix}
 A_{11} & A_{12}^* (A_{11}) \\
 0 & A_{22}^* (A_{11})
 \end{pmatrix} \\
 & = \frac{1}{2} \C \begin{pmatrix}
 A_{11} -1 & A_{12}^* \\
 0 & A_{22}^* -1
 \end{pmatrix} : \begin{pmatrix}
 A_{11} -1 & A_{12}^* \\
 0 & A_{22}^* -1
 \end{pmatrix} + o (|A_{11} - 1|^2 + |A_{12}^*|^2 + |A_{22}^* - 1|^2) \\
 & = \frac{1}{2} \vareps_{11}^2 \left[ \left( c_{1111} + c_{1212} \left( \frac{c_{1122} c_{1222} - c_{1112} c_{2222}}{c_{1212} c_{2222} - c_{1222}^2} \right)^2 + c_{2222} \left(  \frac{c_{1112} c_{1222} - c_{1122} c_{1212}}{c_{1212} c_{2222} - c_{1222}^2} \right)^2 \right) \right. \\
 & \left. \qquad \quad + 2 \left( c_{1112} \frac{c_{1122} c_{1222} - c_{1112} c_{2222}}{c_{1212} c_{2222} - c_{1222}^2} + c_{1122}  \frac{c_{1112} c_{1222} - c_{1122} c_{1212}}{c_{1212} c_{2222} - c_{1222}^2} \right. \right. \\
 & \left. \left. \qquad \qquad \quad + c_{1222}  \frac{c_{1122} c_{1222} - c_{1112} c_{2222}}{c_{1212} c_{2222} - c_{1222}^2}  \frac{c_{1112} c_{1222} - c_{1122} c_{1212}}{c_{1212} c_{2222} - c_{1222}^2} \right) \right] \\
 & \qquad \quad + o (|\bfvareps|^2) \\
 &= \frac{1}{2} \left( c_{1111} - \frac{c_{1122}^2 c_{1212} - 2 c_{1112} c_{1122} c_{1222} + c_{1112}^2 c_{2222}}{c_{1212} c_{2222} - c_{1222}^2} \right) \vareps_{11}^2 + o (|\bfvareps|^2) .
\end{align*}
Similarly,
\begin{align*}
 \C \begin{pmatrix}
 A_{11} -1 & A_{12}^{**} \\
 0 & A_{22} -1
 \end{pmatrix} : & \begin{pmatrix}
 A_{11} -1 & A_{12}^{**} \\
 0 & A_{22} -1
 \end{pmatrix} = \C \begin{pmatrix}
 A_{11} -1 & A_{12}^{**}/2 \\
 A_{12}^{**}/2 & A_{22} -1
 \end{pmatrix} : \begin{pmatrix}
 A_{11} -1 & A_{12}^{**}/2 \\
 A_{12}^{**}/2 & A_{22} -1
 \end{pmatrix} \\
 & = c_{1111} (A_{11} -1)^2 + c_{1212} (A_{12}^{**})^2 + c_{2222} (A_{22} -1)^2 \\
 & \quad + 2 \left( c_{1112} (A_{11} -1) A_{12}^{**} + c_{1122} (A_{11} -1) (A_{22} -1) + c_{1222}  A_{12}^{**} (A_{22} -1) \right) ,
\end{align*}
so
\begin{align*}
 W & \begin{pmatrix}
 A_{11} & A_{12}^{**} (A_{22}, A_{11}) \\
 0 & A_{22}
 \end{pmatrix} \\
 & = \frac{1}{2} \left( c_{1111}-\frac{c_{1112}^2}{c_{1212}} \right) \vareps_{11}^2 + \left( c_{1122} -\frac{c_{1112} c_{1222} }{c_{1212}}\right) \vareps_{11} \vareps_{22} + \frac{1}{2} \left( c_{2222} -\frac{c_{1222}^2}{c_{1212}} \right) \vareps_{22}^2 + o (|\bfvareps|^2) .
\end{align*}
The final formula for $\Wdlin (\cdot, \bfe_2)$ is
\begin{equation}\label{eq:Wline2}
\begin{split}
 & \Wdlin \left( \bfvareps , \bfe_2 \right) \\
 & = \begin{cases} \textstyle \frac{1}{2}\left( c_{1111} - \frac{c_{1112}^2 c_{2222}-2 c_{1112} c_{1122} c_{1222}+c_{1122}^2 c_{1212}}{c_{1212} c_{2222} - c_{1222}^2} \right) \vareps_{11}^2 \qquad \qquad \text{if } \vareps_{22} > \frac{c_{1112} c_{1222} - c_{1122} c_{1212}}{c_{1212} c_{2222} - c_{1222}^2} \vareps_{11} , \\
 \textstyle \frac{1}{2}\left( c_{1111}-\frac{c_{1112}^2}{c_{1212}} \right) \vareps_{11}^2 + \left( c_{1122} -\frac{c_{1112} c_{1222} }{c_{1212}}\right) \vareps_{11} \vareps_{22} + \frac{1}{2} \left( c_{2222} -\frac{c_{1222}^2}{c_{1212}} \right) \vareps_{22}^2 \qquad  \text{otherwise} . 
 \end{cases}
\end{split}
\end{equation}
Notice that $\Wdlin (\cdot, \bfe_2)$ is continuous.

The formula for $\Wdlin (\bfvareps, \bfn)$ can be deduced as follows.
If $W$ is isotropic, given a unit vector $\bfn$, we consider the rotation \eqref{eq:Q2D}.
The linearization of $\Wd$ consists of the quadratic terms in $\bfvareps$ of
\[
 \Wd (\bfI + \bfvareps, \bfn) = \Wd ((\bfI + \bfvareps) \bfQ^T, \bfe_2) = \Wd ( \bfQ (\bfI + \bfvareps) \bfQ^T, \bfe_2) = \Wd ( \bfI + \bfQ \bfvareps \bfQ^T, \bfe_2) ,
\]
where we have used Proposition \ref{pr:invariance} and the isotropy of $W$.
We define $\bfvareps^{\bfn} = \bfQ \bfvareps \bfQ^T$ and apply formula above for $\bfvareps^{\bfn}$ to find that
\begin{equation}\label{eq:Wdlin2D}
 \boxed{
\begin{aligned}
 & \Wdlin \left( \bfvareps , \bfn \right) \\
 & = \begin{cases} \textstyle \frac{1}{2}\left( c_{1111} - \frac{c_{1112}^2 c_{2222}-2 c_{1112} c_{1122} c_{1222}+c_{1122}^2 c_{1212}}{c_{1212} c_{2222} - c_{1222}^2} \right) (\vareps_{11}^{\bfn})^2 \qquad \qquad \text{if } \vareps^{\bfn}_{22} > \frac{c_{1112} c_{1222} - c_{1122} c_{1212}}{c_{1212} c_{2222} - c_{1222}^2} \vareps^{\bfn}_{11} , \\
 \textstyle \frac{1}{2}\left( c_{1111}-\frac{c_{1112}^2}{c_{1212}} \right) (\vareps_{11}^{\bfn})^2 + \left( c_{1122} -\frac{c_{1112} c_{1222} }{c_{1212}}\right) \vareps^{\bfn}_{11} \vareps^{\bfn}_{22} + \frac{1}{2} \left( c_{2222} -\frac{c_{1222}^2}{c_{1212}} \right) (\vareps_{22}^{\bfn})^2 \quad \text{otherwise} ,
 \end{cases}
\end{aligned}
}
\end{equation}
with 
\begin{equation}\label{eq:Hn}
 \vareps_{11}^{\bfn} = \vareps_{22} n_1^2 - 2 \vareps_{12} n_1 n_2 + \vareps_{11} n_2^2 , \qquad
 \vareps_{22}^{\bfn} = \vareps_{11} n_1^2 + 2 \vareps_{12} n_1 n_2 + \vareps_{22} n_2^2 .
\end{equation}

In fact, formula \eqref{eq:Wdlin2D} can be simplified because of the isotropy of $W$.
Indeed, taking into account the symmetries of $\C$, we find that it acts as
\begin{align*}%\label{eq:elasticitytensor}
 \C \bfvareps : \bfvareps & = \sum_{ijkl} c_{ijkl} \vareps_{ij} \vareps_{kl} \\
 & = c_{1111} \vareps_{11}^2 + 4 c_{1112} \vareps_{11} \vareps_{12} + 2 c_{1122} \vareps_{11} \vareps_{22} + 4 c_{1212} \vareps_{12}^2 + 4 c_{1222} \vareps_{12} \vareps_{22} + c_{2222} \vareps_{22}^2 .
\end{align*}
for symmetric $\bfvareps$.
Comparing this expression with the familiar one in the isotropic case
\[
 \C \bfvareps : \bfvareps = 2 \mu |\bfvareps|^2 + \lambda (\tr \bfvareps )^2 = 2 \mu \left( \vareps_{11}^2 + 2 \vareps_{12}^2 + \vareps_{22}^2 \right) + \lambda \left(\vareps_{11}^2 + 2 \vareps_{11} \vareps_{22} + \vareps_{22}^2\right)
\]
we find that
\[
 c_{1111} = \lambda + 2 \mu , \qquad
 c_{1112} = 0 , \qquad
 c_{1122} = \lambda , \qquad
 c_{1212} = \mu , \qquad
 c_{1222} = 0 , \qquad
 c_{2222} = \lambda + 2 \mu .
\]
Therefore, the formula of $\Wdlin$ of \eqref{eq:Wdlin2D} reduces as follows:
\begin{equation}\label{eq:Wdlinison}
    \boxed{
     \Wdlin \left( \bfvareps , \bfn \right) = \begin{cases} \frac{1}{2}\left( \lambda + 2 \mu - \frac{\lambda^2}{\lambda + 2 \mu} \right) (\vareps_{11}^{\bfn})^2 & \text{if } \vareps^{\bfn}_{22} > - \frac{\lambda}{\lambda + 2 \mu} \vareps^{\bfn}_{11} , \\
     \frac{\lambda + 2 \mu}{2} (\vareps_{11}^{\bfn})^2 + \lambda \vareps^{\bfn}_{11} \vareps^{\bfn}_{22} + \frac{\lambda + 2 \mu}{2} (\vareps_{22}^{\bfn})^2 & \text{if } \vareps^{\bfn}_{22} \leq - \frac{\lambda}{\lambda + 2 \mu} \vareps^{\bfn}_{11} ,
     \end{cases}
     }
\end{equation}
with $\bfvareps^{\bfn}$ as in \eqref{eq:Hn}.

If $W$ is not isotropic, instead of formula \eqref{eq:Wdlin2D}, we have to adapt the calculations leading to \eqref{eq:Wline2} to any orthonormal basis $\{ \bft, \bfn \}$, as in the end of Subsection \ref{subse:General2D}.
The final formula is
\begin{equation}\label{eq:Wdlinaniso2D}
\begin{aligned}
 & \Wdlin \left( \bfvareps , \bfn \right) \\
 & = \begin{cases} \textstyle \frac{1}{2}\left( \tilde{c}_{\bft \bft \bft \bft} - \frac{\tilde{c}_{\bft \bft \bft \bfn}^2 \tilde{c}_{\bfn \bfn \bfn \bfn}-2 \tilde{c}_{\bft \bft \bft \bfn} \tilde{c}_{\bft \bft \bfn \bfn} \tilde{c}_{\bft \bfn \bfn \bfn}+\tilde{c}_{\bft \bft \bfn \bfn}^2 \tilde{c}_{\bft \bfn \bft \bfn}}{\tilde{c}_{\bft \bfn \bft \bfn} \tilde{c}_{\bfn \bfn \bfn \bfn} - \tilde{c}_{\bft \bfn \bfn \bfn}^2} \right) \tilde{\vareps}_{\bft \bft}^2 \qquad \qquad \text{if } \tilde{\vareps}_{\bfn \bfn} > \frac{\tilde{c}_{\bft \bft \bft \bfn} \tilde{c}_{\bft \bfn \bfn \bfn} - \tilde{c}_{\bft \bft \bfn \bfn} \tilde{c}_{\bft \bfn \bft \bfn}}{\tilde{c}_{\bft \bfn \bft \bfn} \tilde{c}_{\bfn \bfn \bfn \bfn} - \tilde{c}_{\bft \bfn \bfn \bfn}^2} \tilde{\vareps}_{\bft \bft} , \\
 \textstyle \frac{1}{2}\left( \tilde{c}_{\bft \bft \bft \bft}-\frac{\tilde{c}_{\bft \bft \bft \bfn}^2}{\tilde{c}_{\bft \bfn \bft \bfn}} \right) \tilde{\vareps}_{\bft \bft}^2 + \left( \tilde{c}_{\bft \bft \bfn \bfn} -\frac{\tilde{c}_{\bft \bft \bft \bfn} \tilde{c}_{\bft \bfn \bfn \bfn} }{\tilde{c}_{\bft \bfn \bft \bfn}}\right) \tilde{\vareps}_{\bft \bft} \tilde{\vareps}_{\bfn \bfn} + \frac{1}{2} \left( \tilde{c}_{\bfn \bfn \bfn \bfn} -\frac{\tilde{c}_{\bft \bfn \bfn \bfn}^2}{\tilde{c}_{\bft \bfn \bft \bfn}} \right) \tilde{\vareps}_{\bfn \bfn}^2 \quad \text{otherwise} ,
 \end{cases}
\end{aligned}
\end{equation}
where $\tilde{c}_{ijkl}$ are the coefficients of the elasticity tensor with respect to the basis $\{ \bft, \bfn\}$, and $\tilde{\vareps}_{ij}$ the components of the strain $\bfvareps$ with respect to the basis $\{ \bft, \bfn\}$.
They are related to the coefficients $c_{pqrs}$ with respect to the canonical basis via
\[
 \tilde{c}_{ijkl} = \sum_{p, q, r, s = 1}^2 \left( \tilde{\bfe}_i \cdot \bfe_p \right) \left( \tilde{\bfe}_j \cdot \bfe_q \right) \left( \tilde{\bfe}_k \cdot \bfe_r \right) \left( \tilde{\bfe}_l \cdot \bfe_s \right) c_{pqrs}
\]
\cite{Ting1996}, while
\[
 \tilde{\vareps}_{ij} = \sum_{p, q = 1}^2 \left( \tilde{\bfe}_i \cdot \bfe_p \right) \left( \tilde{\bfe}_j \cdot \bfe_q \right)  \vareps_{pq} .
\]
We have used a dual notation with the same meaning: the indices $i, j, k, l$ run in the set $\{ 1, 2 \}$ and also in the set $\{ \bft, \bfn \}$.
Moreover, the basis $\{ \bft, \bfn \}$ has been renamed to $\{ \tilde{\bfe}_1, \tilde{\bfe}_2 \}$.

As we can see from \eqref{eq:Wdlinaniso2D}, the linearization of $\Wd$ only depends on $W$ through its elasticity tensor.
Moreover, it satisfies the traction condition \eqref{eqn:classical-traction}.
Indeed, the $(\bft, \bfn)$ and the $(\bfn, \bfn)$ components of the Piola--Kirchhoff stress of the material given by $\Wdlin$ are, respectively,
\[
 \parderiv{\Wdlin}{\vareps_{\bft \bfn}} \left( \bfvareps , \bfn \right) = 0
\]
and
\begin{equation*}
 \parderiv{\Wdlin}{\vareps_{\bfn \bfn}} \left( \bfvareps , \bfn \right) = \max \left\{ \textstyle \left( \tilde{c}_{\bft \bft \bfn \bfn} -\frac{\tilde{c}_{\bft \bft \bft \bfn} \tilde{c}_{\bft \bfn \bfn \bfn} }{\tilde{c}_{\bft \bfn \bft \bfn}}\right) \tilde{\vareps}_{\bft \bft} +  \left( \tilde{c}_{\bfn \bfn \bfn \bfn} -\frac{\tilde{c}_{\bft \bfn \bfn \bfn}^2}{\tilde{c}_{\bft \bfn \bft \bfn}} \right) \tilde{\vareps}_{\bfn \bfn} , 0 \right\} = \max \left\{ T_{\bfn \bfn} , 0 \right\} ,
\end{equation*}
where $T_{\bfn \bfn}$ is the $(\bfn, \bfn)$ component of the Piola--Kirchhoff stress corresponding to an intact material with energy
\[
 \left( \tilde{c}_{\bft \bft \bfn \bfn} -\frac{\tilde{c}_{\bft \bft \bft \bfn} \tilde{c}_{\bft \bfn \bfn \bfn} }{\tilde{c}_{\bft \bfn \bft \bfn}}\right) \tilde{\vareps}_{\bft \bft} \tilde{\vareps}_{\bfn \bfn} + \frac{1}{2} \left( \tilde{c}_{\bfn \bfn \bfn \bfn} -\frac{\tilde{c}_{\bft \bfn \bfn \bfn}^2}{\tilde{c}_{\bft \bfn \bft \bfn}} \right) \tilde{\vareps}_{\bfn \bfn}^2 + h (\tilde{\vareps}_{\bft \bft}, \tilde{\vareps}_{\bft \bfn})
\]
for an arbitrary function $h$.
For the sake of symmetry, we choose the $h$ so that the energy is
\[
 \frac{1}{2} \left( \tilde{c}_{\bft \bft \bft \bft} -\frac{\tilde{c}_{\bft \bft \bft \bfn}^2}{\tilde{c}_{\bft \bfn \bft \bfn}} \right) \tilde{\vareps}_{\bft \bft}^2 + \left( \tilde{c}_{\bft \bft \bfn \bfn} -\frac{\tilde{c}_{\bft \bft \bft \bfn} \tilde{c}_{\bft \bfn \bfn \bfn} }{\tilde{c}_{\bft \bfn \bft \bfn}}\right) \tilde{\vareps}_{\bft \bft} \tilde{\vareps}_{\bfn \bfn} + \frac{1}{2} \left( \tilde{c}_{\bfn \bfn \bfn \bfn} -\frac{\tilde{c}_{\bft \bfn \bfn \bfn}^2}{\tilde{c}_{\bft \bfn \bft \bfn}} \right) \tilde{\vareps}_{\bfn \bfn}^2 ,
\]
but this energy depends on $\bfn$, so in general it does not correspond to a single intact material.

\subsection{3D theory}

An analogous calculation can be done in 3D.
We do not repeat the argument but write down the final formulas.
We make the following abbrevations:
\[
 \bar{\bfvareps} = (\vareps_{11}, 2 \vareps_{12}, \vareps_{22}) , \qquad \hat{\bfvareps} = (\vareps_{11}, 2 \vareps_{12}, \vareps_{22}, \vareps_{33}) ,
\]
$\nabla A_{13}^*, \nabla A_{23}^*, \nabla A_{33}^*$ are the row vectors in $\R^3$ solving the system
\[
 \begin{pmatrix}
 c_{1313} & c_{1323} & c_{1333} \\
 c_{1323} & c_{2323} & c_{2333} \\
 c_{1333} & c_{2333} & c_{3333}
 \end{pmatrix}
 \begin{pmatrix}
 \nabla A_{13}^* \\
 \nabla A_{23}^* \\
 \nabla A_{33}^* 
 \end{pmatrix} = -
 \begin{pmatrix}
 c_{1113} & c_{1213} & c_{1322} \\
 c_{1123} & c_{1223} & c_{2223} \\
 c_{1133} & c_{1233} & c_{2233}
 \end{pmatrix} ,
\]
while
$\nabla A_{13}^{**}, \nabla A_{23}^{**}$ are the row vectors in $\R^4$ solving the system
\[
\begin{pmatrix}
 c_{1313} & c_{1323} \\
 c_{1323} & c_{2323} \\
 \end{pmatrix} \begin{pmatrix}
 \nabla A_{13}^{**} \\
 \nabla A_{23}^{**}
 \end{pmatrix} = -
 \begin{pmatrix}
 c_{1113} & c_{1213} & c_{1322} & c_{1333} \\
 c_{1123} & c_{1223} & c_{2223} & c_{2333}
 \end{pmatrix} .
\]
Then, $\Wdlin (\bfvareps, \bfe_3)$ equals
\begin{equation}\label{eq:Wdline3case1}
\begin{split}
 & \frac{1}{2} \Big[ c_{1111} \vareps_{11}^2 + 4 c_{1112} \vareps_{11} \vareps_{12} + 2 c_{1113} \vareps_{11} \nabla A_{13}^* \cdot \bar{\bfvareps} + 2 c_{1122} \vareps_{11} \vareps_{22} + 2 c_{1123} \vareps_{11} \nabla A_{23}^* \cdot \bar{\bfvareps} \\
 & + 2 c_{1133} \vareps_{11} \nabla A_{33}^* \cdot \bar{\bfvareps} + 4 c_{1212} \vareps_{12}^2 + 4 c_{1213} \vareps_{12} \nabla A_{13}^* \cdot \bar{\bfvareps} + 4 c_{1222} \vareps_{12} \vareps_{22} + 4 c_{1223} \vareps_{12} \nabla A_{23}^* \cdot \bar{\bfvareps} \\
 & + 4 c_{1233} \vareps_{12} \nabla A_{33}^* \cdot \bar{\bfvareps} + c_{1313} \left( \nabla A_{13}^* \cdot \bar{\bfvareps} \right)^2 + 2 c_{1322} \nabla A_{13}^* \cdot \bar{\bfvareps} \vareps_{22} + 2 c_{1323} \nabla A_{13}^* \cdot \bar{\bfvareps} \nabla A_{23}^* \cdot \bar{\bfvareps} \\
 & + 2 c_{1333} \nabla A_{13}^* \cdot \bar{\bfvareps} \nabla A_{33}^* \cdot \bar{\bfvareps} + c_{2222} \vareps_{22}^2 + 2 c_{2223} \vareps_{22} \nabla A_{23}^* \cdot \bar{\bfvareps} + 2 c_{2233} \vareps_{22} \nabla A_{33}^* \cdot \bar{\bfvareps} + c_{2323} \left( \nabla A_{23}^* \cdot \bar{\bfvareps} \right)^2 \\
 & + 2 c_{2333} \nabla A_{23}^* \cdot \bar{\bfvareps} \nabla A_{33}^* \cdot \bar{\bfvareps} + c_{3333} \left( \nabla A_{33}^* \cdot \bar{\bfvareps} \right)^2 \Big] ,
\end{split}
\end{equation}
if $\vareps_{33} \geq \nabla A_{33}^* \cdot \bar{\bfvareps}$, while it equals
\begin{equation}\label{eq:Wdline3case2}
\begin{split}
 & \frac{1}{2} \Big[ c_{1111} \vareps_{11}^2 + 4 c_{1112} \vareps_{11} \vareps_{12} + 2 c_{1113} \vareps_{11} \nabla A_{13}^{**} \cdot \hat{\bfvareps} + 2 c_{1122} \vareps_{11} \vareps_{22} + 2 c_{1123} \vareps_{11} \nabla A_{23}^{**} \cdot \hat{\bfvareps} \\
 & + 2 c_{1133} \vareps_{11} \vareps_{33} + 4 c_{1212} \vareps_{12}^2 + 4 c_{1213} \vareps_{12} \nabla A_{13}^{**} \cdot \hat{\bfvareps} + 4 c_{1222} \vareps_{12} \vareps_{22} + 4 c_{1223} \vareps_{12} \nabla A_{23}^{**} \cdot \hat{\bfvareps} \\
 & + 4 c_{1233} \vareps_{12} \vareps_{33} + c_{1313} \left( \nabla A_{13}^{**} \cdot \hat{\bfvareps} \right)^2 + 2 c_{1322} \nabla A_{13}^{**} \cdot \hat{\bfvareps} \vareps_{22} + 2 c_{1323} \nabla A_{13}^{**} \cdot \hat{\bfvareps} \nabla A_{23}^{**} \cdot \hat{\bfvareps} \\
 & + 2 c_{1333} \nabla A_{13}^{**} \cdot \hat{\bfvareps} \vareps_{33} + c_{2222} \vareps_{22}^2 + 2 c_{2223} \vareps_{22} \nabla A_{23}^{**} \cdot \hat{\bfvareps} + 2 c_{2233} \vareps_{22} \vareps_{33} + c_{2323} \left( \nabla A_{23}^{**} \cdot \hat{\bfvareps} \right)^2 \\
 & + 2 c_{2333} \nabla A_{23}^{**} \cdot \hat{\bfvareps} \vareps_{33} + c_{3333} \vareps_{33}^2 \Big] ,
\end{split}
\end{equation}
if $\vareps_{33} < \nabla A_{33}^* \cdot \bar{\bfvareps}$.

In the presence of isotropy, the final formula is
\begin{equation}
\boxed{
\begin{aligned}
     & \Wdlin \left( \bfvareps , \bfe_3 \right) = \\
     & \begin{cases} \textstyle \frac{1}{2} \left( \lambda + 2 \mu + \frac{3\lambda^2}{\lambda + 2 \mu} \right) (\vareps_{11}^2 + \vareps_{22}^2 ) + \left( \lambda + \frac{3\lambda^2}{\lambda + 2 \mu} \right) \vareps_{11} \vareps_{22} + 2 \mu \vareps_{12}^2 & \text{if } \vareps_{33} \geq \frac{\lambda}{\lambda + 2 \mu} (\vareps_{11} + \vareps_{22}) , \\
     \textstyle \frac{\lambda + 2 \mu}{2} (\vareps_{11}^2 + \vareps_{22}^2 + \vareps_{33}^2) + \lambda ( \vareps_{11} \vareps_{22} + \vareps_{11} \vareps_{33} + \vareps_{22} \vareps_{33} ) + 2 \mu  \vareps_{12}^2 & \text{if } \vareps_{33} < \frac{\lambda}{\lambda + 2 \mu} (\vareps_{11} + \vareps_{22}) ,
     \end{cases}
     \end{aligned}
 }
\end{equation}
and the formula for a general unit vector $\bfn$ is calculated as follows: we consider any $\bfQ \in SO(3)$ such that $\bfQ \bfn = \bfe_3$, as in \eqref{eq:Q}, and use that
\[
 \Wdlin (\bfvareps, \bfn) = \Wdlin (\bfQ \bfvareps \bfQ^T , \bfe_3) .
\]

If $W$ is not isotropic, then $\Wdlin (\bfvareps, \bfn)$ is calculated as in \eqref{eq:Wdline3case1}--\eqref{eq:Wdline3case2}, but replacing $c_{ijkl}$ with $\tilde{c}_{ijkl}$ and $\vareps_{ij}$ with $\tilde{\vareps}_{ij}$, similarly to Subsection \ref{subse:linear2D}.

%%%%%%%%%%%%%%%%%%%%%
%%%%%%%%%%%%%%%%%%%%%
%%%%%%%%%%%%%%%%%%%%%
%%%%%%%%%%%%%%%%%%%%%

%%%%%%%%%%%%%%%%%%%%%
%%%%%%%%%%%%%%%%%%%%%
%%%%%%%%%%%%%%%%%%%%%
%%%%%%%%%%%%%%%%%%%%%
\section{Numerical Implementation}\label{se:numerical}

We use the finite element method to compute numerical solutions, implemented in the open source FEniCS library, which is now used widely for problems in mechanics, e.g., \cite{logg2012automated,barchiesi2021computation}; our implementation adapts the code from \cite{natarajan2019phase}.
We use $\bfy(\bfx)$ and $\bfd(\bfx)$ as the primary unknown functions, and approximate them using standard piecewise linear elements.
We consider two classes of problems.

First, we compute the energy-minimizing deformation $\bfy$, keeping $\bfd$ fixed at a prescribed configuration that corresponds roughly to a crack; we refer to this type of problem as a ``frozen crack''.
For these problems, the energy is minimized under load using the FEniCS Dolfin Adjoint library \cite{mitusch2019dolfin}. 

Second, we compute the fully-coupled problem of crack growth, wherein both $\bfy$ and $\bfd$ evolve to minimize the energy under a time-dependent external loading.
To compute the minimization at each load step, we find that alternate minimization over $\bfy$ and $\bfd$ works well, following \cite{natarajan2019phase}.
Specifically, at each load step, we perform first a minimization over $\bfy$ with $\bfd$ fixed, and then minimize over $\bfd$ with $\bfy$ fixed, and alternate this until we converge.
That is, given the iterates $\bfy^{i}$ and $\bfd^{i}$, we obtain the next iterate using:
\begin{equation}\label{eq:staggered}
\begin{split}
     & \bfy^{i+1} = \argmin_{\bfy} E[\bfy, \bfd^{i}],\\
     &\bfd^{i+1} = \argmin_{\bfd} E[\bfy^{i+1}, \bfd]
\end{split}
\end{equation}
and we repeat until $\max\{\lVert \bfy^{i+1} - \bfy^{i} \rVert ,  \lVert \bfd^{i+1} - \bfd^{i} \rVert\} < 10^{-3}$. 
Once we reach a converged solution, we update the time-dependent load.

An important aspect of crack growth is that the minimization over $\bfd$ must respect the physics that cracks do not heal even when the load is reversed.
Following standard approaches, e.g. \cite{Bourdin07}, we implement this irreversibility as follows.
We first choose a critical value $d_c$, taken to be $0.95$ in this paper.
If the converged vector field $\bfd_0$ in a loading step satisfies $|\bfd_0| \geq d_c$, we impose the pointwise constraint $\bfd = \frac{\bfd_0}{|\bfd_0|}$ for all future times, and minimize over $\bfd$ that respect this constraint.

In addition to preventing the magnitude of $\bfd$ from decreasing once it satisfies $|\bfd| \geq d_c$, the constraint also prevents the crack normal from changing orientation.
This is important in the setting considered in this paper.
Specifically, if we did not prevent the orientation of the crack from changing, we could concieve of a sequence of loading steps that would lead to the unphysical result that a crack normal changes orientation even if the crack -- defined by the magnitude of $\bfd$ -- is fixed.

We use $\epsilon = 0.015$ in a nondimensionalized length scale for the phase-field regularization parameter introduced in \eqref{eqn:full-model}.
Given this value of $\epsilon$, we use a mesh that is sufficiently refined as to resolve the interfaces.

%%%%%%%%%%%%%%%%%%%%%
%%%%%%%%%%%%%%%%%%%%%
%%%%%%%%%%%%%%%%%%%%%
%%%%%%%%%%%%%%%%%%%%%
\section{Numerical Calculations}\label{se:calculations}

We solve the model numerically for three settings: 
first, we look at the elastic response of frozen cracks in which $\bfd$ is held fixed to a given configuration, as a means of testing the efficacy of the model; 
second, we look at a configuration with cyclic shear loading that leads to crack closure as well as crack branching;
and, third, we look at fracture in a cavity under cyclic loading that leads to crack closure and the growth of multiple cracks, motivated by recent experiments on similar settings.

All of the settings considered above use the large-deformation neo-Hookean model in 2D (Section \ref{subse:NeoH2D}).
We choose material parameters corresponding roughly to PMMA, which is a common material for experimental research \cite{sane2001interconversion}.
We use $\lambda = \SI{2.576}{\giga\pascal}$, $\mu = \SI{1.104}{\giga\pascal}$, and $G_c = \SI{285}{\newton\per\meter}$ for the frozen cracks.
For growing cracks, we set $\lambda = 0$ to avoid Poisson's ratio effects.
All plots of energy density and stress are normalized by $\mu$.

%%%%%%%%%%%%%%%%%%%%%
%%%%%%%%%%%%%%%%%%%%%
%%%%%%%%%%%%%%%%%%%%%
%%%%%%%%%%%%%%%%%%%%%
\subsection{Mechanical Response of a frozen crack}

Figure \ref{fig:frozen-crack} shows the specimen the chosen configuration for $\bfd$.
We use a circular crack to ensure that the model works even in configurations that are far from a classical sharp crack.
We compute the elastic response of the specimen for the fundamental deformation modes from Figure \ref{fig:modes}. 
The deformation modes are applied by imposing the expressions in Section \ref{subse:QR} as affine boundary conditions on the specimen.
For instance, considering mode (a) and using that $\bfn = \bfe_2$ in this specimen, we define $\bfF^0 = \bfe_1\otimes\bfe_1 + F^0_{22} \bfe_2\otimes\bfe_2$ with $F^0_{22} > 1$; and then minimize the elastic energy subject to $\bfy = \bfF^0 \bfx$ on the boundary of the specimen.

\begin{figure*}[ht!]
	\includegraphics[width=0.67\textwidth]{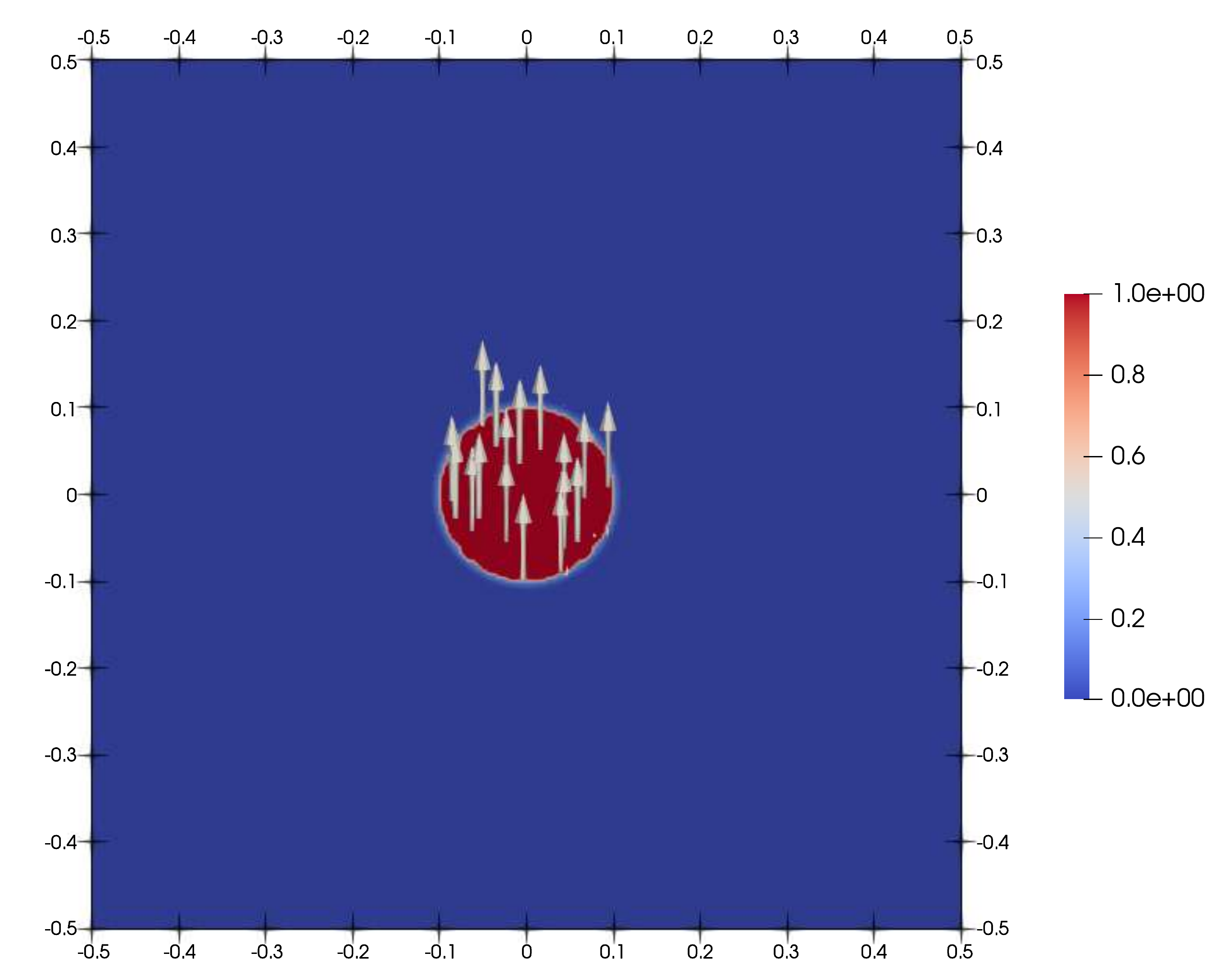}
	\caption{The chosen configuration for the $\bfd$ field for the frozen crack calculations. We use a circular configuration to test the effective energy in configurations that are far from an ideal sharp crack.}
	\label{fig:frozen-crack}
\end{figure*}

Figures \ref{fig:energy and traction at modes of zero energy} and \ref{fig:energy and traction at modes of positive energy} show the elastic energy density, deformation, and the traction on the plane with normal $\bfe_2$ for the fundamental deformation modes. 
Specifically, Figure \ref{fig:energy and traction at modes of zero energy} shows modes (a) and (b) for which our idealized crack should have no elastic response.
We see that the elastic energy in the crack is much smaller than the intact material; the deformations in the crack are much larger than in the intact material; and there are no shear or tensile normal tractions in the crack.

Figure \ref{fig:energy and traction at modes of positive energy} plots the same quantities for modes (c), (d), (e).
For modes (c) and (e), we notice that the elastic energy density, deformation, and traction are uniform to the precision of the numerical approximations; the specimen behaves as expected for a crack that is closing, and the traction across the crack faces are indeed compressive.
For mode (d), on the other hand, we find that the specimen has the clear signature of the crack in the elastic energy density, the deformation, and the traction.
This can be understood as arising due to an interplay between the imposed affine boundary conditions and the fact that the crack faces pull apart due to lateral shrinkage driven by the load -- i.e., a Poisson's ratio effect -- because the crack faces cannot support tension.
To show this, we use affine boundary conditions that are set up to give the normal stress in the vertical direction to be $0$, and find that the deformation and elastic energy density are indeed uniform in this setting (Fig.\ \ref{fig:case d with lambda =0.85}); i.e., when allowed to relax the stress, the specimen behaves like an idealized sharp crack.

\begin{figure}[h!]
\centering
	\subfloat[Elastic energy, overlaid on a mesh showing deformation, for deformation mode (a)]
	{\includegraphics[width=0.44\textwidth]{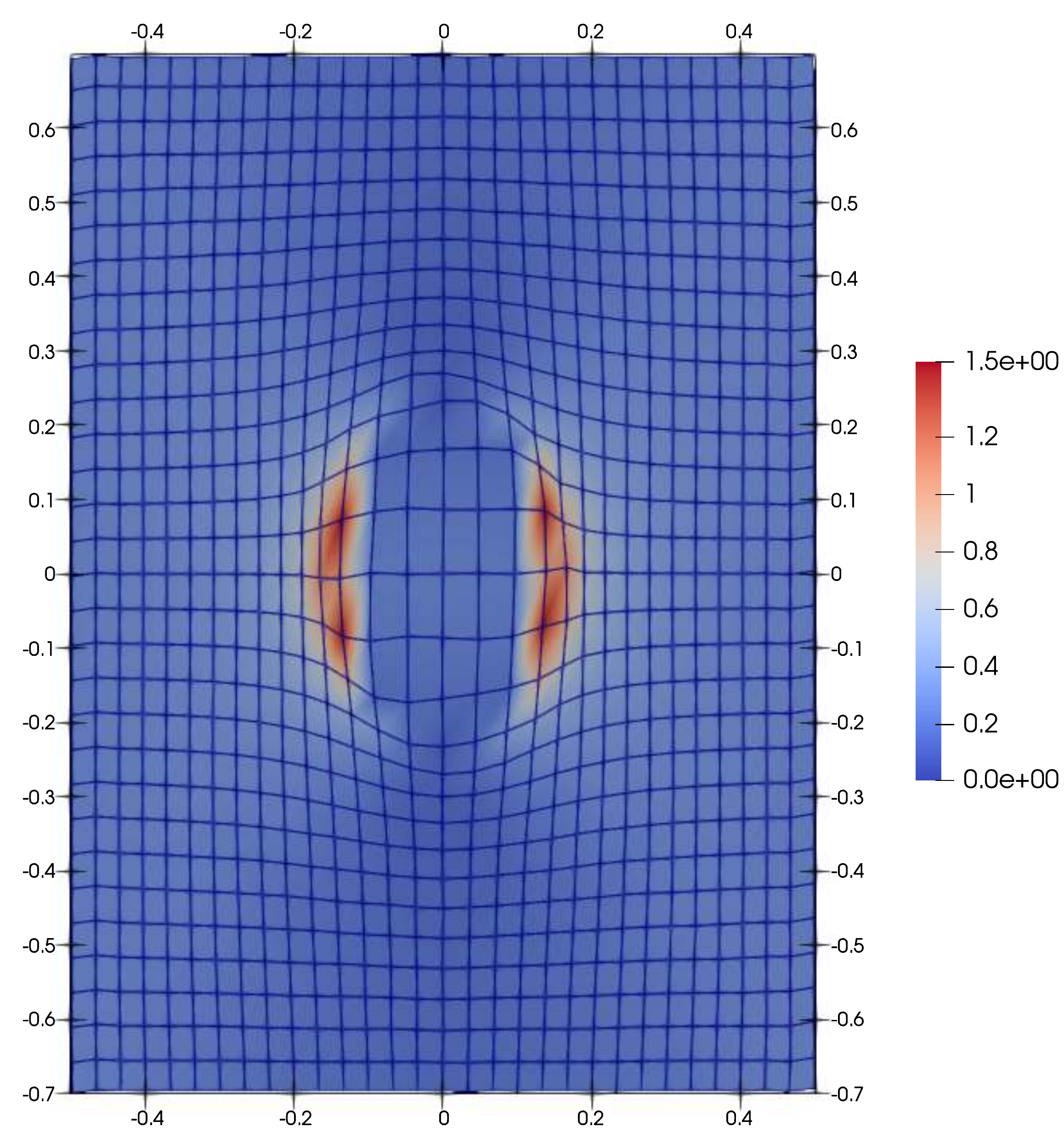}}
    \hfill
	\subfloat[Traction for deformation mode (a)]
	{\includegraphics[width=0.44\textwidth]{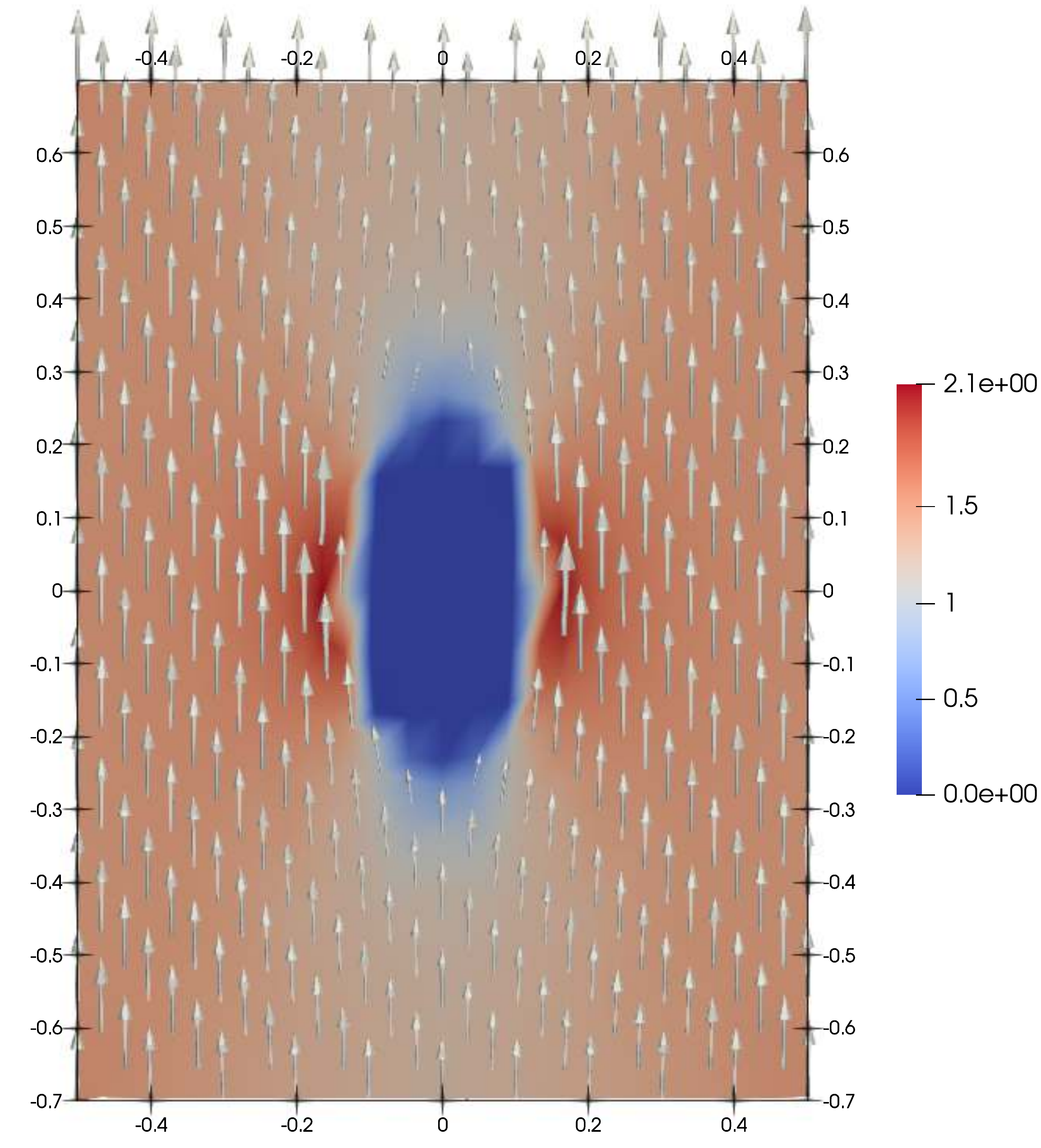}}
	\\
	\subfloat[Elastic energy, overlaid on a mesh showing deformation, for deformation mode (b)]
	{\includegraphics[width=0.44\textwidth]{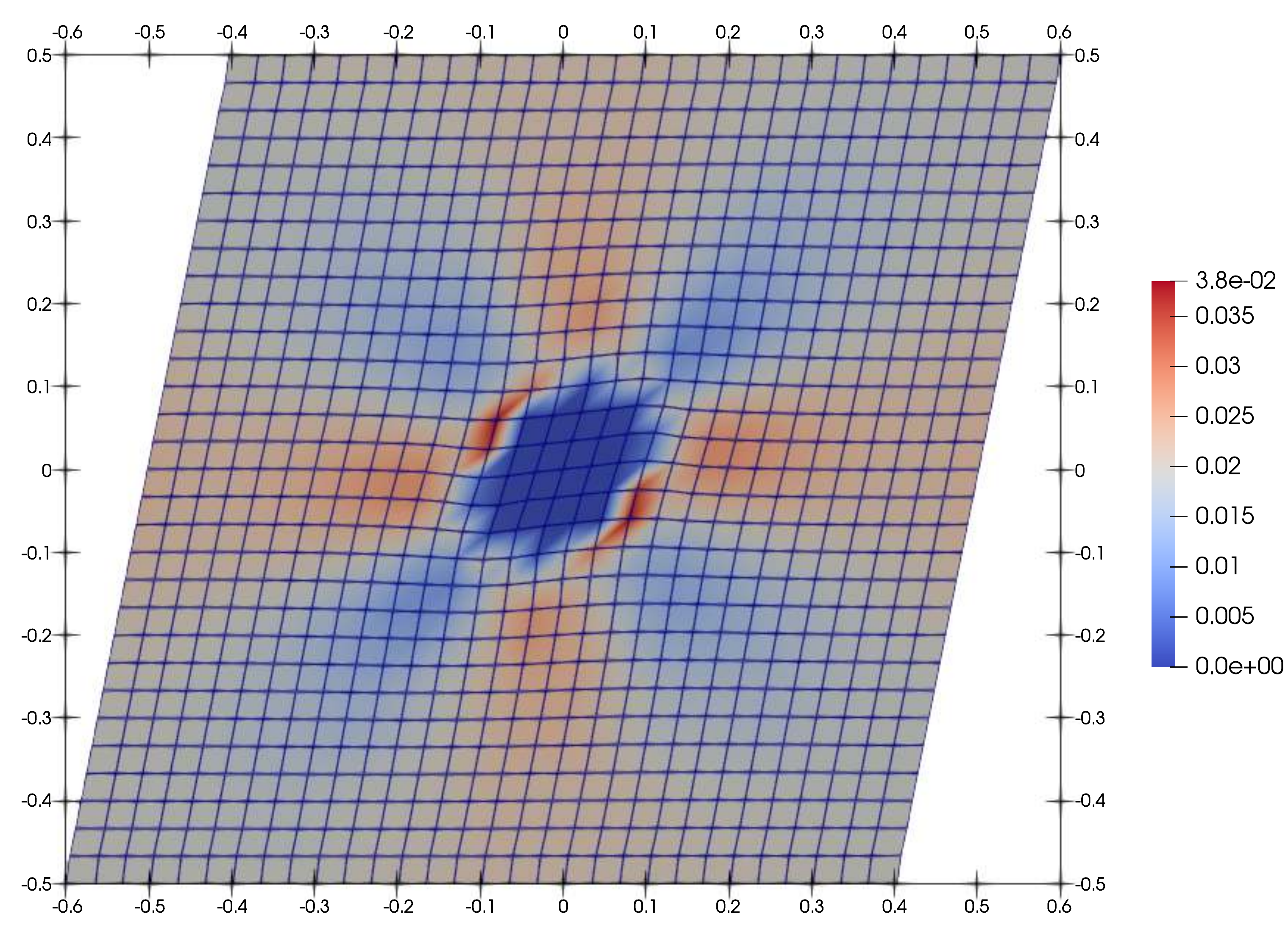}}
    \hfill
	\subfloat[Traction for deformation mode (b)]
	{\includegraphics[width=0.44\textwidth]{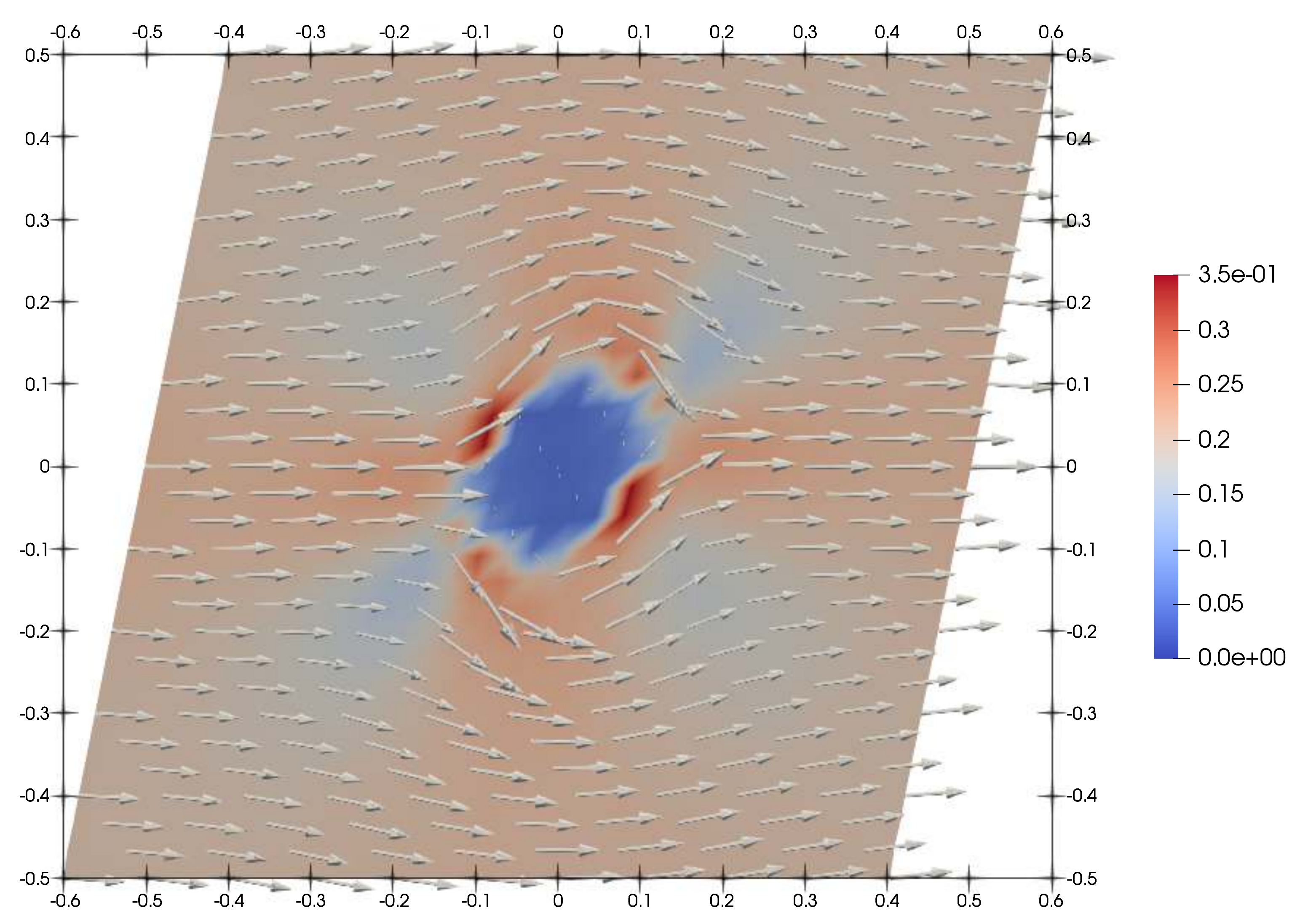}}
	\caption{Left column: elastic energy density (overlaid on a mesh that shows the deformation); right column: traction on the plane with normal $\bfe_2$ for the deformation modes that should have zero effective crack energy.}
	\label{fig:energy and traction at modes of zero energy}
\end{figure}

\begin{figure}[h!]
\centering
	\subfloat[Elastic energy, overlaid on a mesh showing deformation,  for deformation mode (c)]
	{\includegraphics[width=0.42\textwidth]{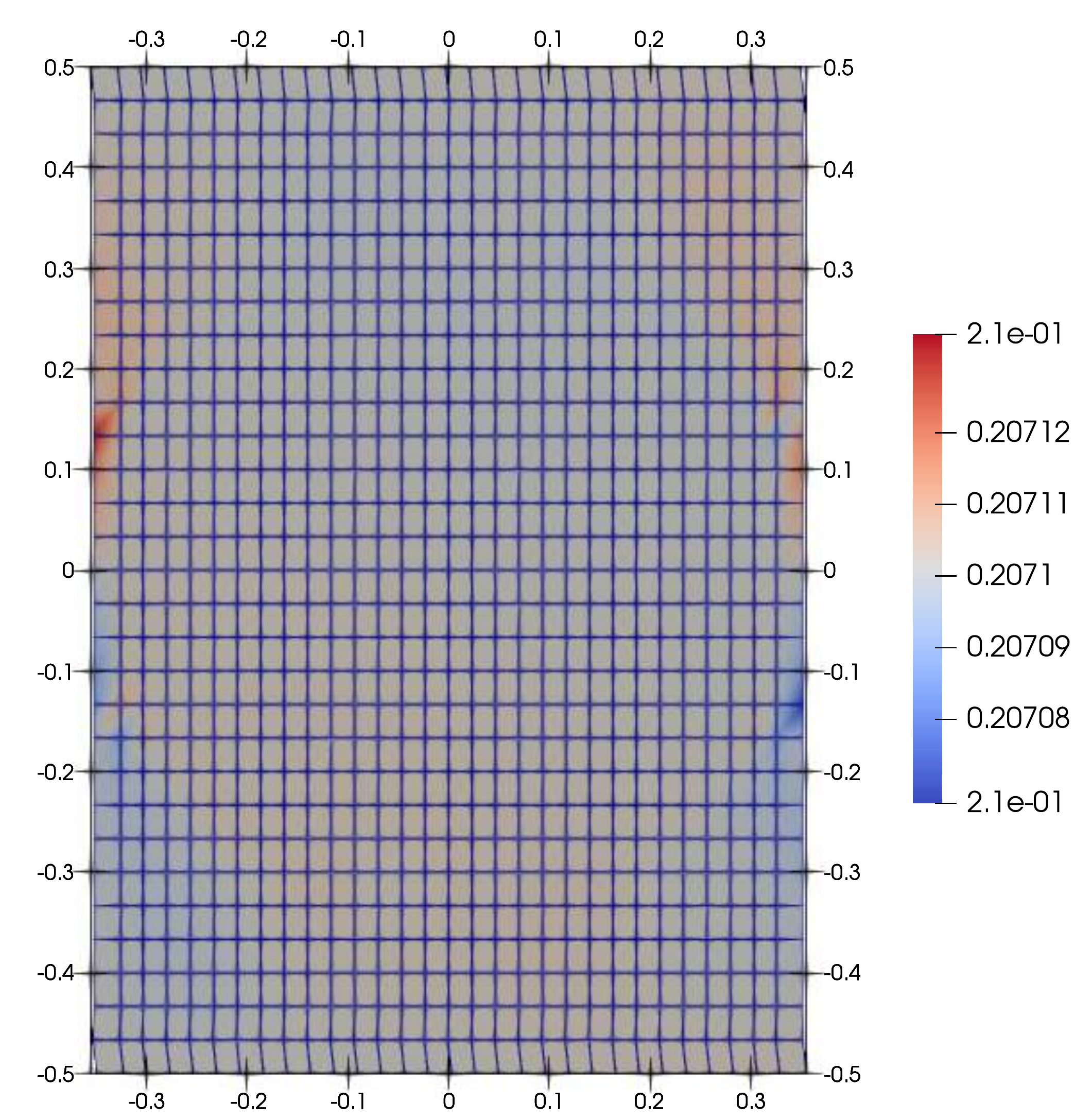}}
    \hfill
	\subfloat[Traction for deformation mode (c)]
	{\includegraphics[width=0.42\textwidth]{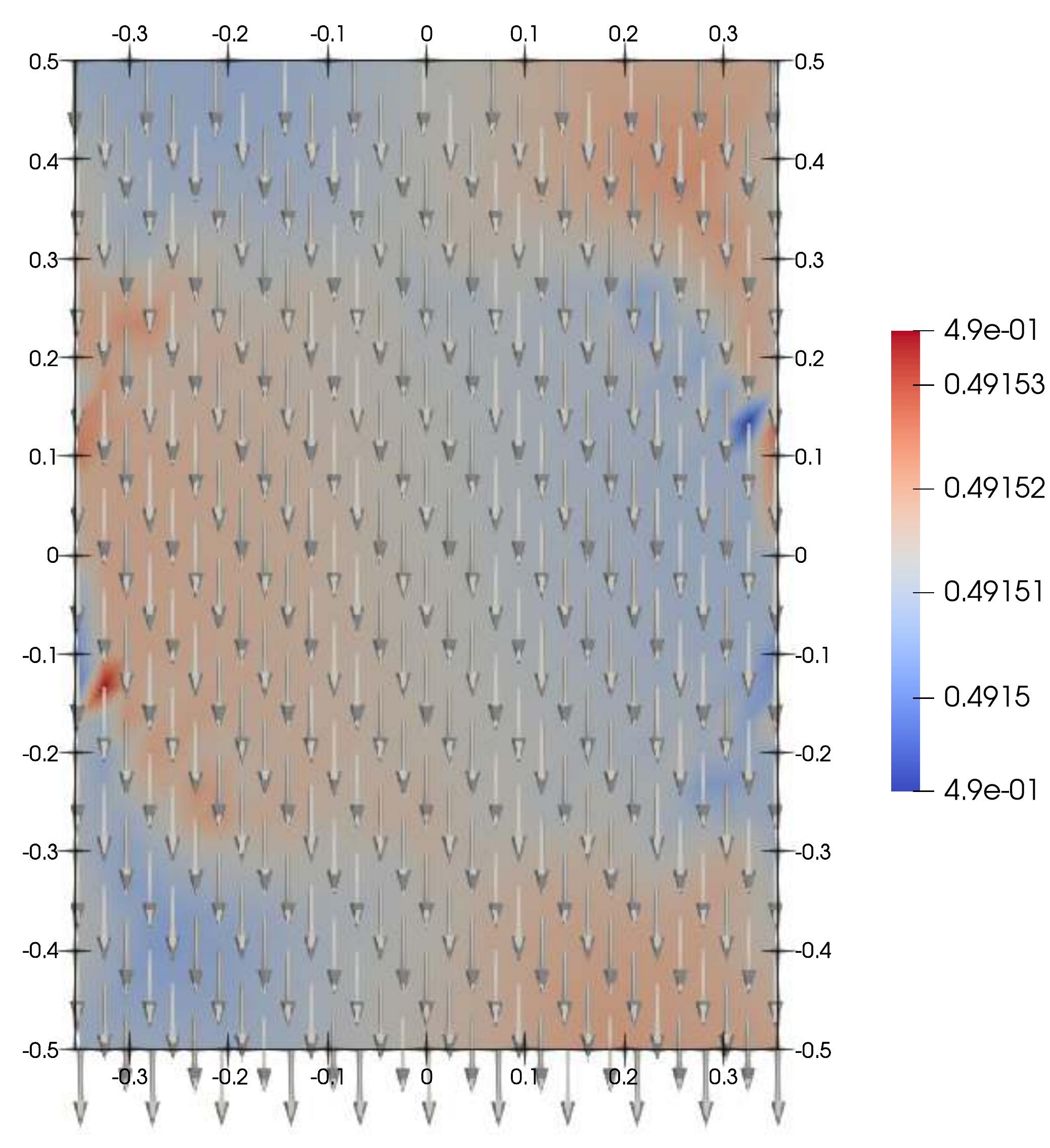}}
	\\
	\subfloat[Elastic energy, overlaid on a mesh showing deformation,  for deformation mode (d)]
	{\includegraphics[width=0.42\textwidth]{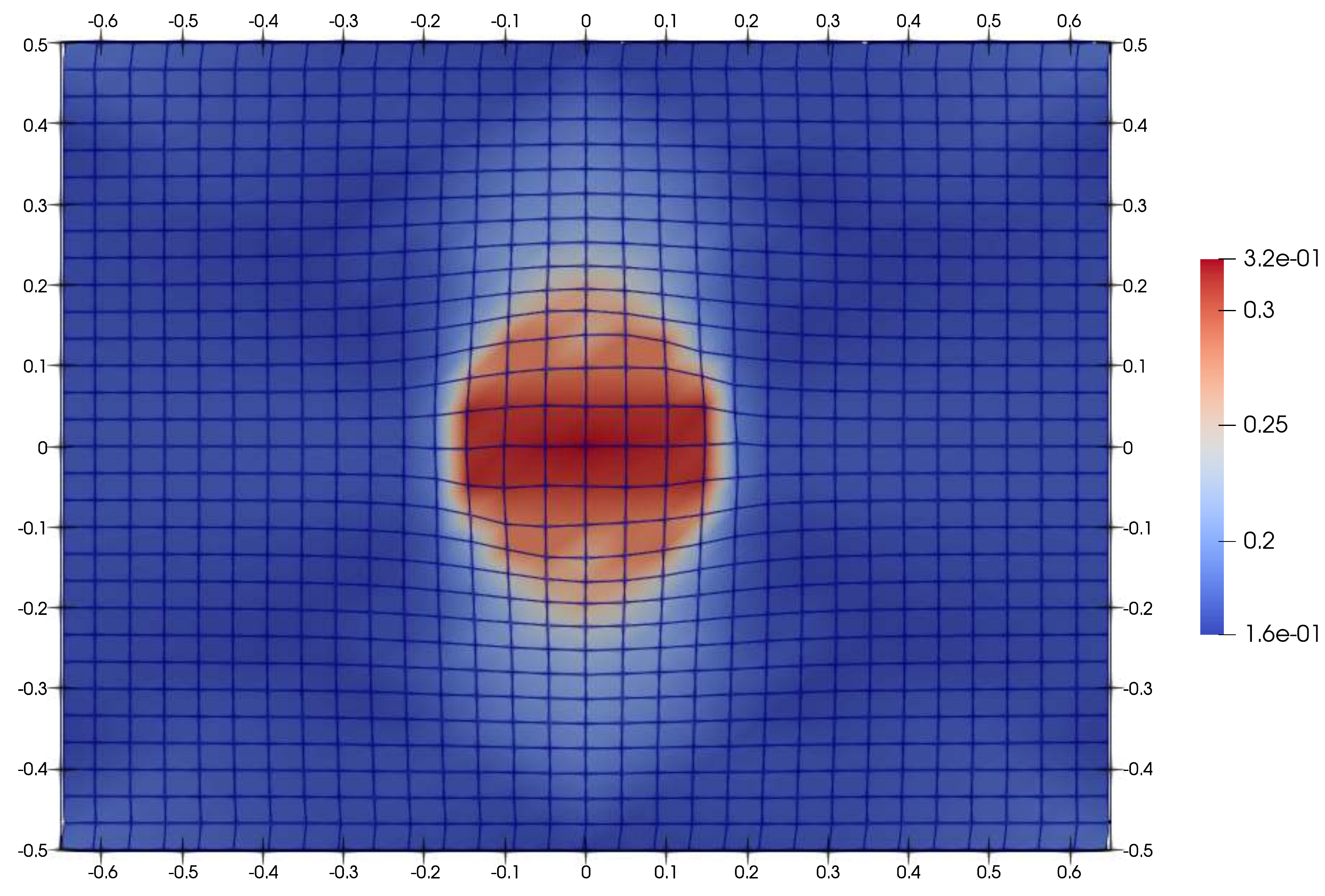}}
    \hfill
	\subfloat[Traction for deformation mode (d)]
	{\includegraphics[width=0.42\textwidth]{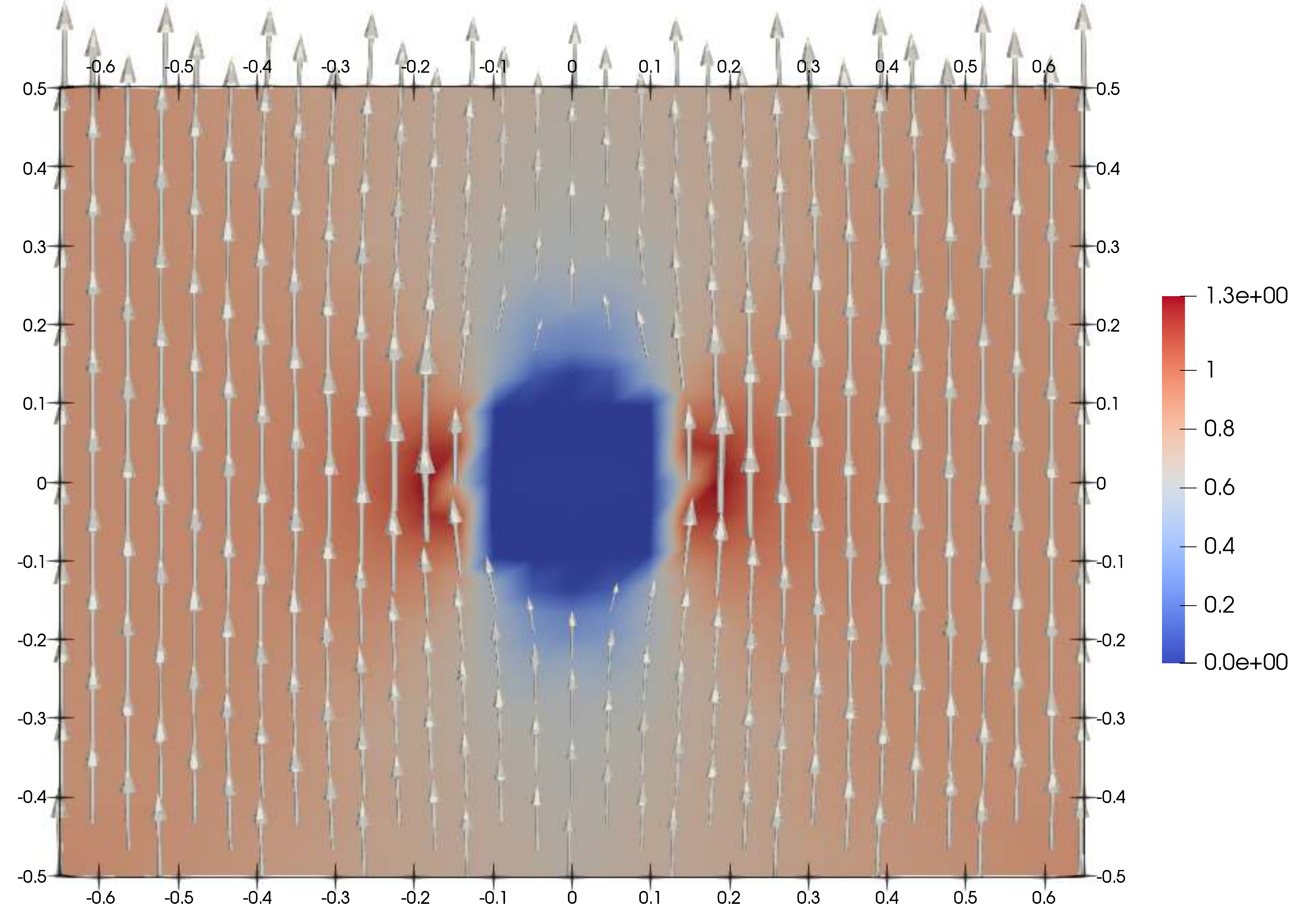}}
	\\
	\subfloat[Elastic energy, overlaid on a mesh showing deformation,  for deformation mode (e)]
	{\includegraphics[width=0.42\textwidth]{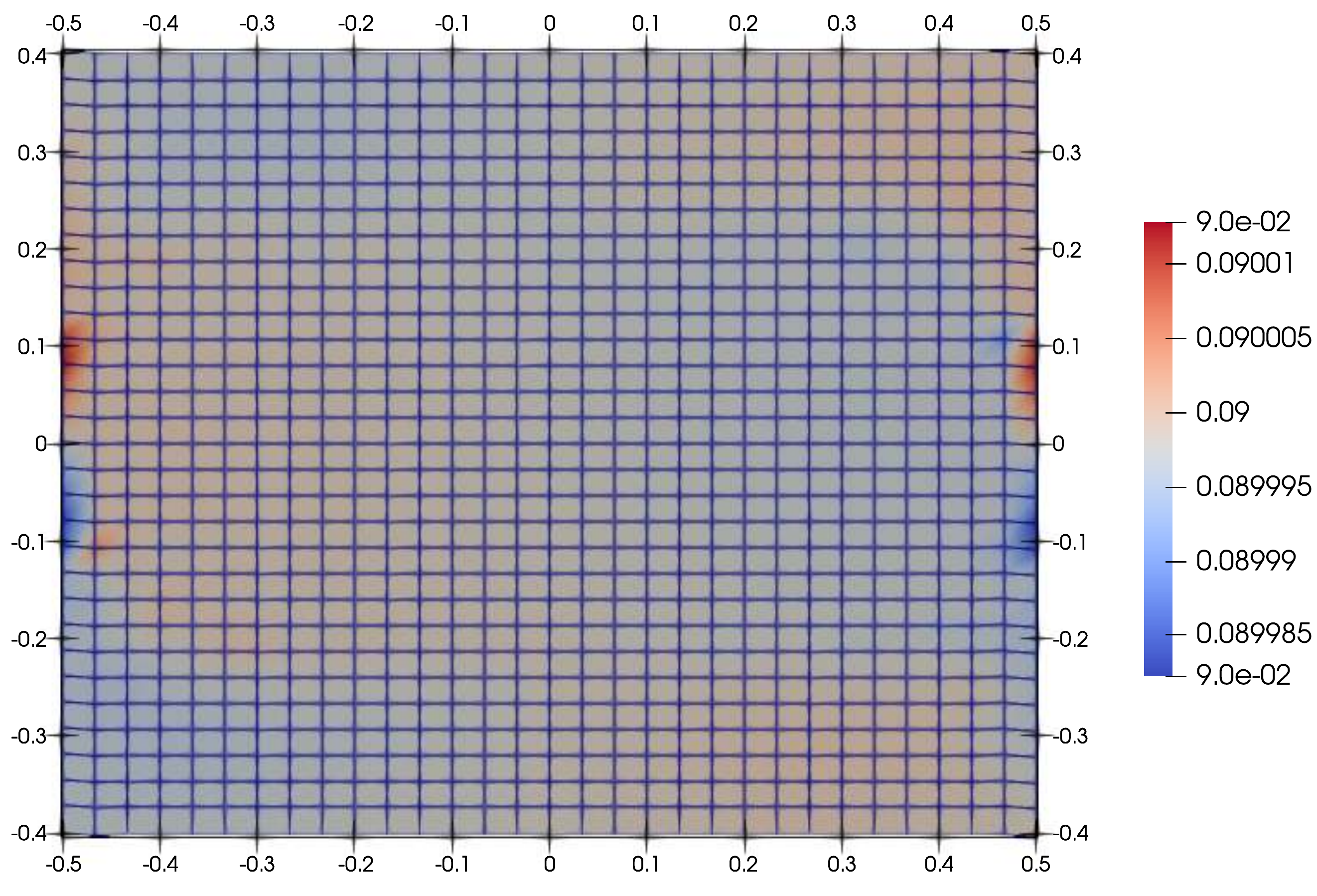}}
    \hfill
	\subfloat[Traction for deformation mode (e)]
	{\includegraphics[width=0.42\textwidth]{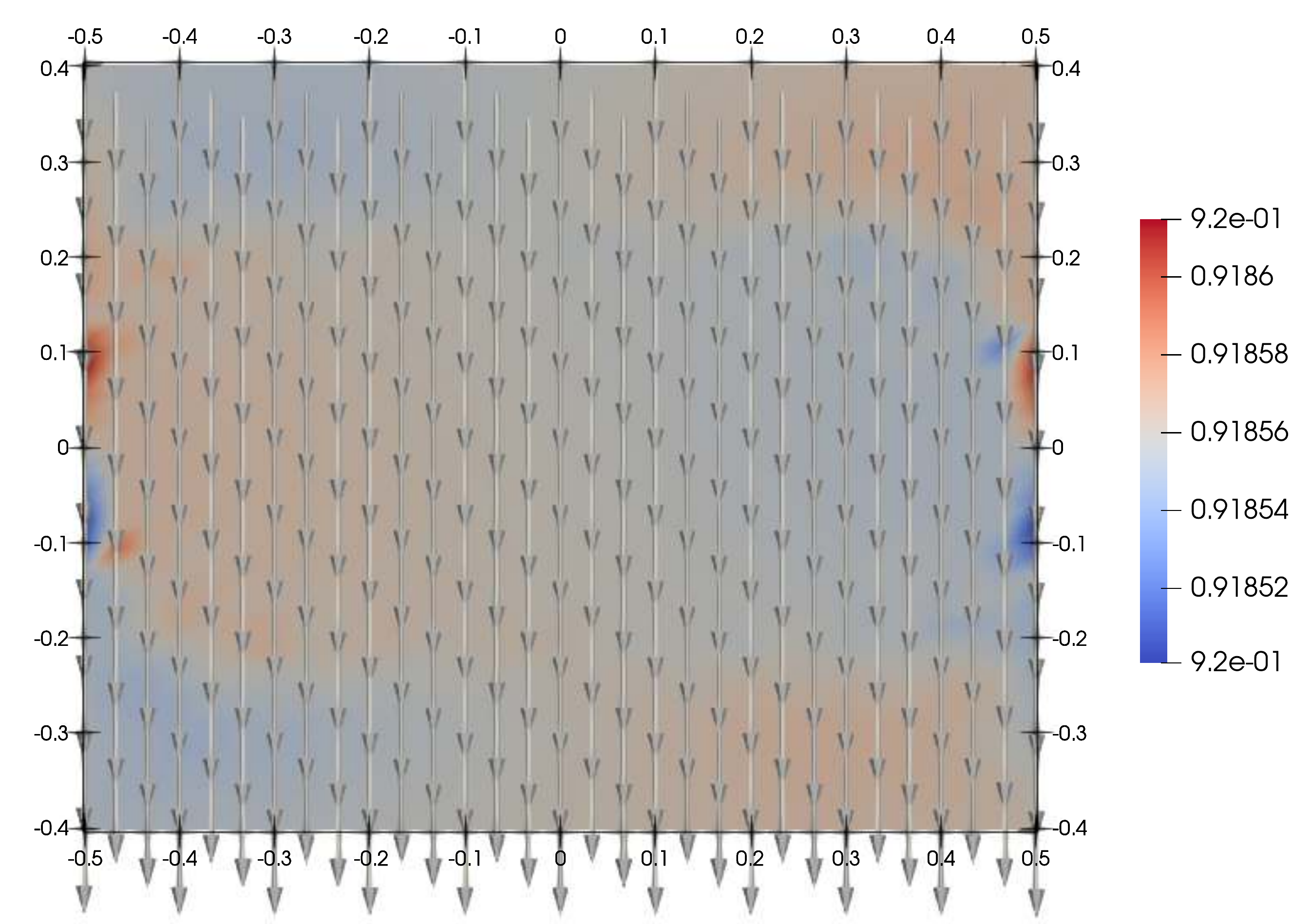}}
	\caption{Left column: elastic energy density (overlaid on a mesh that shows the deformation); right column: traction on the plane with normal $\bfe_2$ for the deformation modes that should have nonzero effective crack energy.}
	\label{fig:energy and traction at modes of positive energy}
\end{figure}

\begin{figure*}[ht!]
    \centering
	\includegraphics[width=0.67\textwidth]{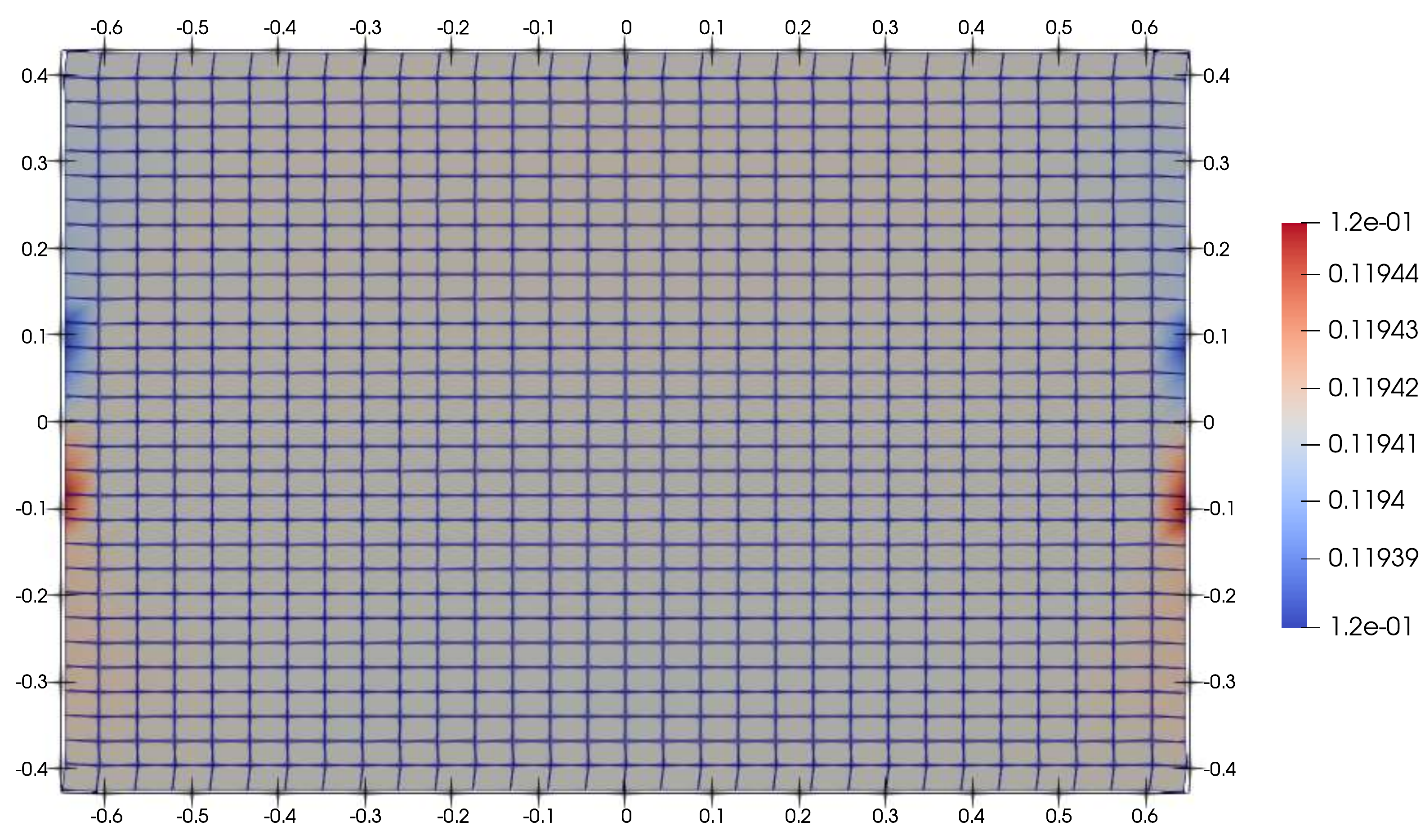}
	\caption{Mode (d), with the affine boundary conditions set up to effectively allow relaxation in the vertical direction. The elastic energy density and the deformation are both essentially uniform.}
	\label{fig:case d with lambda =0.85}
\end{figure*}

%%%%%%%%%%%%%%%%%%%%%
%%%%%%%%%%%%%%%%%%%%%
%%%%%%%%%%%%%%%%%%%%%
%%%%%%%%%%%%%%%%%%%%%
\subsection{Crack Growth under Cyclic Shear Loading with Crack Face Contact}

We now consider an example in which cracks change direction, close, and branch due to a cyclic shear loading.
We use a specimen as in Figure \ref{fig:crack growth}(a) that contains an initial crack. 
Our boundary conditions are as follows: on the bottom face, we fix the displacement to zero; on the left and right faces, we fix the traction to zero; and on the top face, we fix the vertical displacement to zero and the horizontal displacement to cause shearing.

First, we shear the specimen to the right, and that causes the initial crack to kink, i.e., change direction, and grow towards the bottom-right (Figs.\ \ref{fig:crack growth}(bc)).
This is consistent with the crack normal being aligned with the direction of maximum tension.
We notice 2 stress concentrations: one is from the crack tip, and the other is at the point that the crack kinks.
The stress concentration at the kink can be understood as, roughly, due to the re-entrant corner.

Next, we reverse the shear on the specimen to the left, and that causes a branch crack to nucleate at the kink -- driven by the high stresses there -- and grow towards the right (Figs.\ \ref{fig:crack growth}(de)).
The direction of maximum tension would suggest that the crack grow towards the top-right, but we impose constraints -- requiring that both components of $\bfd$ are non-negative  -- to drive the crack to grow horizontally after branching.
We do this to set up a situation without reentrant-corner-like geometries, enabling us to better understand the stresses at the branch point.

We highlight that the crack that grew under the shearing to the right now closes and the crack faces contact.
From Figure \ref{fig:crack growth}(e), we see that the elastic energy shows only a very small signature of the presence of the closed crack, suggesting that the model works well in capturing the response of closed cracks.

We also highlight that are higher stresses near the branch point, which would not occur in an idealized Y-configuration crack pattern with a straight upper crack branch.
These stresses occur because $\bfd$ is a smooth field and it rotates as it transitions between crack branches.
That is, the second argument of the effective crack energy density $\Wd(\nabla\bfy,\bfn)$ takes a range of values, and hence the response deviates from the idealized sharp crack response.

\begin{figure}[h!]
\centering
    \subfloat[The initial crack configuration before cyclic shear loading.]{\includegraphics[width=0.45\textwidth]{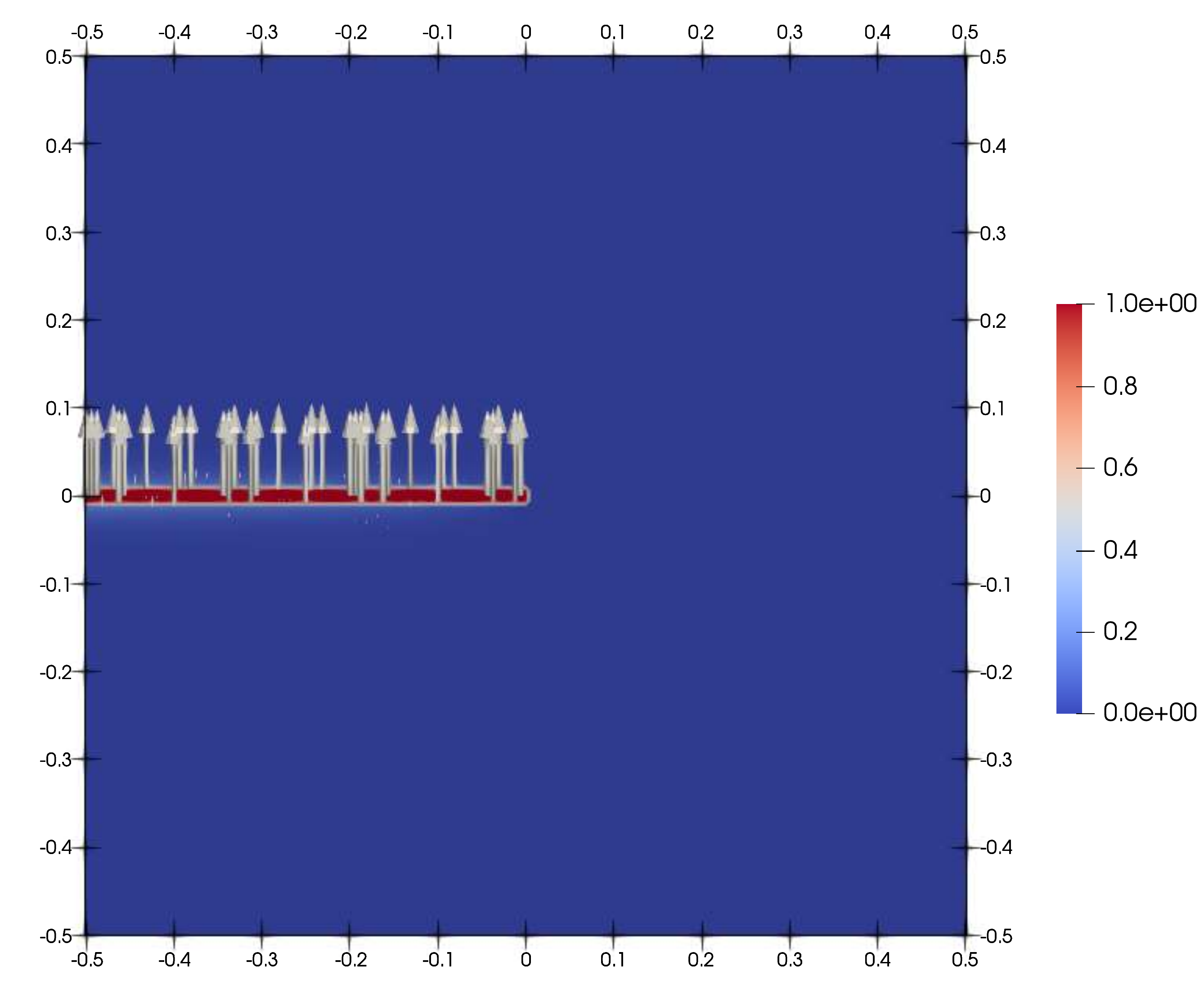}}
    \\
 	\subfloat[Crack changing direction under shear to the right.]
 	{\includegraphics[width=0.45\textwidth]{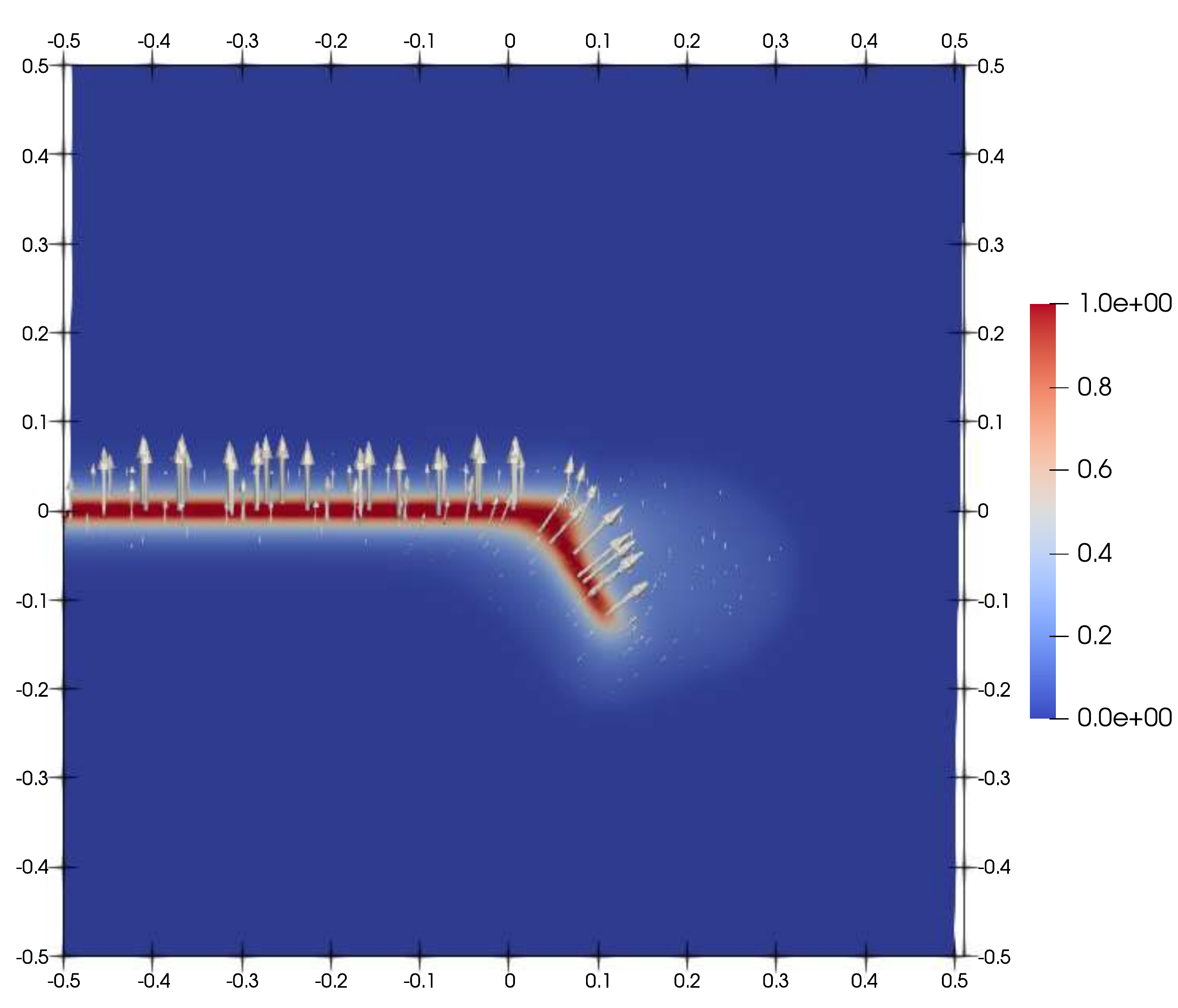}}
     \hfill
 	\subfloat[Elastic energy density under shear to the right.]
 	{\includegraphics[width=0.45\textwidth]{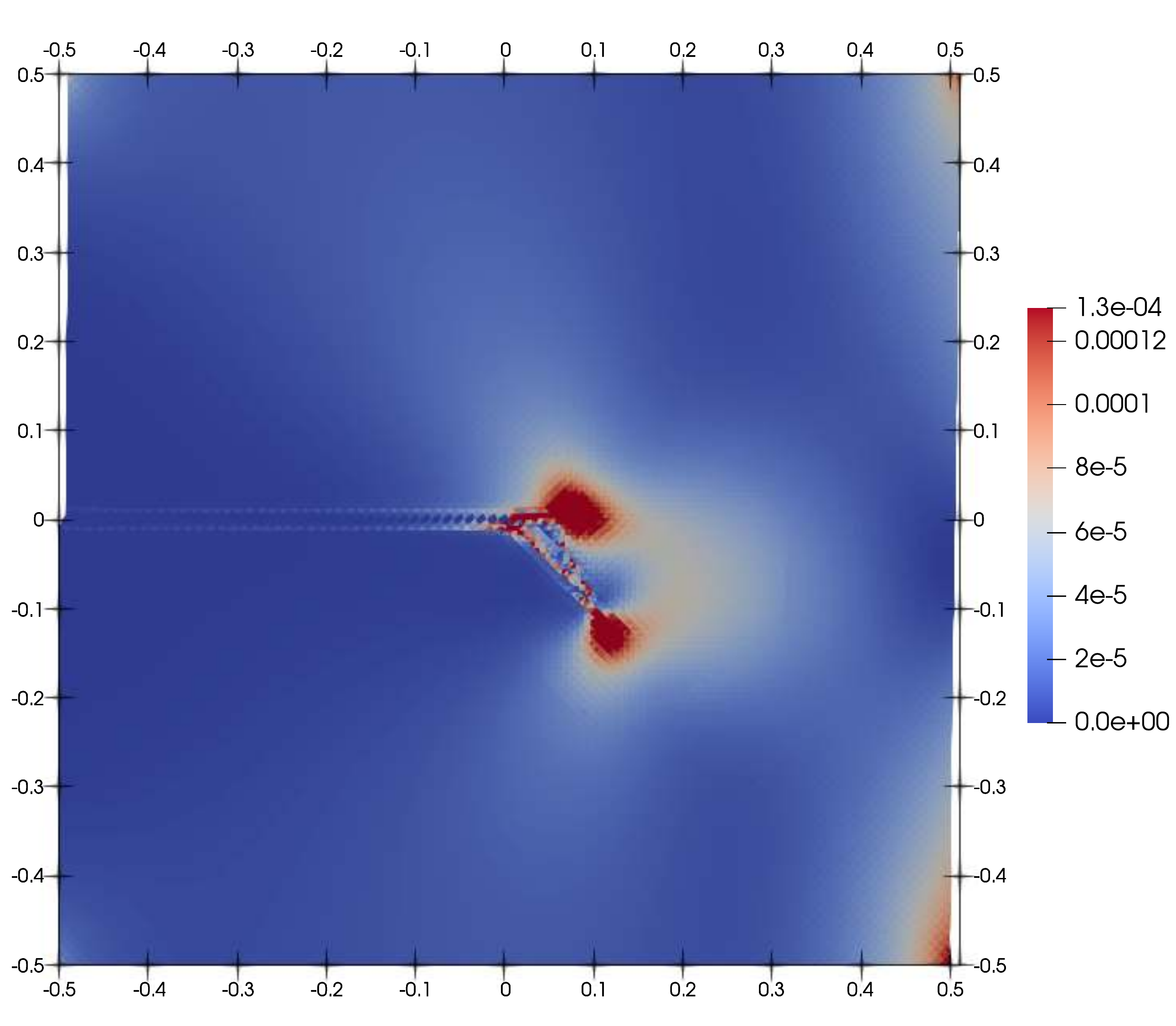}}
	\\
	\subfloat[Crack branching under subsequent shear to the left.]
	{\includegraphics[width=0.45\textwidth]{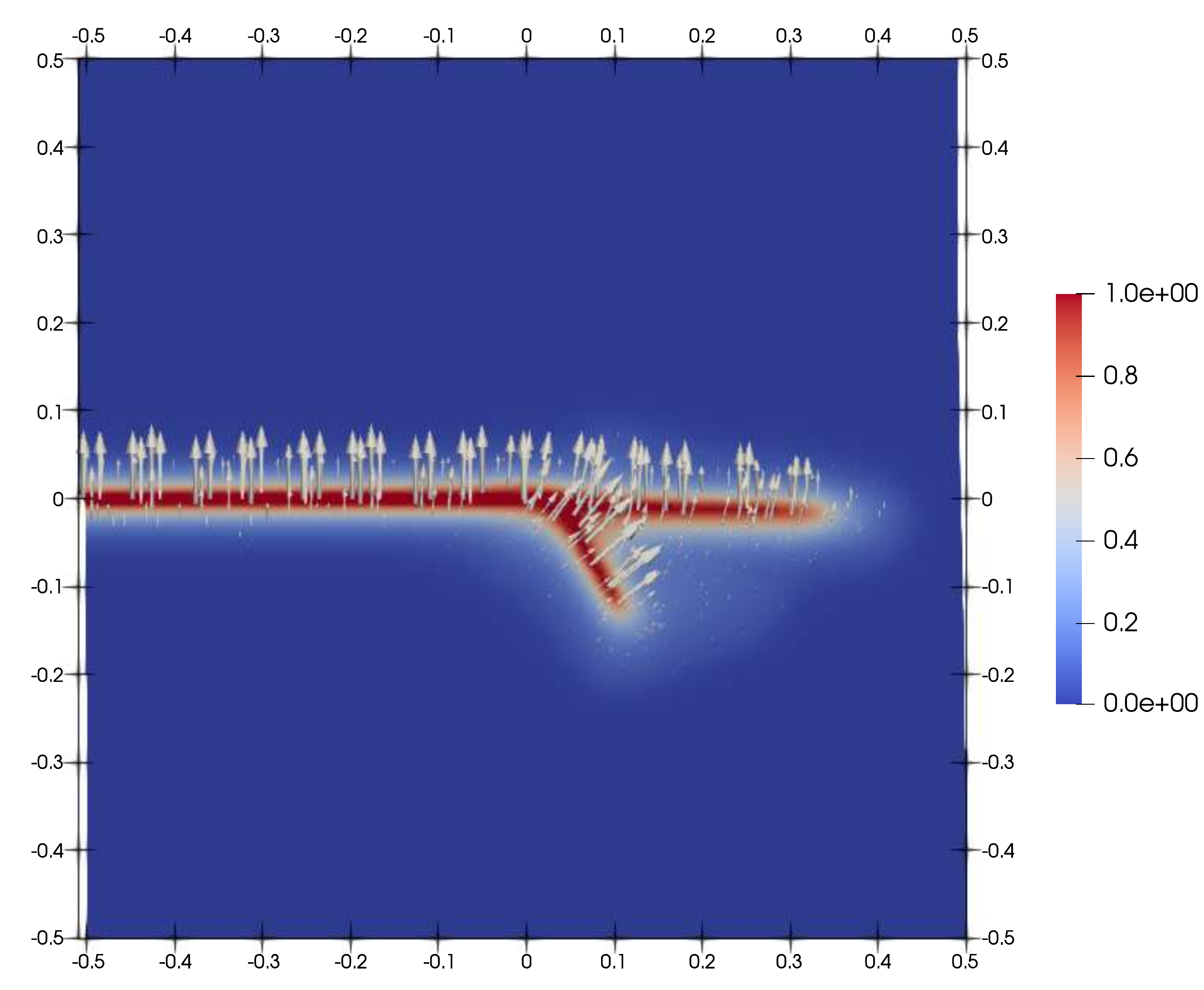}}
    \hfill
	\subfloat[Elastic energy density under subsequent shear to the left.]
	{\includegraphics[width=0.45\textwidth]{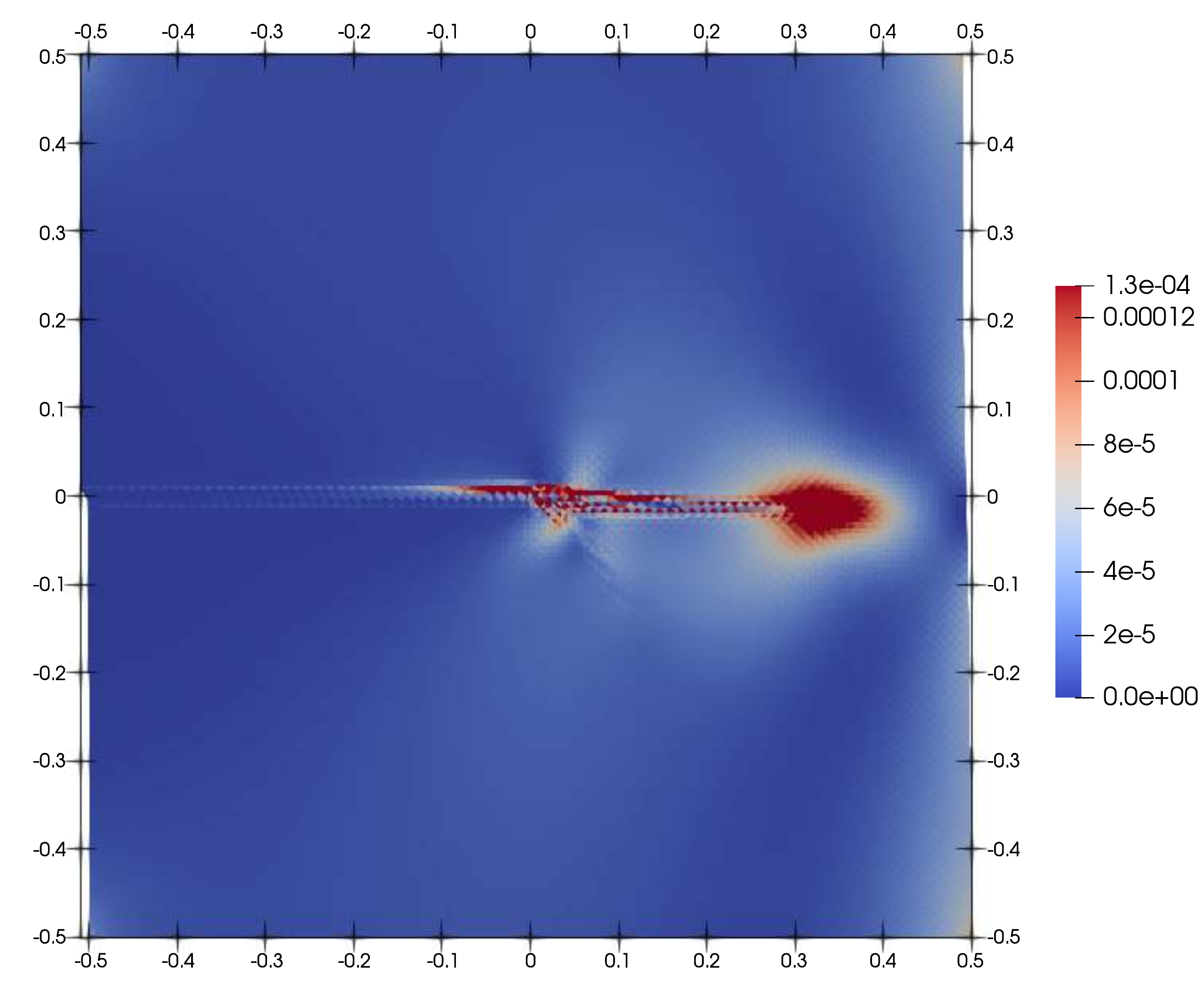}}
	\caption{Crack configuration and elastic energy density under cyclic shear loading.}
	\label{fig:crack growth}
\end{figure}

%%%%%%%%%%%%%%%%%%%%%
%%%%%%%%%%%%%%%%%%%%%
%%%%%%%%%%%%%%%%%%%%%
%%%%%%%%%%%%%%%%%%%%%

\subsection{Cracks in a Cavity under Cyclic Loading}

We consider the growth of fracture patterns in a cavity that is subject to a remote load.
While our work is motivated by recent carefully controlled experiments that show a range of interesting behavior \cite{mijailovic2021localized,milner2021dynamic,raayai2019intimate,kim2020extreme}, these experiments have additional physics such as inertia and cavitation that is not considered here.

We consider a square specimen with a circular cavity as in Figure \ref{fig:cavity-fracture}(a). 
We begin with 8 initial short pre-cracks.
We apply the remote loading by imposing affine boundary conditions, i.e., imposing $\bfy = \bfF^0 \bfx$ on the outer boundary, where $\bfF^0$ is proportional to the identity.
The surface of the cavity is traction free.

Figure \ref{fig:cavity-fracture}(b) shows the elastic energy density of the specimen initially under compression as a baseline.
That is, the elastic energy density is what we would have in an intact uncracked specimen.

Figures \ref{fig:cavity-fracture}(c,d) show the crack configuration and elastic energy density, respectively, when the specimen is then loaded under tension.
We find that 4 of the 8 cracks grow while the others do not, and the elastic energy shows the expected regularized stress concentration at the crack tips.
We highlight that the loading had to be imposed in small increments to capture the crack configuration shown in the figure, because the cracks would grow rapidly and reach the boundary soon after the state shown in the figures.

Figures \ref{fig:cavity-fracture}(e,f) show the crack configuration and elastic energy density, respectively, when the specimen is subsequently loaded under compression.
The crack configuration is largely the same, though the cracks have become visually narrower; this is due to our approach to healing, wherein we allow healing when $\bfd$ has magnitude below a critical value (Section \ref{se:numerical}).
An important highlight is that the elastic energy density in Figure \ref{fig:cavity-fracture}(f) is identical to the uncracked case in Figure \ref{fig:cavity-fracture}(b), despite the completely different crack configurations in these settings.
The elastic response of the cracked configuration under compression is identical to the uncracked configuration under compression because all the cracks have closed, suggesting that the model is working well in capturing the desired response.

\begin{figure}[h!]
	\subfloat[Initial unloaded configuration with pre-cracks.]
	{\includegraphics[width=0.42\textwidth]{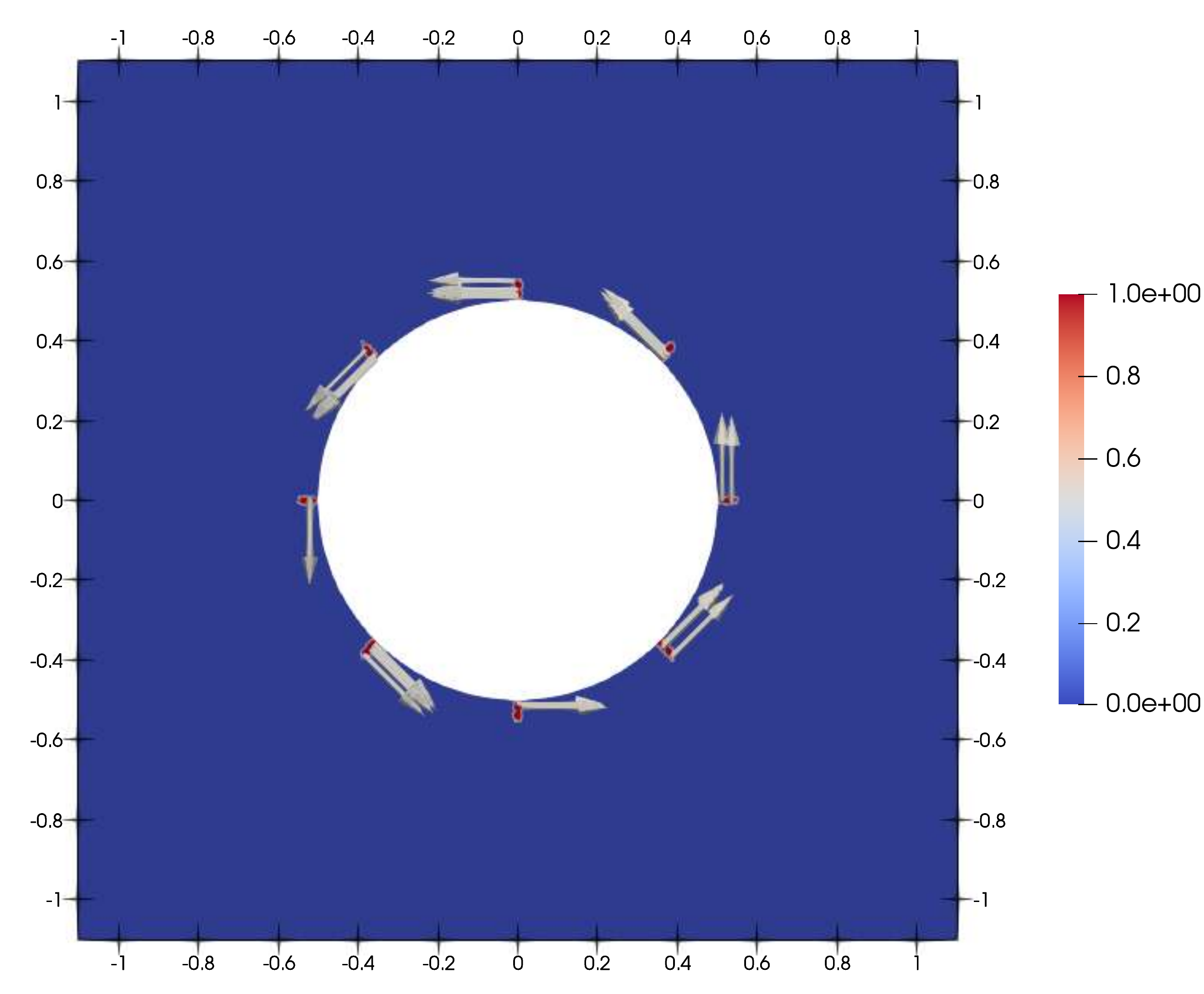}}
    \hfill
	\subfloat[Elastic energy density of the initial configuration under compression.]
	{\includegraphics[width=0.42\textwidth]{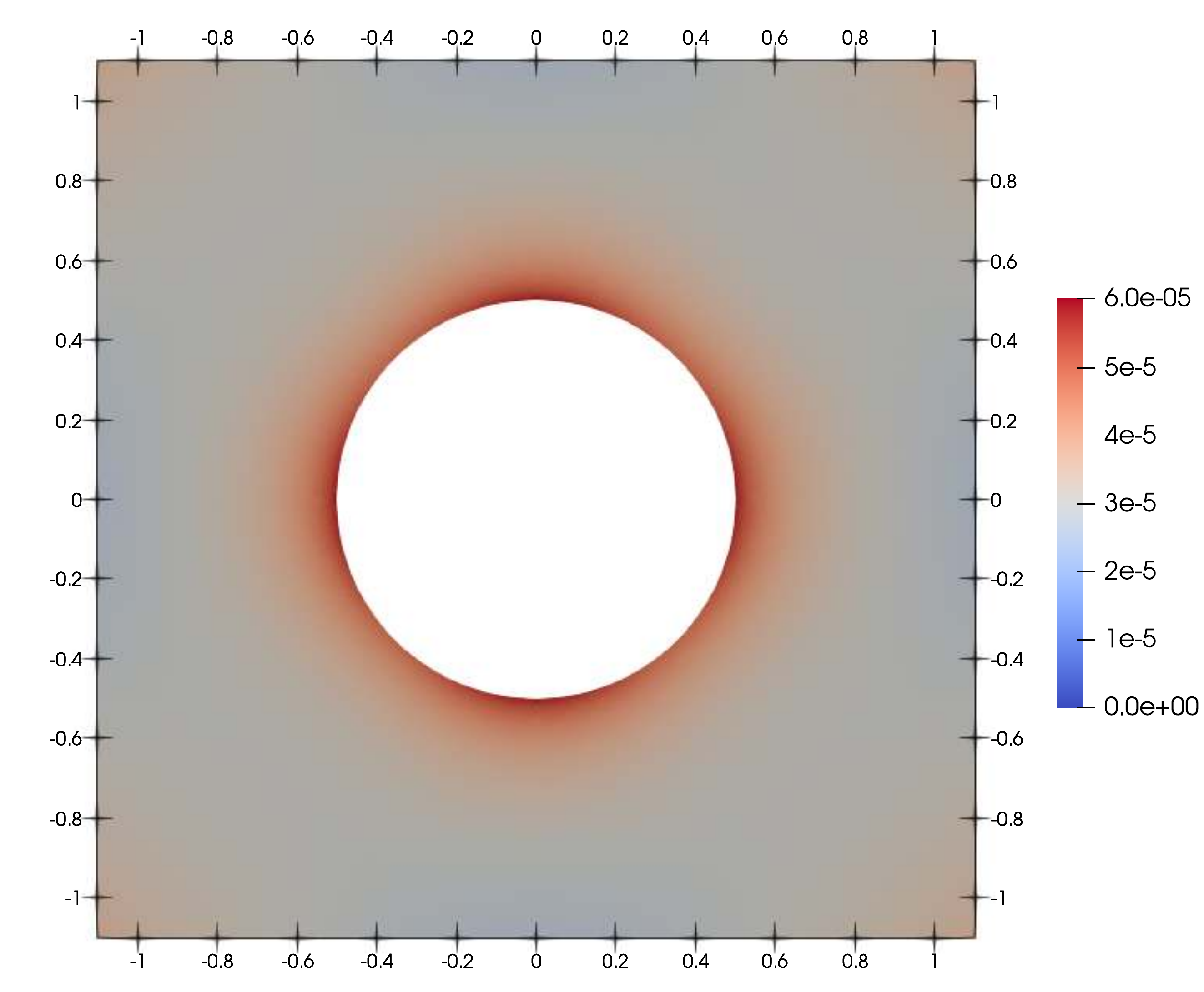}}
	\\
	\subfloat[Crack growth under tensile load.]
	{\includegraphics[width=0.43\textwidth]{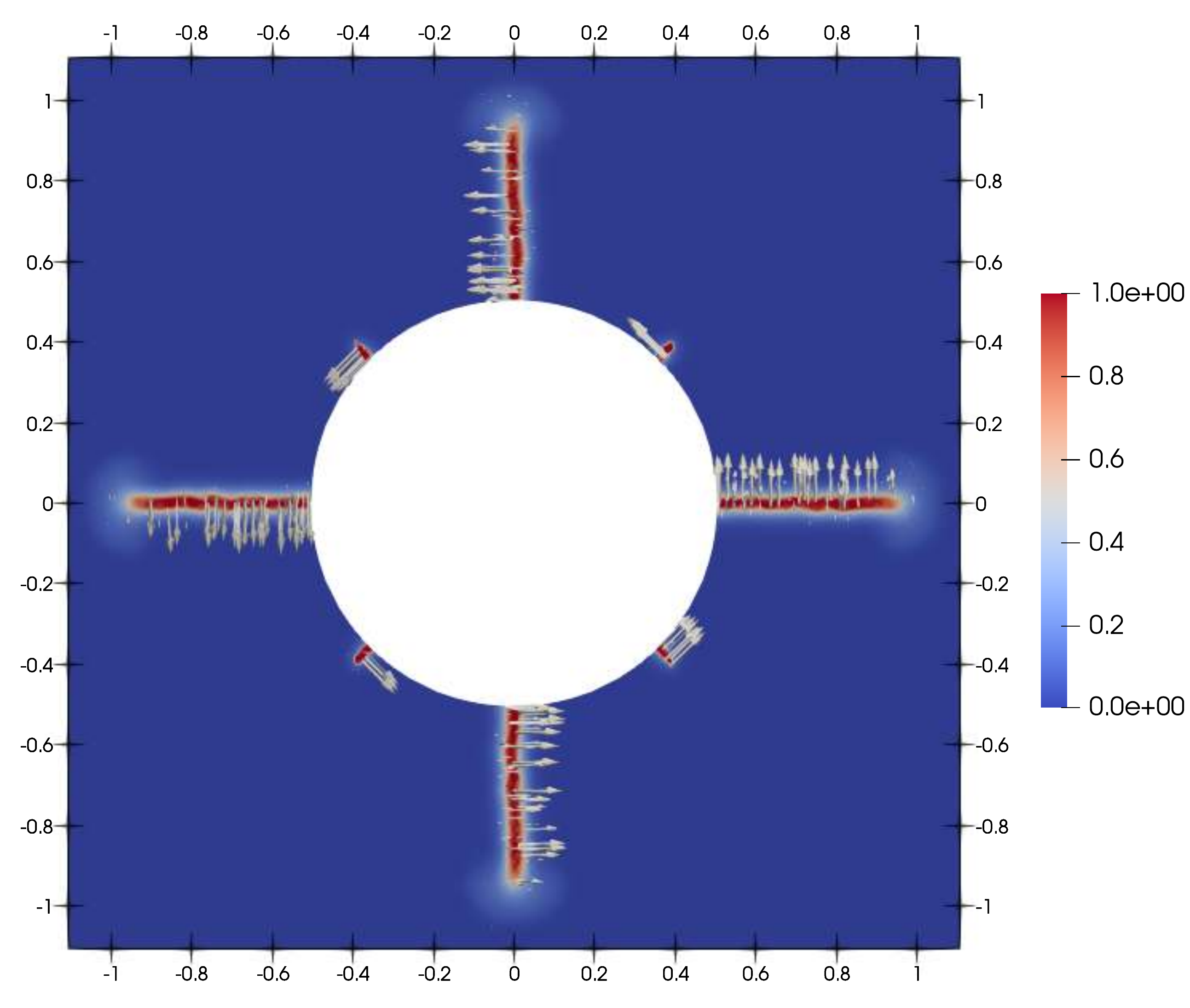}}
    \hfill
	\subfloat[Elastic energy density of the cracked configuration under tensile load.]
	{\includegraphics[width=0.43\textwidth]{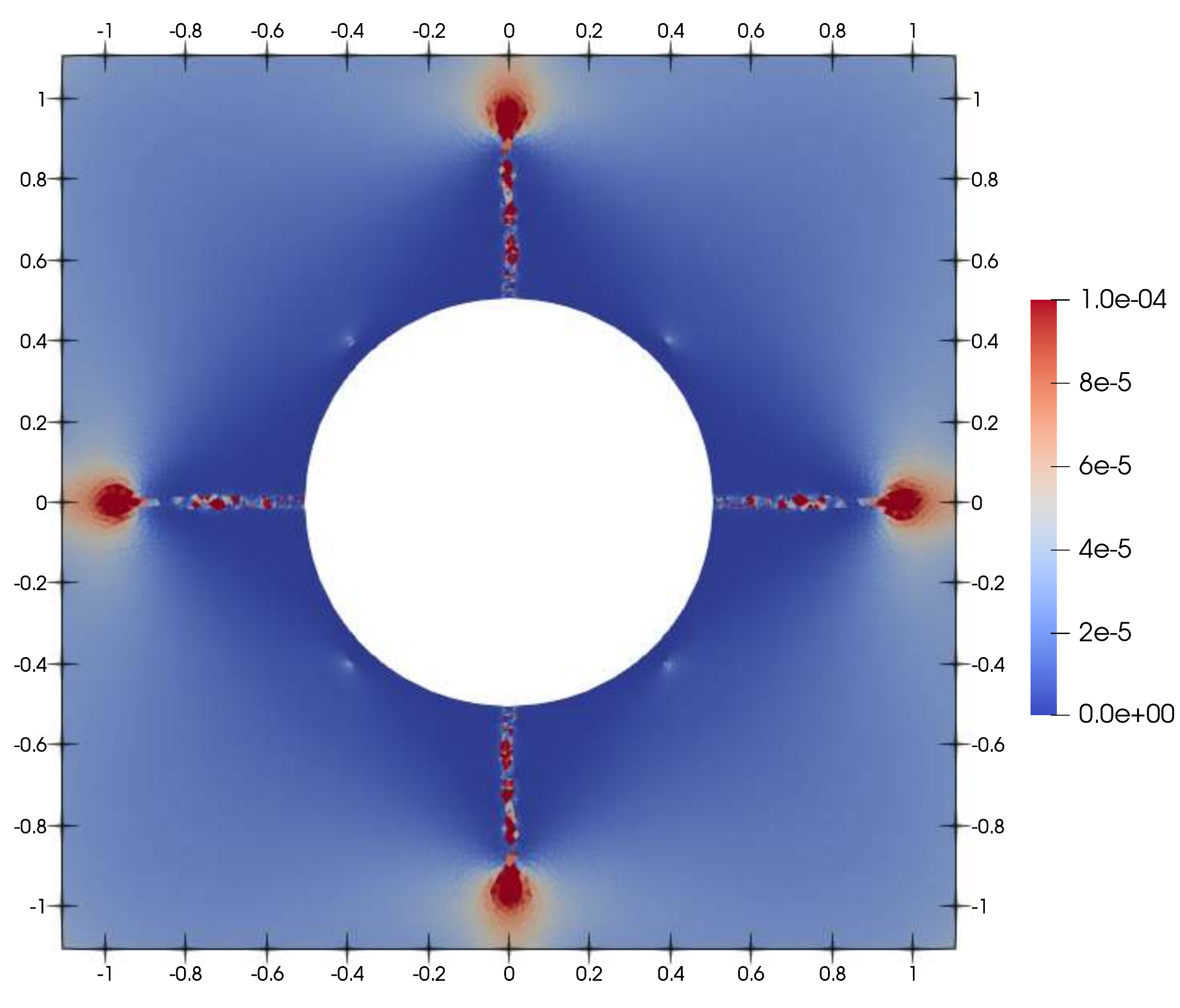}}
	\\
	\subfloat[Crack configuration under a second cycle of compressive load; crack patterns do not evolve under compression.]
	{\includegraphics[width=0.42\textwidth]{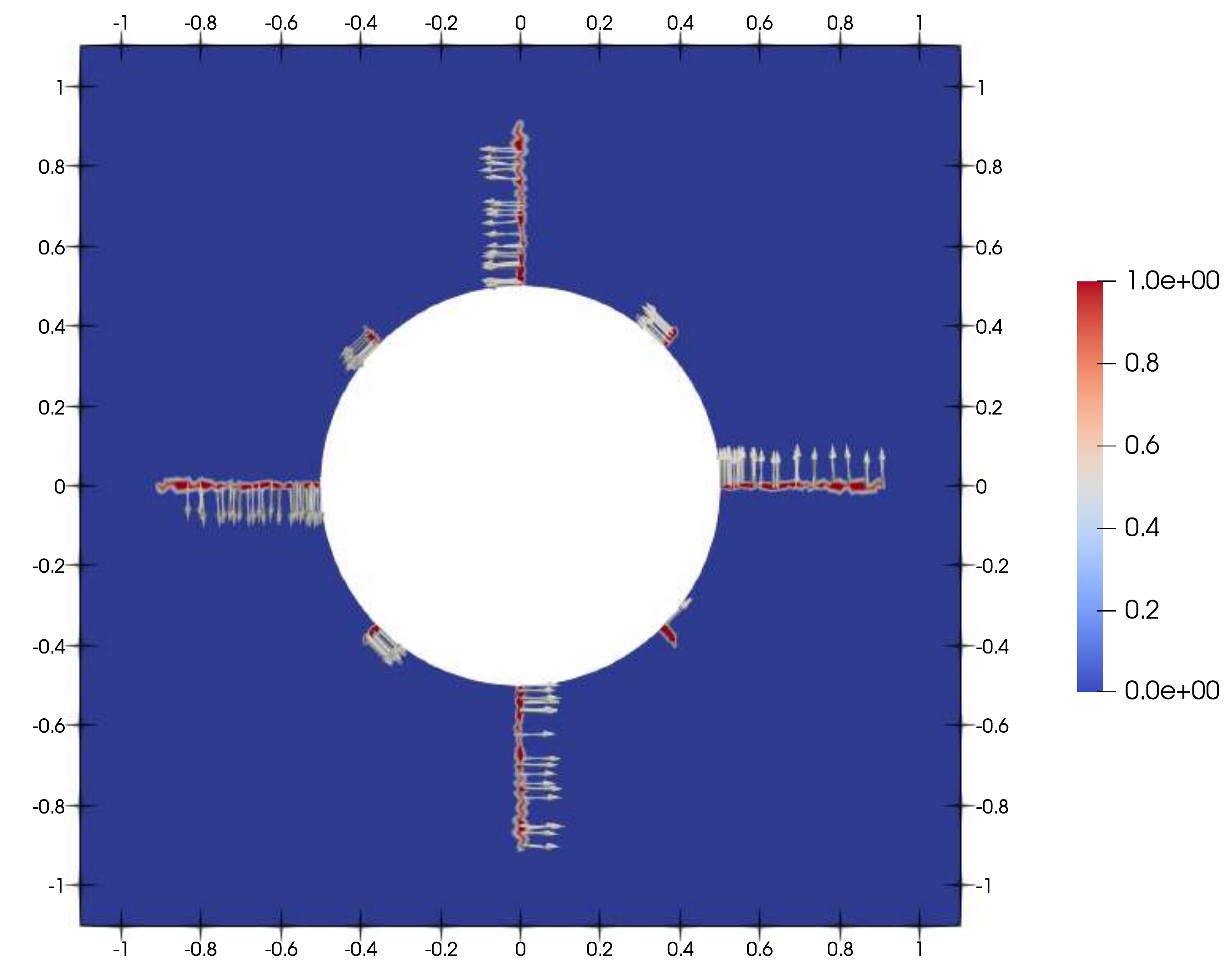}}
    \hfill
	\subfloat[Elastic energy density of the cracked configuration under compressive load; response is identical to the intact initial configuration.]
	{\includegraphics[width=0.42\textwidth]{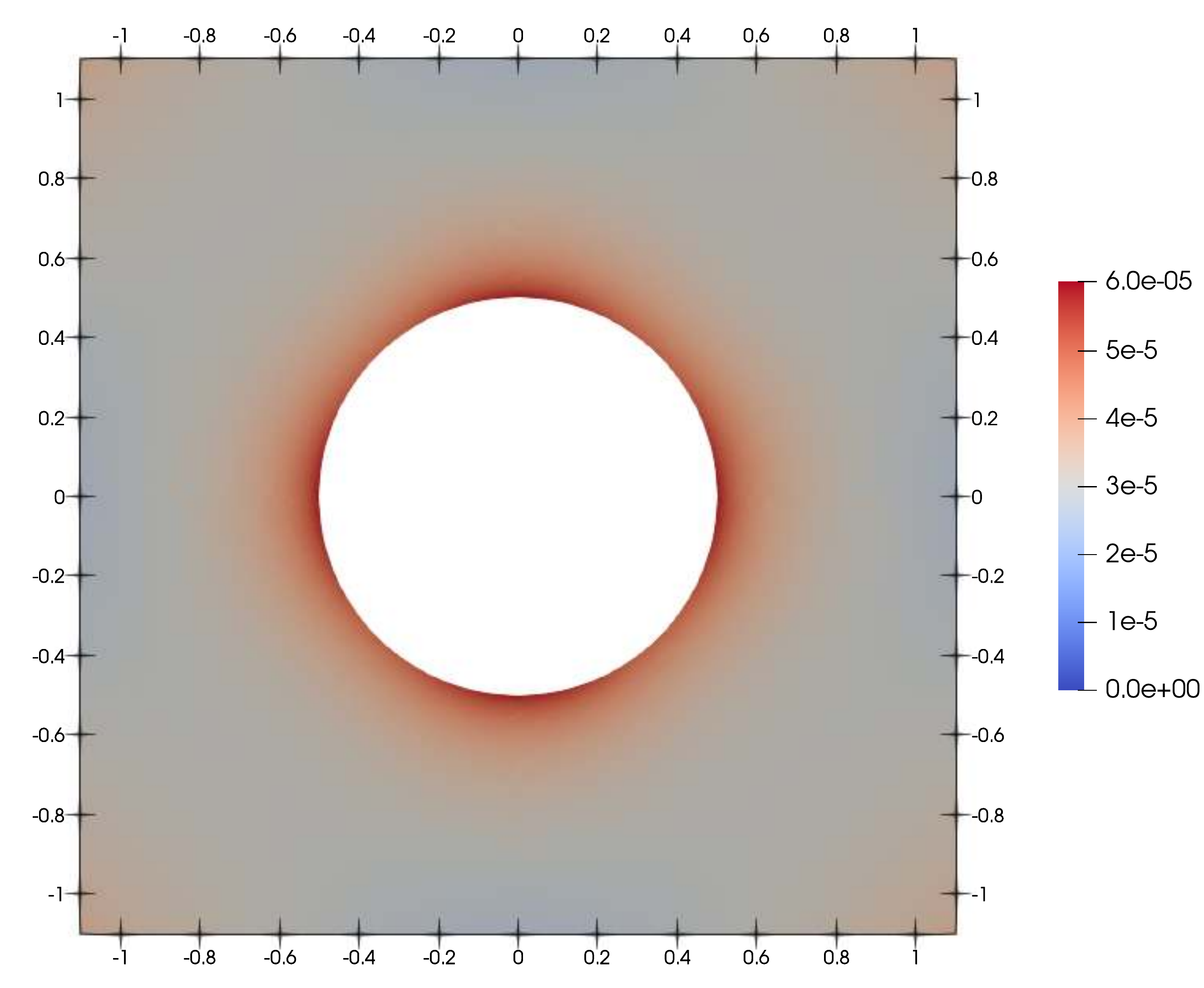}}
	\caption{Crack growth in a circular cavity under cyclic remote loading.}
	\label{fig:cavity-fracture}
\end{figure}

%%%%%%%%%%%%%%%%%%%%%
%%%%%%%%%%%%%%%%%%%%%
%%%%%%%%%%%%%%%%%%%%%
%%%%%%%%%%%%%%%%%%%%%

%%%%%%%%%%%%%%%%%%%%%
%%%%%%%%%%%%%%%%%%%%%
%%%%%%%%%%%%%%%%%%%%%
%%%%%%%%%%%%%%%%%%%%%
\section{Discussion}\label{se:discussion}

In this paper, we have presented an effective energy for phase-field cracks that provides the correct crack response when the crack is subject to complex loadings that cause contact across the crack faces.
Our approach is valid in the setting of finite deformations, which enables application to soft materials in complex configurations (e.g., to complement theoretical studies such as \cite{mo2022finite,geubelle1994finite,knowles1983large}), as well as to stiff materials where strains are small but fractured pieces can have large rotations or where compression driving crack closure is significant (e.g., \cite{huang2022cohesive}).

While the numerical results presented above are promising, there remain several important directions for future research:
\begin{itemize}
    \item Quasiconvexity is a central property for well-posed problems in finite elasticity, e.g., \cite{Braides98,antman2005problems,DaFrTo05}, and it remains to check that $\Wd$ is quasiconvex in the $\bfF$ variable.
    While lack of convexity typically leads to instabilities or microstructure, our numerical computations have not shown that behavior.
    In this context, we notice that an alternative definition of $\Wd$ could have been
    \[
     \Wd(\bfF, \bfn) = \min_{\bfy = \bfF \bfx \text{ on } \partial Q_{\bfn}} \int_{Q_{\bfn}} W(\nabla\bfy) \dm V_{\bfx} ,
    \]
    where: $Q_{\bfn}$ is the cube of volume one centered at the origin with edges parallel to the orthonormal basis $\{ \bft_1, \bft_2, \bfn \}$; $\bfy$ is Sobolev in the two half-cubes $Q_{\bfn}^{\pm} = \{ \bfx \in Q_{\bfn} : \pm \bfx \cdot \bfn > 0\}$, but can have a jump in the interface $\{ \bfx \in Q_{\bfn} : \bfx \cdot \bfn = 0\}$; $\bfy$ does not interpenetrate.
    This definition renders a quasiconvex $\Wd$ and, in addition, simulates the effective response of two blocks under the affine deformation $\bfF$ applied on the boundary (Fig. \ref{fig:effective-response-2-blocks}).
    \begin{figure*}[ht!]
    	\includegraphics[width=0.87\textwidth]{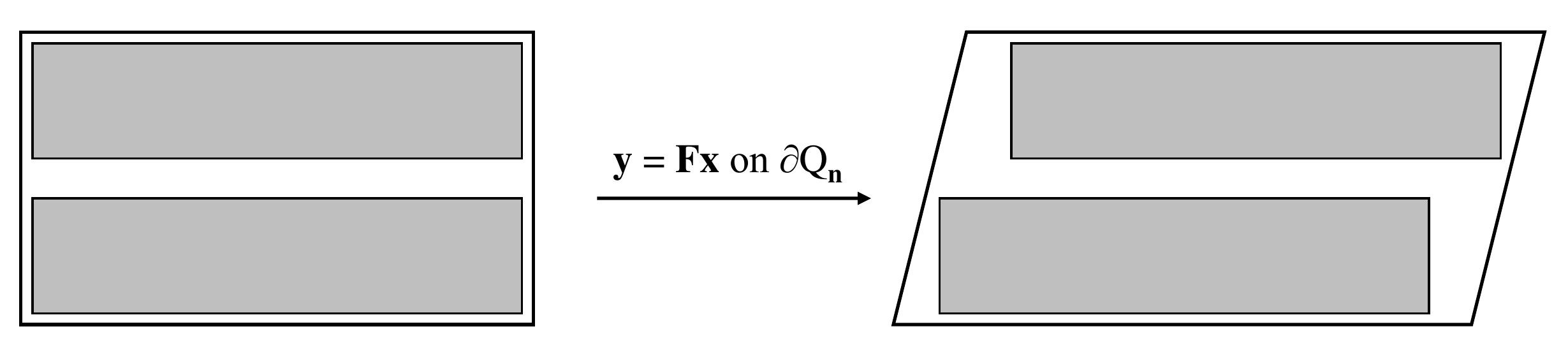}
    	\caption{Obtaining the effective crack energy through the homogenized effective response of two blocks.}
    	\label{fig:effective-response-2-blocks}
    \end{figure*}
    However, explicitly carrying out the minimization is challenging.
    
    \item The limiting sharp-interface model that appears as $\epsilon\to 0$, and the corresponding consequences on the crack face tractions, needs to be clarified. 

    \item The numerical studies on branched cracks show that the crack normal is not well defined at the branch point, which leads to undesired responses at the branch point when the crack is subject to complex loads. While it is not immediately clear to us as to how to eliminate this, possibly it is useful to borrow from damage theory in using a tensorial parameter \cite{lemaitre1994mechanics} that can potentially represent a richer kinematics such as multiple normals at a spatial location.

    \item We have considered the setting of idealized frictionless crack faces. A natural extension is to consider friction, which is relevant to a large class of materials. While relaxation of the energy is not applicable to this setting, the QR decomposition provides an approach to transparently separate the relative slip of the crack faces from other deformation modes. Therefore, we expect that the kinematics introduced in this paper will be useful in considering more realistic boundary conditions.
    
\end{itemize}

%%%%%%%%%%%%%%%%%%%%%
%%%%%%%%%%%%%%%%%%%%%
%%%%%%%%%%%%%%%%%%%%%
%%%%%%%%%%%%%%%%%%%%%
%%%%%%%%%%%%%%%%%%%%%
%%%%%%%%%%%%%%%%%%%%%
%%%%%%%%%%%%%%%%%%%%%
%%%%%%%%%%%%%%%%%%%%%
\section*{Software Availability}

A version of the code developed for this work is available at  \\
\url{https://github.com/maryhzd/Phase-field.git}

\begin{acknowledgments}
    We thank George Gazonas, Anthony Rollett, and Noel Walkington for useful discussions;
    Pradeep Sharma for pointing us to relevant prior work in continuum damage mechanics;
    Agencia Estatal de Investigaci\'on of the Spanish Ministry Research and Innovation (Project MTM2017-85934-C3-2-P), Army Research Office (MURI W911NF-19-1-0245), Office of Naval Research (N00014-18-1-2528), and National Science Foundation (CMMI MOMS 1635407, DMREF 2118945, DMS 2108784) for support; 
    Center for Nonlinear Analysis for hosting a visit by Carlos Mora-Corral;
    and National Science Foundation for XSEDE resources provided by Pittsburgh Supercomputing Center.
    This paper draws from the doctoral dissertation of Vaibhav Agrawal at Carnegie Mellon University.
\end{acknowledgments}

%%%%%%%%%%%%%%%%%%%%%
%%%%%%%%%%%%%%%%%%%%%
%%%%%%%%%%%%%%%%%%%%%
%%%%%%%%%%%%%%%%%%%%%
\appendix

\makeatletter
\renewcommand*{\thesection}{\Alph{section}}
\renewcommand*{\thesubsection}{\thesection.\arabic{subsection}}
\renewcommand*{\p@subsection}{}
\renewcommand*{\thesubsubsection}{\thesubsection.\arabic{subsubsection}}
\renewcommand*{\p@subsubsection}{}
\makeatother

%%%%%%%%%%%%%%%%%%%%%
%%%%%%%%%%%%%%%%%%%%%
%%%%%%%%%%%%%%%%%%%%%
%%%%%%%%%%%%%%%%%%%%%
\section{Shortcomings of the Energy Splitting Method}
\label{sec:Vaibhav-comparison}

The simplest approach to model the energy density of the damaged volume is to set the damaged energy to $0$.
However, this leads to unphysical behavior such as interpenetration of the crack faces and an incorrect mechanical response of a body with a crack.
The papers \cite{miehe-ijnme} and \cite{amor-jmps} proposed different ways of decomposing the energy, and associating only certain terms of the energy to the damaged region.
While the energy splitting method vastly improved the difficulties caused by simply using zero energy for the crack, it does not consider the crack orientation in the energetic decomposition.
Consequently, energy splitting leads to unphysical response in some settings; further, it is unclear how to extend it beyond the setting of linear isotropic elasticity.
We discuss below some simple examples where the splitting methods provide unexpected results.
We emphasize that the incorrect stress field at crack tips can affect the crack growth behavior significantly, even if the stresses are largely correct away from the crack tip.

In all examples, we consider a large specimen with a given far-field stress or strain, and a finite crack oriented with normal $\bfe_2$.
We use $\bfsigma_{split}$ to denote the response of the cracked material predicted by the energy splitting model. 

We define the intact energy and the corresponding stress by
\begin{equation}
     W_{intact}(\bfvareps) = \half\lambda (\tr \bfvareps)^2 + \mu |\bfvareps|^2
     \quad \Rightarrow \quad
     \bfsigma_{intact} = \lambda\left(\trace{\bfvareps}\right)\bfI + 2\mu\bfvareps .
\end{equation}
A value $\mu>0$ is required for convexity of $W_{intact}$.
For simplicity we consider $\lambda>0$, and consequently the Poisson ratio $\nu$ is positive.

\subsection{Splitting Based on the Principal Strain}\label{subse:splitting}

Following \cite{miehe-ijnme}, the compressive ($\psi_0^-$) and tensile ($\psi_0^+$) energies are defined by
\begin{equation*}
    \psi_0^{\pm} 
        := 
        \half\lambda\langle\eps_1 + \eps_2 +\eps_3 \rangle_\pm^2
        + \mu \langle\eps_1\rangle_\pm^2+\langle\eps_2\rangle_\pm^2+\langle\eps_3\rangle_\pm^2 ,
\end{equation*}
where $\eps_1, \eps_2, \eps_3$ denote the principal strains, and the corresponding principal directions are $\bfn_1, \bfn_2, \bfn_3$.
Also, $\langle x \rangle_{+} = \max \{0, x\}$ and $\langle x \rangle_{-} = \min \{0, x\}$.
The effective energy of the crack is assumed to consist of only the compressive part $\psi_0^-$.
The corresponding stress is
\begin{equation}
    \bfsigma_{split} 
    = 
        \sum_{a=1}^3 
            \left(\lambda\langle\eps_1+\eps_2+\eps_3\rangle_{-} + 2\mu\langle\eps_a\rangle_{-}\right)
            \bfn_a\otimes\bfn_a .
\end{equation}
We notice from this expression that $\bfsigma_{split}$ is always non-tensile in every direction.

\paragraph{Uniaxial tension parallel to the crack.}

Consider the specimen subject to a far-field stress $\sigma_0 \bfe_1 \otimes \bfe_1$ with $\sigma_0 > 0$, giving a far-field strain of:
\begin{equation}
\label{eqn:split-ex-1}
    \bfvareps = \frac{\sigma_0}{\mu(3\lambda+2\mu)}
        \begin{pmatrix}
            \lambda+\mu & 0 & 0
            \\
            0 & -\lambda/2 & 0
            \\
            0 & 0 & -\lambda/2
        \end{pmatrix} .
\end{equation}

The corresponding stress response in the crack in this model is $\bfsigma_{split} = -\frac{\sigma_0}{3\lambda+2\mu} \left(\bfe_2\otimes\bfe_2+\bfe_3\otimes\bfe_3\right)$.

We emphasize two aspects of this result.
First, there is an unexpected (since $\nu>0$) compressive stress along the $\bfe_2$ direction.
Second, the stress along the $\bfe_1$ direction is zero, while the expectation is that for far-field stress parallel to crack, the material should behave as an intact material and sustain the far-field stress as it is. 
The incorrect stress field can potentially cause spurious crack growth: the effective zero stiffness for tension parallel to cracks can lead to spurious stress concentration around the tip as the intact material ahead of the tip can sustain far-field stress in $\bfe_1$ direction but the cracked phase cannot. 
Further, this can have a significant spurious impact on the T-stress which is an important parameter for ductile crack growth \cite{ravindran2021fracture}.

\paragraph{Shear traction across the crack.}

Consider the specimen subject to a far-field strain given by
\begin{equation*}
    \bfvareps 
    = 
    \frac{\tau}{2\mu}
    \begin{pmatrix}
        0 & 1 & 0\\
        1 & 0 & 0 \\
        0 & 0 & 0
    \end{pmatrix}
\end{equation*}
with $\tau>0$.
The corresponding stress response in the crack in this model is
\begin{equation}
    \bfsigma_{split}
    = 
    \frac{\tau}{2}
    \begin{pmatrix}
        -1 & 1 & 0\\
        1  & -1 & 0 \\
        0 & 0 & 0
    \end{pmatrix}
\end{equation}
which predicts a shear traction across the crack faces, which violates the classical crack face traction conditions.

%%%%%%%%%%%%%%%%%%%%%%%%%%%%%%
%%%%%%%%%%%%%%%%%%%%%%%%%%%%%%
%%%%%%%%%%%%%%%%%%%%%%%%%%%%%%
%%%%%%%%%%%%%%%%%%%%%%%%%%%%%%
\subsection{The Hydrostatic-Deviatoric Split}

\cite{amor-jmps} proposes a splitting that allows the crack to resist compressive hydrostatic stress, but not tensile hydrostatic and deviatoric stresses.
One starts by writing the isotropic linear elastic energy as
\begin{equation*}
    W_{intact}(\bfvareps) = \half\kappa\langle\trace\bfvareps\rangle_-^2 + \half\kappa\langle\trace\bfvareps\rangle_+^2 + \mu|\bfvareps_D|^2
\end{equation*}
where $\kappa = \lambda+2\mu/3 > 0$ is the bulk modulus and $\bfvareps_D := \bfvareps - \frac{1}{3}\left(\trace{\bfvareps}\right)\bfI$ is the deviatoric component of the strain.
The effective energy of the crack is assumed to consist only of the term $\half\kappa\langle\trace\bfvareps\rangle_-^2$, giving the stress reponse
\begin{equation*}
    \bfsigma_{split} = \left(\kappa\langle\trace\bfvareps\rangle_-\right)\bfI .
\end{equation*}
We notice from this expression that the crack response is always non-tensile hydrostatic, regardless of the applied loading.

\paragraph{Uniaxial tension parallel to the crack.}

Consider the specimen subject to a far-field stress $\sigma_0 \bfe_1 \otimes \bfe_1$ with $\sigma_0 > 0$, and the corresponding far-field strain is in \eqref{eqn:split-ex-1}.

In this model, we find $\bfsigma_{split} = {\bf 0}$. 
As described in Section \ref{subse:splitting}, the expectation is that material sustains the far-field tensile stress parallel to crack faces as it is. 
Further, the result that $\bfsigma_{split} = \bf0$ has similar implications: spurious stress concentrations at the crack tip that affects the crack driving force and the T-stress.

\paragraph{Uniaxial compression normal to the crack.}

Consider the specimen subject to a far-field stress $- \sigma_0 \bfe_2 \otimes \bfe_2$ with $\sigma_0 > 0$.
We expect the crack faces to contact and the response to be identical to the intact material.

The far-field strain corresponding to this stress is
\begin{equation*}
    \bfvareps = 
    \frac{\sigma_0}{\mu(3\lambda+2\mu)}
    \begin{pmatrix}
        \lambda/2 & 0 & 0 \\
        0 & -(\lambda+\mu) & 0 \\
        0 & 0 & \lambda/2
    \end{pmatrix} ,
\end{equation*}
which should also be response of the crack.
However, this model predicts the crack response $\bfsigma_{split} = -\frac{\sigma_0}{3}\bfI$.
Similar results can be obtained in the general case of multiaxial compressive loading.

%%%%%%%%%%%%%%%%%%%%%
%%%%%%%%%%%%%%%%%%%%%

\section{Proofs}\label{ap:proofs}

In this appendix, we provide the proofs of various statements in the main body of the work.

\begin{proof}[Proof of Proposition \ref{prop:QR}]
This is actually a restatement of the well-known QR decomposition (\cite[Th.\ 2.6.1]{HoJo90} or \cite[Thms.\ 7.1 and 7.2]{TrBa97}), according to which given any  $\bfF \in \R^{3 \times 3}_+$ there exist unique $\bfR \in SO(3)$ and $\bfA$ upper triangular (with respect to the basis $\{ \bft_1, \bft_2, \bfn\}$) with positive diagonal elements such that $\bfF = \bfR \bfA$.
This $\bfA$ must be of the form $\bfA_{\bft_1, \bft_2, \bfn} (A_{\bfn \bfn}, A_{\bft_1 \bft_1}, A_{\bft_2 \bft_2}, A_{\bft_1 \bfn}, A_{\bft_2 \bfn}, A_{\bft_1 \bft_2})$ for some $A_{\bfn \bfn} , A_{\bft_1 \bft_1}, A_{\bft_2 \bft_2} > 0$ and $A_{\bft_1 \bfn}, A_{\bft_2 \bfn}, A_{\bft_1 \bft_2} \in \R$.
\end{proof}

\begin{proof}[Proof of Lemma \ref{le:Ann}]
We have, succesively,
\[
 \bfF = \bfR \bfA , \qquad \bfF^{-T} = \bfR \bfA^{-T} , \qquad \bfF^{-T} \bfn = \bfR \bfA^{-T} \bfn , \qquad \left| \bfF^{-T} \bfn \right| = \left| \bfA^{-T} \bfn \right| .
\]
Now, since
\[
 \bfA = A_{\bfn \bfn} \bfn \otimes \bfn + A_{\bft_1 \bft_1} \bft_1 \otimes \bft_1 + A_{\bft_2 \bft_2} \bft_2 \otimes \bft_2 + A_{\bft_1 \bfn} \bft_1 \otimes \bfn + A_{\bft_2 \bfn} \bft_2 \otimes \bfn + A_{\bft_1 \bft_2} \bft_1 \otimes \bft_2 ,
\]
we can easily calculate
\[
 \bfA^T = A_{\bfn \bfn} \bfn \otimes \bfn + A_{\bft_1 \bft_1} \bft_1 \otimes \bft_1 + A_{\bft_2 \bft_2} \bft_2 \otimes \bft_2 + A_{\bft_1 \bfn} \bfn \otimes \bft_1 + A_{\bft_2 \bfn} \bfn \otimes \bft_2 + A_{\bft_1 \bft_2} \bft_2 \otimes \bft_1 ,
\]
as well as
\begin{align*}
 \bfA^{-T} = & \frac{1}{A_{\bfn \bfn}} \bfn \otimes \bfn + \frac{1}{A_{\bft_1 \bft_1}} \bft_1 \otimes \bft_1 + \frac{1}{A_{\bft_2 \bft_2}} \bft_2 \otimes \bft_2 + \left( - \frac{A_{\bft_1 \bfn}}{A_{\bft_1 \bft_1} A_{\bfn \bfn}} + \frac{A_{\bft_1 \bft_2} A_{\bft_2 \bfn}}{A_{\bft_1 \bft_1} A_{\bft_2 \bft_2} A_{\bfn \bfn}} \right) \bfn \otimes \bft_1 \\
 & - \frac{A_{\bft_2 \bfn}}{A_{\bft_2 \bft_2} A_{\bfn \bfn}} \bfn \otimes \bft_2 - \frac{A_{\bft_1 \bft_2}}{A_{\bft_1 \bft_1} A_{\bft_2 \bft_2}}  \bft_2 \otimes \bft_1 ,
\end{align*}
so
\[
 \bfA^{-T} \bfn = \frac{1}{A_{\bfn \bfn}} \bfn \quad \text{and} \quad  \left| \bfA^{-T} \bfn \right| = \frac{1}{A_{\bfn \bfn}} ,
\]
which concludes the proof.
\end{proof}

\begin{proof}[Proof of Proposition \ref{pr:alpha*}]
We will only prove \ref{item:alpha*}, the rest of the claims being analogous.
Let $( A_{\bfn \bfn}^{(j)} , A_{\bft_1 \bfn}^{(j)} , A_{\bft_2 \bfn}^{(j)} )$ be a sequence (indexed by $j \in \N$) in $(0, \infty) \times \R \times \R$ satisfying
\begin{align*}
 & \inf_{\substack{A_{\bfn \bfn} >0 \\ A_{\bft_1 \bfn}, A_{\bft_2 \bfn} \in \R}} W \left( \bfA_{\bft_1, \bft_2, \bfn} (A_{\bfn \bfn}, A_{\bft_1 \bft_1}, A_{\bft_2 \bft_2}, A_{\bft_1 \bfn}, A_{\bft_2 \bfn}, A_{\bft_1 \bft_2}) \right) \\
 & = \lim_{j \to \infty} W \left( \bfA_{\bft_1, \bft_2, \bfn} (A_{\bfn \bfn}^{(j)}, A_{\bft_1 \bft_1}, A_{\bft_2 \bft_2}, A_{\bft_1 \bfn}^{(j)}, A_{\bft_2 \bfn}^{(j)}, A_{\bft_1 \bft_2}) \right) .
\end{align*}
We have that
\[
 \det \bfA_{\bft_1, \bft_2, \bfn} (A_{\bfn \bfn}^{(j)}, A_{\bft_1 \bft_1}, A_{\bft_2 \bft_2}, A_{\bft_1 \bfn}^{(j)}, A_{\bft_2 \bfn}^{(j)}, A_{\bft_1 \bft_2}) = A_{\bft_1 \bft_1} A_{\bft_2 \bft_2} A_{\bfn \bfn}^{(j)}
\]
and
\begin{align*}
 &\left| \bfA_{\bft_1, \bft_2, \bfn} (A_{\bfn \bfn}^{(j)}, A_{\bft_1 \bft_1}, A_{\bft_2 \bft_2}, A_{\bft_1 \bfn}^{(j)}, A_{\bft_2 \bfn}^{(j)}, A_{\bft_1 \bft_2}) \right| \\
 & = \sqrt{A_{\bft_1 \bft_1}^2 + A_{\bft_1 \bft_2}^2 + (A_{\bft_1 \bfn}^{(j)})^2 + A_{\bft_2 \bft_2}^2 + (A_{\bft_2 \bfn}^{(j)})^2 + (A_{\bfn \bfn}^{(j)})^2} .
\end{align*}
Condition \eqref{eq:Winfty} implies that there exist $m, M >0$ such that
\[
 m \leq A_{\bfn \bfn}^{(j)} \leq M , \quad -M \leq A_{\bft_1 \bfn}^{(j)} \leq M , \quad -M \leq A_{\bft_2 \bfn}^{(j)} \leq M , \qquad j \in \N .
\]
Therefore, there exist $A_{\bfn \bfn}^* , A_{\bft_1 \bfn}^* , A_{\bft_2 \bfn}^*$ with
\[
 m \leq A_{\bfn \bfn}^* \leq M , \quad -M \leq A_{\bft_1 \bfn}^* \leq M , \quad -M \leq A_{\bft_2 \bfn}^* \leq M
\]
such that, for a subsequence (not relabelled),
\[
 \lim_{j \to \infty} ( A_{\bfn \bfn}^{(j)} , A_{\bft_1 \bfn}^{(j)} , A_{\bft_2 \bfn}^{(j)} ) = ( A_{\bfn \bfn}^* , A_{\bft_1 \bfn}^* , A_{\bft_2 \bfn}^* ) .
\]
As $W$ is continuous,
\begin{align*}
 & \lim_{j \to \infty} W \left( \bfA_{\bft_1, \bft_2, \bfn} (A_{\bfn \bfn}^{(j)}, A_{\bft_1 \bft_1}, A_{\bft_2 \bft_2}, A_{\bft_1 \bfn}^{(j)}, A_{\bft_2 \bfn}^{(j)}, A_{\bft_1 \bft_2}) \right) \\
 & = W \left( \bfA_{\bft_1, \bft_2, \bfn} (A_{\bfn \bfn}^*, A_{\bft_1 \bft_1}, A_{\bft_2 \bft_2}, A_{\bft_1 \bfn}^*, A_{\bft_2 \bfn}^*, A_{\bft_1 \bft_2}) \right)
\end{align*}
and the proof is complete.
\end{proof}

\begin{proof}[Proof of Proposition \ref{pr:Wdalt}]
Following the notation of Definition \ref{de:Wd}, we define, additionally,
\[
 \bfA' = \bfA_{\bft_1, \bft_2, \bfn} (A'_{\bfn \bfn}, A_{\bft_1 \bft_1}, A_{\bft_2 \bft_2}, A'_{\bft_1 \bfn}, A'_{\bft_2 \bfn}, A_{\bft_1 \bft_2}) .
\]
By frame-indifference, $W (\bfA') = W (\bfR \bfA')$, and $\bfR \bfA' = \bfR \bfA \bfA^{-1} \bfA' = \bfF \bfA^{-1} \bfA'$.
We calculate
\[
 \bfA^{-1} = \bfA_{\bft_1, \bft_2, \bfn} \left( \frac{1}{A_{\bfn \bfn}}, \frac{1}{A_{\bft_1 \bft_1}}, \frac{1}{A_{\bft_2 \bft_2}}, \frac{-A_{\bft_1 \bfn}}{A_{\bft_1 \bft_1} A_{\bfn \bfn}} + \frac{A_{\bft_1 \bft_2} A_{\bft_2 \bfn}}{A_{\bft_1 \bft_1} A_{\bft_2 \bft_2} A_{\bfn \bfn}}, - \frac{A_{\bft_2 \bfn}}{A_{\bft_2 \bft_2} A_{\bfn \bfn}}, - \frac{A_{\bft_1 \bft_2}}{A_{\bft_1 \bft_1} A_{\bft_2 \bft_2}} \right) .
\]
and
\begin{multline*}
 \bfA^{-1} \bfA' = \\
 \bfA_{\bft_1, \bft_2, \bfn} \left( \frac{A'_{\bfn \bfn}}{A_{\bfn \bfn}}, 1, 1, \frac{A'_{\bft_1 \bfn}}{A_{\bft_1 \bft_1}} - \frac{A_{\bft_1 \bft_2} A'_{\bft_2 \bfn}}{A_{\bft_1 \bft_1} A_{\bft_2 \bft_2}} - \frac{A_{\bft_1 \bfn} A'_{\bfn \bfn}}{A_{\bft_1 \bft_1} A_{\bfn \bfn}} + \frac{A_{\bft_1 \bft_2} A_{\bft_2 \bfn} A'_{\bfn \bfn}}{A_{\bft_1 \bft_1} A_{\bft_2 \bft_2} A_{\bfn \bfn}}, \frac{A'_{\bft_2 \bfn}}{A_{\bft_2 \bft_2}} - \frac{A_{\bft_2 \bft_3} A'_{\bfn \bfn}}{A_{\bft_2 \bft_2} A_{\bfn \bfn}}, 0 \right) .
\end{multline*}
Performing the changes
\begin{align*}
 & A''_{\bft_1 \bfn} := \frac{A'_{\bft_1 \bfn}}{A_{\bft_1 \bft_1}} - \frac{A_{\bft_1 \bft_2} A'_{\bft_2 \bfn}}{A_{\bft_1 \bft_1} A_{\bft_2 \bft_2}} - \frac{A_{\bft_1 \bfn} A'_{\bfn \bfn}}{A_{\bft_1 \bft_1} A_{\bfn \bfn}} + \frac{A_{\bft_1 \bft_2} A_{\bft_2 \bfn} A'_{\bfn \bfn}}{A_{\bft_1 \bft_1} A_{\bft_2 \bft_2} A_{\bfn \bfn}} , \qquad A''_{\bft_1 \bfn} := \frac{A'_{\bft_2 \bfn}}{A_{\bft_2 \bft_2}} - \frac{A_{\bft_2 \bft_3} A'_{\bfn \bfn}}{A_{\bft_2 \bft_2} A_{\bfn \bfn}} , \\
 & A''_{\bfn \bfn} := \frac{A'_{\bfn \bfn}}{A_{\bfn \bfn}} ,
\end{align*}
it is immediate to see that
\begin{align*}
 & \Big\{ \bfA_{\bft_1, \bft_2, \bfn} \left( \frac{A'_{\bfn \bfn}}{A_{\bfn \bfn}}, 1, 1, \frac{A'_{\bft_1 \bfn}}{A_{\bft_1 \bft_1}} - \frac{A_{\bft_1 \bft_2} A'_{\bft_2 \bfn}}{A_{\bft_1 \bft_1} A_{\bft_2 \bft_2}} + \frac{(-A_{\bft_1 \bfn} A_{\bft_2 \bft_2} + A_{\bft_1 \bft_2} A_{\bft_2 \bfn}) A'_{\bfn \bfn}}{A_{\bft_1 \bft_1} A_{\bft_2 \bft_2} A_{\bfn \bfn}}, \frac{A'_{\bft_2 \bfn}}{A_{\bft_2 \bft_2}} - \frac{A_{\bft_2 \bft_3} A'_{\bfn \bfn}}{A_{\bft_2 \bft_2} A_{\bfn \bfn}}, 0 \right) \\
 & \hspace{30em} : \, A'_{\bfn \bfn} > 0, \, A'_{\bft_1 \bfn} , A'_{\bft_2 \bfn} \in \R \Big\} \\
 & =  \left\{ \bfA_{\bft_1, \bft_2, \bfn} \left( A''_{\bfn \bfn}, 1, 1, A''_{\bft_1 \bfn}, A''_{\bft_2 \bfn}, 0 \right) : \, A''_{\bfn \bfn} > 0, \, A''_{\bft_1 \bfn} , A''_{\bft_2 \bfn} \in \R \right\} .
\end{align*}
This shows that
\begin{align*}
 & \min_{\substack{A'_{\bfn \bfn} >0 \\ A'_{\bft_1 \bfn}, A'_{\bft_2 \bfn} \in \R}} W \left( \bfA_{\bft_1, \bft_2, \bfn} (A'_{\bfn \bfn}, A_{\bft_1 \bft_1}, A_{\bft_2 \bft_2}, A'_{\bft_1 \bfn}, A'_{\bft_2 \bfn}, A_{\bft_1 \bft_2}) \right) \\
 & = \min_{\substack{A''_{\bfn \bfn} >0 \\ A''_{\bft_1 \bfn}, A''_{\bft_2 \bfn} \in \R}} W \left( \bfF \bfA_{\bft_1, \bft_2, \bfn} \left( A''_{\bfn \bfn}, 1, 1, A''_{\bft_1 \bfn}, A''_{\bft_2 \bfn}, 0 \right) \right)
\end{align*}
and formula \eqref{eq:Anns*} holds.
Analogously, one can show that
\begin{align*}
 & \min_{A'_{\bft_1 \bfn}, A'_{\bft_2 \bfn} \in \R} W \left( \bfA_{\bft_1, \bft_2, \bfn} (A_{\bfn \bfn}, A_{\bft_1 \bft_1}, A_{\bft_2 \bft_2}, A'_{\bft_1 \bfn}, A'_{\bft_2 \bfn}, A_{\bft_1 \bft_2}) \right) \\
 & = \min_{A''_{\bft_1 \bfn}, A''_{\bft_2 \bfn} \in \R} W \left( \bfF \bfA_{\bft_1, \bft_2, \bfn} \left( A_{\bfn \bfn}, 1, 1, A''_{\bft_1 \bfn}, A''_{\bft_2 \bfn}, 0 \right) \right) .
\end{align*}
\end{proof}

\begin{proof}[Proof of Proposition \ref{pr:invariance}]
We will use Proposition \ref{pr:Wdalt} and trace the dependence of the quantities involved.

We start with \ref{item:Wcompa2}.
According to \eqref{eq:Ann}, it is clear that $A_{\bfn \bfn}$ does not depend on $\bft_1, \bft_2$.
Let us see that $A_{\bfn \bfn}^*$ does not depend either, so let  $\{ \bft'_1, \bft'_2 , \bfn \}$ be another orthonormal basis.
We first notice that
\[
 \bft_1 \otimes \bft_1 + \bft_2 \otimes \bft_2 = \bft'_1 \otimes \bft'_1 + \bft'_2 \otimes \bft'_2 ,
\]
since both terms act as the identity in $\spn \{ \bft_1, \bft_2 \} = \spn \{ \bft'_1, \bft'_2 \}$ and as zero in $\spn\{ \bfn \}$.
On the other hand, there exist an invertible matrix
\begin{equation}\label{eq:changebasis}
 \begin{pmatrix}
 a_{11} & a_{12} \\
 a_{21} & a_{22}
 \end{pmatrix}
\end{equation}
such that
\[
 \begin{cases}
 \bft_1 = a_{11} \bft'_1 + a_{12} \bft'_2 \\
 \bft_2 = a_{21} \bft'_1 + a_{22} \bft'_2 .
 \end{cases}
\]
With this we find that for any $A''_{\bfn \bfn} > 0$ and $A''_{\bft_1 \bfn}, A''_{\bft_2 \bfn} \in \R$ we have
\begin{align*}
 & A''_{\bfn \bfn} \bfn \otimes \bfn + \bft_1 \otimes \bft_1 + \bft_2 \otimes \bft_2 + A''_{\bft_1 \bfn} \bft_1 \otimes \bfn + A''_{\bft_2 \bfn} \bft_1 \otimes \bfn \\
 & = A''_{\bfn \bfn} \bfn \otimes \bfn + \bft'_1 \otimes \bft'_1 + \bft'_2 \otimes \bft'_2 + \left( a_{11} A''_{\bft_1 \bfn} + a_{21}  A''_{\bft_2 \bfn} \right) \bft_1 \otimes \bfn + \left( a_{12} A''_{\bft_1 \bfn} + a_{22}  A''_{\bft_2 \bfn} \right) A''_{\bft_2 \bfn} \bft_1 \otimes \bfn .
\end{align*}
Thus, performing the changes
\[
 \begin{cases}
 A''_{\bft'_1 \bfn} = a_{11} A''_{\bft_1 \bfn} + a_{21}  A''_{\bft_2 \bfn} \\
 A''_{\bft'_2 \bfn} = a_{12} A''_{\bft_1 \bfn} + a_{22}  A''_{\bft_2 \bfn} ,
 \end{cases}
\]
we have shown that
\[
 \bfA_{\bft_1, \bft_2, \bfn} \left( A''_{\bfn \bfn}, 1, 1, A''_{\bft_1 \bfn}, A''_{\bft_2 \bfn}, 0 \right) = \bfA_{\bft'_1, \bft'_2, \bfn} \left( A''_{\bfn \bfn}, 1, 1, A''_{\bft'_1 \bfn}, A''_{\bft'_2 \bfn}, 0 \right) .
\]
Having in mind that the matrix \eqref{eq:changebasis} is invertible, this shows that 
\begin{align*}
 & \left\{ \bfA_{\bft_1, \bft_2, \bfn} \left( A''_{\bfn \bfn}, 1, 1, A''_{\bft_1 \bfn}, A''_{\bft_2 \bfn}, 0 \right) : \, A''_{\bfn \bfn} > 0, \, A''_{\bft_1 \bfn} , A''_{\bft_2 \bfn} \in \R \right\} \\
 & = \left\{ \bfA_{\bft'_1, \bft'_2, \bfn} \left( A''_{\bfn \bfn}, 1, 1, A''_{\bft'_1 \bfn}, A''_{\bft'_2 \bfn}, 0 \right) : \, A''_{\bfn \bfn} > 0, \, A''_{\bft'_1 \bfn} , A''_{\bft'_2 \bfn} \in \R \right\}
\end{align*}
and, analogously,
\begin{align*}
 & \left\{ \bfA_{\bft_1, \bft_2, \bfn} \left( A''_{\bfn \bfn}, 1, 1, A''_{\bft_1 \bfn}, A''_{\bft_2 \bfn}, 0 \right) : \, A''_{\bft_1 \bfn} , A''_{\bft_2 \bfn} \in \R \right\} \\
 & = \left\{ \bfA_{\bft'_1, \bft'_2, \bfn} \left( A''_{\bfn \bfn}, 1, 1, A''_{\bft'_1 \bfn}, A''_{\bft'_2 \bfn}, 0 \right) : \, A''_{\bft'_1 \bfn} , A''_{\bft'_2 \bfn} \in \R \right\} .
\end{align*}
This shows that $A_{\bfn \bfn}^*$ and $\Wd$ are the same for both basis, so proving \ref{item:Wcompa2}.

Now we show \ref{item:Wcompa3}.
Changing $\bfn$ with $-\bfn$ does not alter $A_{\bfn \bfn}$, as can be shown from \eqref{eq:Ann}.
In order to show that the rest of the quantities remain equal under this change, we first notice that $\{ -\bft_1, -\bft_2 , -\bfn \}$ is also an orthonormal basis and, for all $A_{\bfn \bfn}, A_{\bft_1 \bft_1}, A_{\bft_2 \bft_2} > 0$ and $A_{\bft_1 \bfn}, A_{\bft_2 \bfn}, A_{\bft_1 \bft_2} \in \R$,
\[
 \bfA_{\bft_1, \bft_2, \bfn} (A_{\bfn \bfn}, A_{\bft_1 \bft_1}, A_{\bft_2 \bft_2}, A_{\bft_1 \bfn}, A_{\bft_2 \bfn}, A_{\bft_1 \bft_2}) = \bfA_{-\bft_1, -\bft_2, -\bfn} (A_{\bfn \bfn}, A_{\bft_1 \bft_1}, A_{\bft_2 \bft_2}, A_{\bft_1 \bfn}, A_{\bft_2 \bfn}, A_{\bft_1 \bft_2}) .
\]
This formula, together with \eqref{eq:Anns*}, show that $A_{\bfn \bfn}^*$ is the same for $\bfn$ and $- \bfn$.
In fact, with Proposition \ref{pr:invariance} it also implies the conclusion of \ref{item:Wcompa3}.

We now show \ref{item:Wcompa4}, so let $\bfQ \in SO(3)$.
From \eqref{eq:Ann} we see immediately that $A_{\bfn \bfn}$ does not change when $\bfF$ is replaced with $\bfQ \bfF$, since $|(\bfQ \bfF)^{-T} \, \bfn | = |\bfQ^{-T} \bfF^{-T} \, \bfn | = |\bfF^{-T} \, \bfn |$.
Now, in view of \eqref{eq:Anns*} and Proposition \ref{pr:invariance}, the frame-indifference of $W$ readily implies that $A_{\bfn \bfn}^*$ and the whole $\Wd$ is the same for $\bfF$ and $\bfQ \bfF$.

In order to show \ref{item:Wcompa45}, so let $\bfQ \in \mathcal{S}$.
When the pair $(\bfF, \bfn)$ is replaced with $(\bfF \bfQ^T, \bfQ \bfn)$, the last expression of \eqref{eq:Ann} does not change, since, $|(\bfF \bfQ^T)^{-T}  \bfQ \bfn | = |\bfF^{-T} \bfQ^{-1} \bfQ \bfn | = |\bfF^{-T} \, \bfn |$.
This shows that $A_{\bfQ \bfn \, \bfQ \bfn} = A_{\bfn \bfn}$.
In order to show that the rest of the quantities remain equal under this change, we first notice that $\{ \bfQ \bft_1, \bfQ \bft_2 , \bfQ \bfn \}$ is also an orthonormal basis and, for all $A_{\bfn \bfn}, A_{\bft_1 \bft_1}, A_{\bft_2 \bft_2} > 0$ and $A_{\bft_1 \bfn}, A_{\bft_2 \bfn}, A_{\bft_1 \bft_2} \in \R$,
\begin{align*}
 & \bfF \bfA_{\bft_1, \bft_2, \bfn} (A_{\bfn \bfn}, A_{\bft_1 \bft_1}, A_{\bft_2 \bft_2}, A_{\bft_1 \bfn}, A_{\bft_2 \bfn}, A_{\bft_1 \bft_2}) \\
 & = \bfF \bfQ^T \bfA_{\bfQ \bft_1, \bfQ \bft_2, \bfQ \bfn} (A_{\bfn \bfn}, A_{\bft_1 \bft_1}, A_{\bft_2 \bft_2}, A_{\bft_1 \bfn}, A_{\bft_2 \bfn}, A_{\bft_1 \bft_2}) \bfQ
\end{align*}
and, hence, since $\bfQ$ is a symmetry for $W$,
\begin{align*}
 & W \left( \bfF \bfA_{\bft_1, \bft_2, \bfn} (A_{\bfn \bfn}, A_{\bft_1 \bft_1}, A_{\bft_2 \bft_2}, A_{\bft_1 \bfn}, A_{\bft_2 \bfn}, A_{\bft_1 \bft_2}) \right) \\
 & = W \left( \bfF  \bfQ^T \bfA_{\bfQ \bft_1, \bfQ \bft_2, \bfQ \bfn} (A_{\bfn \bfn}, A_{\bft_1 \bft_1}, A_{\bft_2 \bft_2}, A_{\bft_1 \bfn}, A_{\bft_2 \bfn}, A_{\bft_1 \bft_2}) \right) .
\end{align*}
This shows that $A_{\bfQ \bfn \, \bfQ \bfn}^* = A_{\bfn \bfn}^*$ and $\Wd (\bfF \bfQ^T, \bfQ \bfn) = \Wd (\bfF, \bfn)$, which completes the proof of \ref{item:Wcompa45}.

Finally, \ref{item:Wcompa5} is a particular case of \ref{item:Wcompa45} when $\mathcal{S} = SO(3)$.
\end{proof}

\begin{proof}[Proof of Lemma \ref{le:posdef}]
Taking $\bfvareps$ as
\[
 \vareps_{11} = 0 , \qquad \vareps_{12} = \frac{1}{2} , \qquad \vareps_{22} = 0
\]
and using $\C \bfvareps : \bfvareps >0$, we find that $c_{1212} > 0$.
Analogously, taking
\[
 \vareps_{11} = 0 , \qquad \vareps_{12} = -  \frac{c_{1222}}{2\sqrt{c_{1212}}} , \qquad \vareps_{22} = \sqrt{c_{1212}} ,
\]
we find that $c_{1212} c_{2222} - c_{1222}^2 > 0$.
\end{proof}

%%%%%%%%%%%%%%%%%%%%%
%%%%%%%%%%%%%%%%%%%%%
%%%%%%%%%%%%%%%%%%%%%
%%%%%%%%%%%%%%%%%%%%%

% References
% \bibliographystyle{siam}
% \bibliography{fracture.bib}

\end{document}